\newcommand{\ultrato}{\mathrel{\tikz[>={To}, line cap=round, anchor=base] \draw[-<] (0,0) -- (3.375mm,0);}}
\newcommand{\longultrato}{\mathrel{\tikz[>={To}, line cap=round, anchor=base] \draw[-<] (-13mm,0) -- (5mm,0);}}
\theoremstyle{plain}
\newtheorem{Thm}{Theorem}[section]
\newtheorem*{nThm}{Theorem}
\newtheorem{prop}[Thm]{Proposition}
\newtheorem{cor}[Thm]{Corollary}
\newtheorem{lem}[Thm]{Lemma}
\theoremstyle{definition}
\newtheorem{defn}[Thm]{Definition}
\newtheorem{cons}[Thm]{Construction}
\newtheorem{nota}[Thm]{Notation}
\newtheorem*{nnota}{Notation}
\newtheorem{ex}[Thm]{Example}
\newtheorem{rmk}[Thm]{Remark}
\newtheorem*{nrmk}{Remark}
\newtheorem*{nclaim}{Claim}
\newtheorem{question}[Thm]{Question}
\DeclareMathOperator{\dom}{dom}
\DeclareMathOperator{\coeq}{coeq}
\DeclareMathOperator*{\Nat}{Nat}
\DeclareMathOperator*{\colim}{colim}
\newcommand{\bbT}{\mathbb{T}}
\newcommand{\bbN}{\mathbb{N}}
\newcommand{\C}{\mathscr{C}}
\newcommand{\E}{\mathscr{E}}
\newcommand{\F}{\mathscr{F}}
\newcommand{\M}{\mathscr{M}}
\newcommand{\N}{\mathscr{N}}
\newcommand{\K}{\mathscr{K}}
\newcommand{\eF}{\EuScript{F}}
\newcommand{\eI}{\EuScript{I}}
\newcommand{\eZ}{\EuScript{Z}}
\newcommand{\eX}{\EuScript{X}}
\newcommand{\eY}{\EuScript{Y}}
\newcommand{\ptuf}{\star}
\newcommand{\bb}{\bbbeta}
\newcommand{\cleq}{\preccurlyeq}
\newcommand{\cgeq}{\succcurlyeq}
\newcommand{\Sh}{\mathsf{sh}}
\newcommand{\vSh}{\mathfrak{sh}}
\newcommand{\Desc}{\mathsf{Desc}}
\newcommand{\GTop}{\mathsf{GTop}}
\newcommand{\CohTop}{\mathsf{CohTop}}
\newcommand{\vUlt}{\mathsf{vUlt}}
\newcommand{\vUltb}{\mathsf{vUlt}_{\mathrm{bounded}}}
\newcommand{\UMat}{\mathsf{UMat}}
\newcommand{\op}{\mathrm{op}}
\newcommand{\amp}{\mathsf{amp}}
\newcommand{\In}{\mathsf{in}}
\newcommand{\Cat}{\mathsf{Cat}}
\newcommand{\CAT}{\mathsf{CAT}}
\newcommand{\TopSp}{\mathsf{TopSp}}
\newcommand{\Ult}{\mathsf{Ult}}
\newcommand{\UltL}{\Ult^{\mathsf{L}}}
\newcommand{\pt}{\mathsf{pt}}
\newcommand{\vpt}{\mathfrak{pt}}
\newcommand{\id}{\mathsf{id}}
\newcommand{\ev}{\mathsf{ev}}
\newcommand{\Set}{\mathsf{Set}}
\newcommand{\SetO}{\mathsf{Set}[\mathbb{O}]}
\newcommand{\UF}{\EuScript{U}\EuScript{F}}
\newcommand{\Pretop}{\mathsf{Pretop}}
\newcommand{\Latt}{\mathsf{Latt}}
\newcommand{\Ind}{\mathsf{Ind}}
\newcommand{\Ob}{\mathsf{Ob}}
\newcommand{\Hom}{\mathsf{Hom}}
\newcommand{\Mod}{\mathsf{Mod}}
\newcommand{\cl}{\mathsf{cl}}
\newcommand{\Alex}{\mathsf{Alex}}
\newcommand{\res}[1]{{\upharpoonright{#1}}}
\newcommand{\ie}{\emph{i.e. }}
\newcommand{\paronto}{\rightharpoonup\mathrel{\mspace{-15mu}}\rightharpoonup}
\newcommand{\li}{\mathsf{h}}
\newcommand{\lii}{\mathsf{k}}
\title{Extending conceptual completeness via virtual ultracategories}
\author{Gabriel Saadia}
\address{Department of Mathematics, Stockholm University, Sweden}
\email{gabriel.saadia@gmail.com}
\begin{document}

\begin{abstract}
    We introduce the notion of virtual ultracategory. From a topological point of view, this notion can be seen as a categorification of relational $\beta$-algebras. From a categorical point of view, virtual ultracategories generalize ultracategories in the same way that multicategories generalize monoidal categories. From a logical point of view, whereas the points of a coherent topos form an ultracategory, the points of an arbitrary topos form a virtual ultracategory. We then extend Makkai--Lurie's conceptual completeness: a topos with enough points can be reconstructed from its virtual ultracategory of points.
\end{abstract}
\maketitle

\subsection*{Overview}

We introduce \emph{virtual ultracategories}, a notion categorifying the already-known notion of \emph{relational $\beta$-module}: one can axiomatize the notion of a topology on a set by prescribing the limit points of each ultrafilter instead of the open subsets, this gives the notion of relational $\beta$-modules described in \cite{Barr}, and this notion is equivalent to topological spaces. As noticed in \cite{GenMultiCat}, relational $\beta$-modules are generalized multicategories, and in the same way, virtual ultracategories can be seen as a generalized multicategorical extension of usual ultracategories.

Categorifying topological spaces is an old story, and (Grothendieck) topoi constitute the main objects playing this role; they categorify the properties satisfied by the poset of opens of a space: topoi are categories whose objects are thought of as the ‘‘categorified opens’’ of a ‘‘categorified space’’. It is well known that topoi also have a logical meaning: they classify geometric theories, and one can think of a topos as a categorified space whose points are the models of the classified geometric theory.

In the same way, we can see virtual categories from a logical point of view: they generalize ultracategories. Ultracategories are categories with abstract ultraproduct operations mimicking the usual ultraproduct of models of a first-order theory. This notion was introduced by Makkai in \cite{MakkaiStone} to prove a \emph{conceptual completeness} result: the category of models of a coherent theory has an ultracategory structure that is enough to recover the theory, or equivalently, from a toposic perspective, the points of a coherent topos form an ultracategory with enough structure to recover the topos. In this work, we extend this result: any topos induces a virtual ultracategory structure on its points, and, assuming that the topos has enough points, the virtual ultracategory structure on its points is enough to recover the topos we started from.
\begin{nThm}
    For $\E$ a topos with enough points, there is an equivalence
    \[\E\stackrel{\sim}\longrightarrow \vUlt(\vpt(\E),\Set)\]
    providing a way to reconstruct the topos $\E$ from its virtual ultracategory of points $\vpt(\E)$.
\end{nThm}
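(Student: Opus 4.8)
The plan is to study the evaluation functor directly and to bootstrap from the classical conceptual completeness theorem using a presentation of $\E$ by its points.

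\emph{Step 1: the comparison functor and its formal properties.} For $X\in\E$, sending a point $p$ of $\E$ (a geometric morphism $\Set\to\E$) to the stalk $p^{*}X$ underlies a virtual ultrafunctor $\ev_{X}\colon\vpt(\E)\to\Set$, this being part of the very definition of $\vpt(\E)$; letting $X$ vary yields the functor $\Phi\colon\E\to\vUlt(\vpt(\E),\Set)$ of the statement, the unit of the adjunction between ``points'' and ``virtual-ultra models''. I would first observe that $\Phi$ is left exact, since finite limits in $\vUlt(\vpt(\E),\Set)$ are computed on underlying objects (virtual ultraproducts preserve finite limits) and each $p^{*}$ is left exact. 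I would then show $\Phi$ is cocontinuous — here one must use the genuine, non-pointwise colimits of $\vUlt(\vpt(\E),\Set)$ (virtual ultraproducts do not commute with coproducts), so this step relies on the identification of $\vUlt(\vpt(\E),\Set)$ as a topos, which I would establish first, just as in the coherent case. Granting this, $\Phi=f^{*}$ for a geometric morphism $f\colon\vUlt(\vpt(\E),\Set)\to\E$; since $\E$ has enough points, $\Phi$ is faithful, hence $f$ is a surjection, and it remains to prove $f$ is an embedding, i.e. that $\Phi$ is fully faithful and essentially surjective.

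\emph{Step 2: full faithfulness as a definability statement.} Left exactness reduces fullness to the claim that $\Phi$ induces an isomorphism of subobject posets $\mathrm{Sub}_{\E}(X)\stackrel{\sim}\longrightarrow\mathrm{Sub}(\ev_{X})$ for all $X$: a virtual ultra-natural transformation $\ev_{X}\to\ev_{Y}$ is the same as a subobject of $\ev_{X}\times\ev_{Y}=\ev_{X\times Y}$ that projects isomorphically onto $\ev_{X}$, and both conditions are reflected along the putative inverse on subobjects, producing a map $X\to Y$. Injectivity of $\Phi$ on subobjects is ``enough points'' again; surjectivity — that a compatible family of subsets $V(p)\subseteq p^{*}X$ stable under the virtual ultrastructure is cut out by a single $U\hookrightarrow X$ — is a categorified Beth-style definability theorem, and is the real content. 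This is exactly where the flexibility of virtual ultracategories matters: the generalized multimorphisms carry the descent data that, for coherent topoi, is instead supplied by compact generators.

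\emph{Step 3: essential surjectivity via a presentation and the coherent case.} Using ``enough points'', I would fix a presentation of $\E$ — a site $(\C,J)$ with $\C$ small and finitely complete on which the points are jointly conservative, or a topological groupoid presentation in the style of Butz--Moerdijk — and describe $\vpt(\E)$ explicitly from it. Essential surjectivity would then proceed in two stages: first the ``coherent pieces'' of the presentation, where $\vpt(\E)$ restricts to an honest ultracategory and the classical theorem \cite{MakkaiStone} represents every virtual ultrafunctor by an object; then a bootstrap to all of $\E$, writing $\E$ as a left exact localization of a presheaf topos built from these pieces and analyzing how $\vpt$ and $\vUlt(-,\Set)$ transform this localization, compatibly with $\Phi$. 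Concretely, a general virtual ultrafunctor $F\colon\vpt(\E)\to\Set$ gets resolved by a bar diagram of representables $\ev_{X_{\bullet}}$ built from the $\vpt(\E)$-action on $F$, and one forms the corresponding colimit in $\E$.

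\emph{Main obstacle.} I expect the heart of the proof to be the coherence-free part of Steps 2 and 3: with no compactness available, one must genuinely prove that every virtual ultrafunctor is ``small'', represented by an object of $\E$, and that the bar-resolution reconverges to $F$ — which forces one to match the non-pointwise colimits of $\vUlt(\vpt(\E),\Set)$ against colimits in $\E$ through the Giraud-type exactness of both categories, and to use in an essential way that $\vpt(\E)$ is \emph{virtual} rather than an ordinary ultracategory. Establishing beforehand that $\vUlt(\vpt(\E),\Set)$ is a topos with these colimits is a prerequisite I would dispatch first.
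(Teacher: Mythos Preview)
Your plan diverges substantially from the paper's, and contains one factual slip worth correcting first.

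The slip: colimits in $\vUlt(\vpt(\E),\Set)$ \emph{are} computed pointwise. The comparison map $\colim_i \int_{s:\mu} A^i_s \to \int_{s:\mu} \colim_i A^i_s$ goes in the right direction to endow the pointwise colimit with a virtual-ultrasheaf structure, and since morphisms of ultrasheaves are given objectwise this pointwise colimit is the colimit (this is the paper's \Cref{lem:ultrasheaf-exactness}). So $\Phi$ is lex and cocontinuous for free, and there is nothing delicate here. Correspondingly, the paper does \emph{not} first establish that $\vSh(\vpt(\E))$ is a Grothendieck topos: it shows directly that $\vSh(X)$ is an infinitary pretopos for any $X$, and accessibility is a \emph{consequence} of the reconstruction theorem, not an input. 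Your Step~1 therefore rests on a misconception and a circularity.

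On overall strategy, the paper takes a quite different route. It first settles the localic case by characterising étale maps via ultraconvergence (\Cref{Thm:étale-ucvg}), so that $\Sh(T)\simeq\vSh(\vpt(T))$ for every space $T$. It then represents $\E$ by a specific topological groupoid $(T_\bullet^{\amp})$ of ``amply indexed models'', using Wrigley's criterion to know the groupoid represents $\E$. The ample condition is engineered precisely so that $\vpt(T_0^{\amp})\to\vpt(\E;X)$ satisfies a lifting hypothesis making it effective descent in $\vUlt$ (\Cref{prop:desc-vUlt}). One then reads off
\[\E \ \simeq\ \Sh_{eq}(T_\bullet^{\amp}) \ \simeq\ \vSh_{eq}(\vpt(T_\bullet^{\amp})) \ \simeq\ \vSh(\vpt(\E;X)),\]
the middle equivalence being the 0-dimensional case applied levelwise and the last being descent. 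There is no Beth-style definability step and no bootstrap from the coherent case; Lurie's theorem is recovered afterwards as a corollary, not used as an input.

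Your Makkai-style outline (subobject definability for fullness, bar resolution for essential surjectivity) is not unreasonable --- the paper itself remarks in its conclusion that one could likely adapt Makkai's or Lurie's arguments --- but as written Step~3 is where the real work hides, and you have not said how the bar resolution is controlled without any compactness on $\vpt(\E)$, nor how ``restricting to coherent pieces'' interacts with the virtual structure. In the paper that difficulty is absorbed instead into the combinatorics of ample indexings and a concrete descent lemma.
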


We would also like to emphasize the 0-dimensional case of our result, given by \Cref{Thm:étale-ucvg} characterizing étale maps between topological spaces through ultraconvergence. 
We state this characterization and prove it using only elementary concepts on topological spaces, as we believe it has its own interest outside the theory of virtual ultracategories and topoi. 
\begin{nThm}
    A continuous map of topological spaces $p : E \to T$ is étale if and only if
    \begin{enumerate}
        \item each ultrafilter $\mu\cgeq p(e)$ has a unique lift $\nu\cgeq e$,
        \item and each ultrafilter $\mu\cgeq p(e)$ has a lift $\nu\cgeq e$ that is principal over $T$ (\ie $p$ is injective on a $\nu$-large set).
    \end{enumerate}
\end{nThm}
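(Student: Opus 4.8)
The plan is to prove the two implications separately, reading ``$\mu\cgeq x$'' as ``$x$ is an ultralimit of $\mu$'' (equivalently, every open neighbourhood of $x$ lies in $\mu$) and ``$\nu$ is a lift of $\mu$ along $p$'' as ``$(\beta p)(\nu)=\mu$''. For the forward direction, suppose $p$ is étale, fix $e\in E$, and choose an open $U\ni e$ with $p|_U\colon U\to p(U)$ a homeomorphism onto an open set. Given $\mu\cgeq p(e)$ one has $p(U)\in\mu$, and transporting $\mu|_{p(U)}$ across the homeomorphism $p|_U$ yields an ultrafilter $\nu$ on $E$ with $U\in\nu$, $(\beta p)(\nu)=\mu$, and $\nu\cgeq e$ (convergence is transported by homeomorphisms); since $p$ is injective on $U\in\nu$, this $\nu$ witnesses~(ii). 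For~(i), any two lifts $\nu,\nu'$ of $\mu$ converging to $e$ both contain $U$, and as pushforward along the bijection $p|_U$ is a bijection between ultrafilters on $U$ and on $p(U)$, both $\nu$ and $\nu'$ restrict on $U$ to the unique preimage of $\mu|_{p(U)}$; hence $\nu=\nu'$.

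For the converse, assume (i) and~(ii). First, $p$ is open: if some open $U\ni e$ had $p(U)$ not a neighbourhood of $p(e)$, the sets $W\setminus p(U)$ (for $W$ an open neighbourhood of $p(e)$) form a filter base, and any ultrafilter $\mu$ refining it has $\mu\cgeq p(e)$ but $p(U)\notin\mu$; yet the lift $\nu\cgeq e$ furnished by~(ii) contains $U$, forcing $p(U)\in(\beta p)(\nu)=\mu$, a contradiction. Next, $p$ is locally injective: assuming $e$ has no open neighbourhood on which $p$ is injective, the sets $D\cap(U\times U)$, where $D=\{(x,y):x\neq y,\ p(x)=p(y)\}$ and $U$ ranges over open neighbourhoods of $e$, form a filter base on $E\times E$; refine it to an ultrafilter $\omega$ and set $\nu_i=(\beta\pi_i)(\omega)$. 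Then $\nu_1,\nu_2\cgeq e$ (each open $U\ni e$ has $D\cap(U\times U)\in\omega$, so $U\times U\in\omega$, so $U\in\nu_i$), and $(\beta p)(\nu_1)=(\beta p)(\nu_2)=:\mu$ (these agree because $p\pi_1=p\pi_2$ on the $\omega$-large set $D$), while $\mu\cgeq p(e)$ by continuity of $p$. By~(i), $\nu_1=\nu_2=:\nu$; by~(ii) some lift $\nu'\cgeq e$ of $\mu$ is principal over $T$, and~(i) applied again gives $\nu'=\nu$, so $p$ is injective on some $A\in\nu_1\cap\nu_2$. Then $A\times A\in\omega$, hence $D\cap(A\times A)\in\omega$ is nonempty, producing $x\neq y$ in $A$ with $p(x)=p(y)$, a contradiction. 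Finally, for each $e$ an open $U\ni e$ with $p|_U$ injective has $p(U)$ open (as $p$ is open) and $p|_U\colon U\to p(U)$ a continuous open bijection, hence a homeomorphism, so $p$ is étale.

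The main obstacle is the local-injectivity step. Merely exhibiting an ultrafilter supported off the diagonal near $e$ does not by itself yield two \emph{distinct} lifts of a common $\mu$ --- a product ultrafilter $\mathcal V\otimes\mathcal V$ is concentrated off the diagonal yet has equal projections --- so instead of trying to violate uniqueness directly, one must push both projections of $\omega$ through clause~(i), then invoke clause~(ii) to obtain a $\nu$-large set on which $p$ is injective, and contradict the support of $\omega$. Everything else reduces to routine manipulation of ultrafilter pushforwards together with the fact that a continuous open bijection is a homeomorphism.
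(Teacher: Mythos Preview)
Your proof is correct, and it takes a genuinely different route from the paper's. Both forward directions are essentially identical, but for the converse the paper does \emph{not} argue via ``open $+$ locally injective''. Instead, it works directly in $\beta(T)$: condition~(i) yields a set-theoretic section $\sigma_a:\beta_a(T)\to\beta(E)$ (where $\beta_a(T)$ is the closed subspace of ultrafilters converging to $a$), condition~(ii) forces this section to land in the \'etale subspace $\beta(E/T)\subseteq\beta(E)$, and a continuity check together with a sheaf-theoretic lemma (sections over compact subsets of Hausdorff spaces extend to open neighbourhoods) produces a continuous section $\zeta$ on an open $V\subseteq\beta(T)$. Pulling back along $\delta:T\to\beta(T)$ gives a candidate local inverse $\xi$, and a further restriction to the set $W$ where $\xi$ is ``locally defined and sequentially continuous'' yields the desired open neighbourhood on which $p$ is a homeomorphism.

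Your argument is considerably more elementary: it avoids the Stone--\v{C}ech machinery, the \'etale map $\beta(E/T)\to\beta(T)$, and the sheaf-extension lemma entirely, replacing them with two short contradiction arguments. The ultrafilter $\omega$ on $E\times E$ supported on the off-diagonal fibre-product is a clean device, and your observation that one cannot hope to produce two \emph{distinct} lifts directly (so one must instead use (ii) to get a $\nu$-large set of injectivity and contradict the support of $\omega$) is exactly the point where (ii) enters essentially. What the paper's approach buys is an explicit construction of the local section, which mirrors the structure of the later categorified descent argument (\Cref{prop:desc-vUlt}) more closely; what your approach buys is a self-contained proof using only basic ultrafilter manipulations, with no appeal to sheaf theory.
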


\subsection*{Acknowledgments} I would like to thank my PhD advisor \emph{Peter Lumsdaine} for his guidance and his precious advice during the elaboration of this work. I am also grateful to \emph{Errol Yuksel} for his useful suggestions and for his careful reading of preliminary versions of this paper. I would finally like to thank \emph{Ivan Di Liberti} for the suggestion of the name ``virtual ultracategories'' and \emph{Josh Wrigley} for having taken the time to answer some questions I had about his theorem on topological groupoids.

The author was supported by the Knut and Alice Wallenberg Foundation project ``Type Theory for Mathematics and Computer Science'' (PI Thierry Coquand). 

\tableofcontents

\section*{Introduction}

\subsection{Makkai's conceptual completeness}
In \cite{MakkaiStone}, Makkai proved a \emph{conceptual completeness} result for coherent logic, that is, a result providing a method to recover a coherent theory from a suitable structure on the collection of its models.
The categorical notion of theory used by Makkai is that of a \emph{small pretopoi}, seen as classifying categories for coherent theories. We refer to \cite[Chapter 7]{MakkaiReyes} for an overview of the relation between small pretopoi and coherent theories; we will not distinguish both and identify a coherent theory with its classifying pretopos.

The models of a small pretopos $\bbT$ are given by pretopos functors from $\bbT$ into the large pretopos of small sets
\[\Mod(\bbT) \coloneqq \Pretop(\bbT,\Set).\]
An object of $\bbT$ is thought of as a sort or a formula of the theory; and a model, seen as a pretopos functor into $\Set$, sends a formula to its \emph{interpretation} in the model.
As taking models corresponds to mapping into $\Set$, we have a natural map from $\bbT$ into its ‘‘bidual’’ given by mapping the category of models again into $\Set$
\[\ev : \bbT\longrightarrow\CAT(\Mod(\bbT),\Set),\]
for $A$ an object of $\bbT$, the functor $\ev(A)$ maps a model to the interpretation of $A$ in it; a functor of the form $\ev(A) : \Mod(\bbT)\to\Set$ is called \emph{definable}.

The crucial point is to note that any pretopos functor $F : \Set^S\to\Set$, with $S$ a set, gives by postcomposition an $S$-ary operation on $\Mod(\bbT)$ that we write $F_* : \Mod(\bbT)^S\to\Mod(\bbT)$; and definable functors $\ev(A) : \Mod(\bbT)\to\Set$ always preserve these operations \ie they map $F_*$ on $F$.
Thus, we could have taken \emph{all} such $F\in\Pretop(\Set^S,\Set)$ as $S$-ary operations on $\Mod(A)$ for the structure on $\Mod(\bbT)$ needed for conceptual completeness.

But $\Pretop(\Set^S,\Set)$ is hard to work with, and this is where ultraproducts come into play: a result stated by Joyal \cite{Joyal}, and revisited by Blass \cite{Blass2}, ensures that any such functor $F$ is canonically isomorphic to a filtered (large) colimit of ultraproduct functors. We can therefore restrict our attention to the ultraproduct functors $\int_{\mu} : \Set^S \to \Set$, for $\mu$ an ultrafilter on $S$.

This motivates the notion of \emph{ultracategory}: a category $\C$ with abstract ultraproduct operations $\int_{\mu} : \C^S\to\C$ for any ultrafilter $\mu$ on $S$. The prototypical examples of ultracategories are given by the categories $\Mod(\bbT)$ and, in particular, the category $\Set$.
Ultracategories form a 2-category $\Ult$ and Makkai proved that the functor $\ev$ from above corestricts to an equivalence of categories
\[\ev : \bbT\longrightarrow\Ult(\Mod(\bbT),\Set),\]
showing how to reconstruct $\bbT$ from its ultracategory of models.
He deduces then that the functor $\Mod : \Pretop^{\op} \to \Ult$ is fully faithful, and, as the forgetful functor from ultracategories to categories is conservative, the functor $\Mod : \Pretop^{\op} \rightarrow \CAT$ is conservative. This means that a functor of pretopoi is an equivalence if and only if it induces an equivalence on its categories of models. Restricted to Boolean pretopoi, this result reduces to the usual Gödel completeness theorem for classical first-order logic.

\begin{nrmk}
    Makkai calls the first result ($\Mod : \Pretop^{\op} \to \Ult$ fully faithful) \emph{strong conceptual completeness}, and the second result ($\Mod : \Pretop^{\op} \hookrightarrow \CAT$ conservative) \emph{conceptual completeness}.
    In the above case, strong conceptual completeness implies conceptual completeness only because $\Ult$ is conservative over $\CAT$. However, this will not be the case for the future extensions of this duality, thus strong conceptual completeness does not always imply conceptual completeness. We will never use the terminology ‘‘strong conceptual completeness’’ as we find it confusing, and use \emph{conceptual completeness} interchangeably with \emph{reconstruction theorem} to express that the functor $\Mod$ mapping a theory to its category of models with some suitable structure is fully faithful.
\end{nrmk}

\begin{nrmk}
    Owing to Joyal's result about $\Pretop(\Set^S,\Set)$ one could have asked for ultracategories to have filtered colimits and for ultrafunctors to preserve them (as is done in \cite[3.1.1]{Ivan}). It is a surprising fact that the preservation of filtered colimits is not required for conceptual completeness; the above result shows that ultrafunctors between ultracategories of the form $\Mod(\bbT)$ always preserve filtered colimits. This remark should be compared with \cite[Proposition 5.3.4]{Lurie}.
\end{nrmk}

\begin{nrmk}
    We have explained how ultraproducts arise from purely categorical considerations, by looking at the category of pretopos endofunctors on $\Set$. From the point of view of a model theorist, the appearance of ultraproducts in the conceptual completeness is not so surprising: ultraproducts play a central role in the model theory of first-order logic; we refer to \cite{Kleisler} for a general survey of their usefulness. We note in particular that the ultrafilter principle and completeness of first-order logic are mutually equivalent in ZF (and slightly weaker than choice). 
\end{nrmk}

Makkai presents his result as a categorification of the \emph{Stone duality}. Let us take a look at the posetal case of the above: a posetal pretopos (or a propositional coherent theory) is a distributive lattice $A$, and its models are given by $\Mod(A) = \Latt(A,2)$.
As above, any lattice homomorphism $\mu : 2^S\to 2$ induces an operation $\mu_* : \Mod(A)^S\to\Mod(A)$. In fact, such a lattice homomorphism corresponds to an ultrafilter on $S$, and the induced operation can be seen as taking the limit of a family of $S$-many points along this ultrafilter. Hence, the operations $\mu_*$ endow the set $\Mod(A)$ with a notion of convergence along ultrafilters, which corresponds to a $\beta$-algebra structure on $\Mod(A)$ for $\beta$ the usual \emph{ultrafilter monad} on $\Set$. It is a well-known result that $\beta$-algebras are exactly compact Hausdorff spaces; hence these operations yield a topology on $\Mod(A)$ which recovers the usual logical topology (the Stone topology if $A$ is Boolean). Makkai's result shows that $A\simeq\TopSp(\Mod(A),2)$, thus recovering the usual Stone duality: $A$ is isomorphic to the lattice of clopens of $\Mod(A)$.

This suggests that the ultrastructure on a category of models of a coherent theory can be thought of as a ‘‘categorified Stone topology’’, ultraproducts of models as ‘‘categorified ultraconvergence’’, and ultracategories as ‘‘categorified $\beta$-algebras’’.
Following this idea, there is a pseudomonad $\bb : \CAT \to \CAT$ on locally small categories categorifying the usual ultrafilter monad $\beta$, and ultracategories are pseudoalgebras for this pseudomonad, which we call the \emph{ultrafamilies pseudomonad}. This $\bb$ has already been defined, under different names, by Rosolini--Garner and Hamad in not yet published works \cite{Pino}, \cite{Hamad}.

\subsection{Lurie's extension}
Extending on this work of Makkai, Lurie gives in \cite{Lurie} a new proof of Makkai's results.
The main difference in Lurie's approach is the use of \emph{coherent topoi} instead of small pretopoi to represent coherent theories. Let us consider the sheaf functor $\Sh:\Pretop^{\op} \to \CohTop$ sending a pretopos to its classifying topos, given by the sheaves over it for the coherent topology. 
This functor is faithful, full on equivalences, but not full: the maps (geometric morphisms) between coherent topoi are geometric interpretations between coherent theories, and such an interpretation is in the image of the faithful functor $\Sh$ above if and only if it does not use the full power of geometric logic but only its finitary part (\ie coherent logic).
Essentially, going from pretopoi to coherent topoi extends to allowing more interpretations between the coherent theories, and accordingly, Lurie introduces a more flexible notion of functor between ultracategories: \emph{left-ultrafunctors}.

The main theorem of Lurie can then be stated as follows: for a coherent topos $\E$ with ultracategory of points $\pt(\E)$, the functor
\[\E\longrightarrow\UltL(\pt(\E),\Set)\]
(where $\UltL$ denotes the 2-categories of ultracategories and left-ultrafunctors) is an equivalence, and this equivalence can be restricted to recover Makkai's result.
The situation can be summarized by the following diagram of 2-categories
\[\begin{tikzcd}
	{\Pretop^{\op}} & \Ult \\
	\CohTop & {\UltL}
	\arrow[hook, "\text{full}", from=1-1, to=1-2]
	\arrow[hook, from=1-1, to=2-1]
	\arrow[hook, from=1-2, to=2-2]
	\arrow[hook, "\text{full}", from=2-1, to=2-2]
\end{tikzcd}\]
where the two horizontal functors are fully faithful, whereas the vertical ones are only faithful (and induce equivalences of the core groupoids). Thus, the notion of left-ultrafunctor is the one to consider when seeing ultracategories as categorified topological spaces.

\begin{nrmk}
    As explained above, we cannot deduce from Lurie's result that the functor $\Mod : \CohTop \to \CAT$ is conservative; being a left-ultrafunctor is a structure on a functor, and so the forgetful functor $\UltL\to\CAT$ is not conservative.
\end{nrmk}

\subsection{Virtual ultracategories}
Virtual ultracategories arise naturally from the following observation: the ultraproduct of models is a categorification of the convergence of ultrafilters; but all topologies on a set can be recovered from their ultraconvergence relations, not only compact Hausdorff ones; hence, we should be able to generalize the ultraproduct operation to all topoi.

The ultraconvergence structure on a topological space is not anymore given by an operation as for the compact Hausdorff case, but by a \emph{relation} between ultrafilters and points. The objects resulting from the axiomatization of topological spaces through the ultraconvergence relation are called \emph{relational $\beta$-modules} and are described in \cite{Barr}. Virtual ultracategories categorify relational $\beta$-modules, replacing relations by distributors (also called profunctors) and the monad $\beta$ by the pseudomonad $\bb$, so another name for virtual ultracategories could have been \emph{distributional $\bb$-modules}.

As explained in \cite[Example 4.13]{GenMultiCat}, relational $\beta$-modules can be seen as \emph{generalized multi-categories}: objects are given by the points of the space, and there is a unique generalized arrow from a point $a$ to an ultrafamily of points $(b_s)_{s:\mu}$ if the image of $(b_s)_{s:\mu}$ converges to $a$. We get the notion of virtual ultracategory by removing the posetal restriction: there can be several arrows from a point $a$ to an ultrafamily of points $(b_s)_{s:\mu}$.
In the same way, we expect virtual ultracategories to be a generalized multicategorical notion. Concretely, a virtual ultracategory has generalized arrows of the shape $a\ultrato(b_s)_{s:\mu}$ with domain a single object $a$ and codomain an ultrafamily of objects $(b_s)_{s:\mu}$.

\vspace{9pt} 
\begin{center}
\begin{tabular}{l  r}
    \hspace{60pt}
    $\xleftarrow[]{\text{decategorify}}$
    & \hspace{40pt} $\xleftarrow[]{\text{points = } \Hom(\Set,-)}$
\end{tabular}
\nopagebreak

\begin{tabular}{ c | c | c }
 & & \\
\begin{tabular}{@{}c@{}}compact Hausdorff spaces \\ \emph{= $\beta$-algebras}\end{tabular}
    & \begin{tabular}{@{}c@{}} ultracategories \\ \emph{= $\bb$-pseudoalgebras}\end{tabular}
        &  {coherent topoi}
\\ & & \\\hline
 & & \\
\begin{tabular}{@{}c@{}}topological spaces \\ \emph{= relational $\beta$-modules}\end{tabular}
    & \begin{tabular}{@{}c@{}} \textbf{virtual ultracategories} \\ \emph{= distributional $\bb$-modules}\end{tabular}
        &  topoi
\\ & &
\end{tabular}
\nopagebreak

\begin{tabular}{ l  r}
    \hspace{60pt}
    $\xrightarrow[\text{categorify}]{}$
    & \hspace{50pt} 
    $\xrightarrow[\text{sheaves = } \Hom(-,\Set)]{}$
\end{tabular} 
\end{center}
\vspace{9pt}

\begin{nrmk}
    Virtual ultracategories are to ultracategories what multicategories are to monoidal categories: a generalized arrow of the shape $a\ultrato(b_s)_{s:\mu}$ can be thought of as an arrow from $a$ into the potential ultraproduct $\int_{s:\mu}b_s$ of the $(b_s)_{s:\mu}$. We expect that virtual ultracategories generalize ultracategories in the precise sense that ultracategories are the virtual categories satisfying the representability condition given in \cite[Section 9]{GenMultiCat}. This justifies the adjective ‘‘virtual’’: virtual ultracategories are to ultracategories what multicategories are to monoidal categories (or what virtual double categories are to double categories).
\end{nrmk}

\subsection{The reconstruction theorem}
Following the analogy with topological spaces, starting from a topos $\E$ we can construct a virtual ultrastructure $\vpt(\E)$ on its points, extending the usual category of points of $\E$. We might hope that $\vpt(\E)$ is enough to recover the original topos; the aim of the paper is to show that it is the case if the topos has enough points. More precisely, we show that the evaluation functor
\[\E\longrightarrow\vUlt(\vpt(\E),\Set),\]
is an equivalence if $\E$ has enough points, and that we recover Lurie's theorem \cite[Theorem 0.0.6]{Lurie} in the case when $\E$ is coherent.

We will see this result as a \emph{reconstruction result} for topoi with enough points, showing that one can recover a topos with enough points (seen as a complete geometric theory) from the virtual ultracategory of its points (the models of the theory together with some ‘‘topological structure’’ on it). We will present it as a pseudoidempotent 2-adjunction with $\Set$ playing the role of a dualizing object.
\[\begin{tikzcd}
    \GTop && \vUltb
    \arrow[""{name=0, anchor=center, inner sep=0}, "{\vpt = \Hom(\Set,-)}"', curve={height=18pt}, from=1-1, to=1-3]
    \arrow[""{name=1, anchor=center, inner sep=0}, "{\vSh = \Hom(-,\Set)}"', curve={height=18pt}, from=1-3, to=1-1]
    \arrow["\vdash"{anchor=center, rotate=90}, draw=none, from=0, to=1]
\end{tikzcd}\]
The induced comonad on $\GTop$ corresponds to the \emph{restriction of a topos to all its points}, and thus this adjunction induces a reflection of Grothendieck topoi with enough points into bounded virtual ultracategories.

\subsection*{Structure of the paper}
In \Cref{sec:Ultracombi}, we recall basic notions and facts about ultrafilters; in particular, we define the functor fat beta $\bb : \CAT\to\CAT$ categorifying the usual $\beta:\Set\to\Set$. Sections 2 and 3 elaborate on the first sections in two different directions: in \Cref{sec:ultracvg} we recall how one can see topological spaces as relational $\beta$-modules and we prove a characterization of étale maps through ultraconvergence (\Cref{Thm:étale-ucvg}) that is essentially the 0-dimensional case of our reconstruction theorem; in \Cref{sec:ultracat} we fix the definition of ultracategories we work with. \Cref{sec:vultracat} and \Cref{sec:back-forth} are dedicated to, respectively, introducing virtual ultracategories and constructing functors relating them to topoi; in particular, this allows us to state our reconstruction theorem. Sections 6 and 7 constitute the core of the proof: in \Cref{sec:desc} we prove a crucial result giving a sufficient condition for a functor of virtual ultracategories to be effective descent (\Cref{cor:T0ample-desc}); in \Cref{sec:ample} we show how to build a specific topological groupoid so that we can apply the descent result of section 6. The last section acts as a conclusion: we prove the reconstruction theorem (\Cref{Thm:thetrueone}) and present it as an idempotent adjunction (\Cref{cor:final-adj}). 

\subsection*{Conventions}
We work in a classical metatheory with choice and one universe. The universe gives us a notion of small sets; when not specified, a set is small by default. We use the terminology large set or class to designate a, not necessarily small, set. We denote by $\Set$ the category of small sets; by $\Cat$ the category of small categories; and by $\CAT$ the category of large locally small categories.

When working with strict 2-categories (\ie categories enriched in categories), we use the prefix ``2-'' for the strict notion, and the prefix ``pseudo-'' for the weak notion. For example, a 2-pullback is defined up to strict isomorphism, while a pseudopullback is only defined up to internal equivalence.

We use the abbreviation \emph{lex} for left-exact: a lex category is a category with finite limits, and a lex functor is a finitely continuous functor.
A \emph{pretopos} (resp. \emph{infinitary pretopos}) is a category satisfying the following exactness properties:
\begin{enumerate}
    \item it is lex,
    \item it has a strict initial object,
    \item it has finite (resp. small) disjoints coproducts that are pullback stable,
    \item and it has quotient of equivalences relations that are pullback stable. 
\end{enumerate}
Morphism of pretopoi are functors preserving the pretopos structure; we denote $\Pretop$ the 2-category of small pretopoi, pretopos morphisms, and natural transformations between them.

By \emph{topoi} we will always mean Grothendieck topoi, that is (by Giraud's theorem) an accessible infinitary pretopoi. Topoi form a strict 2-category $\GTop$: 1-arrows are \emph{geometric morphisms}, a geometric morphism from $\E$ to $\F$ is given by its \emph{inverse image} that is a lex and cocontinuous functor from $\F$ to $\E$; 2-arrows are natural transformations between the inverse images.
We denote $\SetO$ the topos classifying the theory of objects, $\SetO$ represents the forgetful functor from topoi to categories \ie $\E$ and $\GTop(\E,\SetO)$ are equivalent categories.

We denote by $\TopSp$ the locally posetal 2-category of topological spaces: 1-arrows are continuous maps, and 2-arrows are given by the specialization order: for $f,g:T\to T'$ continuous maps, $f\cleq g$ if and only if, for all $x\in T$ and $U$ open of $T'$, then $U\ni f(x)\implies U\ni g(x)$. In particular, taking sheaves induces a strict 2-functor $\TopSp\to\GTop$.

We denote by $\Sh(T)$ the category of sheaves over a topological space $T$; we will freely identify a sheaf over $T$ and its étale space $E\to T$.

\section{Ultracombinatorics}\label{sec:Ultracombi}

In this section, we recall some basic definitions and constructions with ultrafilters.

\begin{defn}
    An \emph{ultrafilter} on a set $S$ is a lattice homomorphism $\mu : 2^S\to S$. A subset $A\subseteq S$ such that $\mu(A)=1$ is said \emph{$\mu$-large}. We can rephrase the lattice homomorphism axioms in terms of $\mu$-large subsets:
    \begin{enumerate}
        \item $\emptyset$ is not $\mu$-large.
        \item $\mu$-large subsets are upward closed for $\subseteq$.
        \item $\mu$-large subsets are closed by finite intersections (in particular $S$ is $\mu$-large).
        \item if $A\cup B$ is $\mu$-large, then at least one of $A$ or $B$ is $\mu$-large. 
    \end{enumerate}
    Another way to see an ultrafilter is as a finitely additive $\{0,1\}$-valued measure on $S$, and following \cite{Lurie} we adopt the notation $\mu,\nu...$ for ultrafilters.
\end{defn}

\begin{ex}
    For each $s$ in $S$ there is an ultrafilter $\delta_s$ called the \emph{principal ultrafilter at $s$} denoting the measure concentrated at $s$: a subset is $\delta_s$-large if and only if it contains $s$. This gives a map $\delta : S\hookrightarrow\beta(S)$.
\end{ex}

\begin{defn}
    The \emph{Stone-\v{C}ech compactification} of a set $S$ is the topological space $\beta(S)$ whose points are ultrafilters on $S$ and whose topology is generated by the basic opens
    \[\{\mu\in\beta(S) : A\text{ is $\mu$-large}\}\]
    with $A$ ranging over the subsets of $S$. 
\end{defn}
The following proposition is standard.
\begin{prop}\label{prop:betaUP}
    The above basis defines a compact Hausdorff topology on $\beta(S)$ satisfying the following universal property: for $K$ compact Hausdorff, any map of sets $S\to K$ can be uniquely extended along $\delta$ into a continuous map $\beta(S)\to K$, \ie $\beta(S)$ is the free compact Hausdorff space on the discrete set of points $S$.
\end{prop}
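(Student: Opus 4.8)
The plan is to handle the two assertions in turn: that the given basis defines a compact Hausdorff topology, and that $\beta(S)$ has the stated universal property, the latter being where essentially all the content lies.

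For the topology, write $\widehat A\coloneqq\{\mu\in\beta(S):A\text{ is }\mu\text{-large}\}$ for $A\subseteq S$. The ultrafilter axioms immediately give $\widehat A\cap\widehat B=\widehat{A\cap B}$, $\widehat S=\beta(S)$, $\widehat\emptyset=\emptyset$, and---because for an ultrafilter exactly one of $A$ and $S\setminus A$ is large---also $\widehat{S\setminus A}=\beta(S)\setminus\widehat A$; so these sets form a basis of clopens. Hausdorffness is then immediate, since distinct ultrafilters $\mu\neq\nu$ are separated by the complementary clopens $\widehat A$ and $\widehat{S\setminus A}$ for any $A$ with $\mu(A)=1\neq\nu(A)$. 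For compactness I would reduce to a cover by basic opens $\{\widehat{A_i}\}_{i\in I}$ and observe that $\bigcup_{i\in F}\widehat{A_i}=\widehat{\bigcup_{i\in F}A_i}$ for finite $F$, together with the fact that $\widehat C=\beta(S)$ forces $C=S$ (as then $C$ is $\delta_s$-large for every $s$); thus no finite subcover exists precisely when $\{S\setminus A_i\}_{i\in I}$ has the finite intersection property, and extending this family to an ultrafilter---here the ultrafilter lemma is used, which is available in our metatheory---produces a point lying in no $\widehat{A_i}$, a contradiction.

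For the universal property, given $K$ compact Hausdorff and $f:S\to K$, I would define $\overline f:\beta(S)\to K$ by sending $\mu$ to the limit in $K$ of the pushforward ultrafilter $f_*\mu$ (where $B$ is $f_*\mu$-large iff $f^{-1}(B)$ is $\mu$-large); this is well defined because $f_*\mu$ converges by compactness and to a unique point by Hausdorffness. Since $f_*\delta_s=\delta_{f(s)}$ converges to $f(s)$, we get $\overline f\circ\delta=f$. Continuity I would check on preimages of basic opens: given an open $U$ of $K$ and $\mu\in\overline f^{-1}(U)$, regularity of $K$ (compact Hausdorff spaces are normal) gives an open $V$ with $\overline f(\mu)\in V\subseteq\overline V\subseteq U$; then $V$ is $f_*\mu$-large, so $\mu\in\widehat{f^{-1}(V)}$, and for every $\nu\in\widehat{f^{-1}(V)}$ the set $V$ is $f_*\nu$-large, which forces $\overline f(\nu)\in\overline V\subseteq U$ (otherwise $K\setminus\overline V$ would be a neighborhood of $\overline f(\nu)$ disjoint from the large set $V$). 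So $\overline f^{-1}(U)$ is a union of basic opens. For uniqueness I would use that $\delta(S)$ is dense---every nonempty $\widehat A$ contains $\delta_s$ with $s\in A$---so any two continuous extensions into the Hausdorff space $K$ agree on a dense subset, hence everywhere.

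The step I expect to demand the most care is the continuity of $\overline f$: it is the only place that uses compact Hausdorffness of $K$ beyond the mere definition of $\overline f$, and it genuinely needs regularity to shrink $U$ to an open whose closure still sits inside $U$. The other non-formal ingredient is the compactness of $\beta(S)$, whose proof is exactly a repackaging of the ultrafilter lemma.
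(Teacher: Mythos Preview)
Your argument is correct in all parts: the basis of clopens, Hausdorffness via separating clopens, compactness via the ultrafilter lemma, the definition and continuity of $\overline f$ using regularity of $K$, and uniqueness via density of $\delta(S)$. The paper itself gives no proof, simply declaring the proposition ``standard''; your write-up supplies exactly the standard proof one would expect.
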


\begin{defn}
    Let $f : S\to T$ be a map of sets. We define $f_* : \beta(S)\to\beta(T)$ to be the extension of $S\to \beta(T), s\mapsto\delta_{f(s)}$ given by \Cref{prop:betaUP}, we will call $f_*(\mu)$ the \emph{pushforward of $\mu$ along $f$}.
\end{defn}

One can compute $f_*(\mu)$, a subset $B\subseteq T$ is $f_*(\mu)$-large if and only if its preimage is $\mu$-large. Note also that pushing forward along $f$ corresponds to precomposing with $2^T\to 2^S$.

\begin{rmk}
    Hence $\beta$ induces a functor from $\Set$ to the category of compact Hausdorff spaces, and the universal property above shows that it is actually the left adjoint to the forgetful functor, giving the underlying set of a space. Actually, this adjunction is monadic: the functor $\beta : \Set \to \Set$ has a monad structure whose algebras are compact Hausdorff spaces.
\end{rmk}

\begin{nota}
    For $(a_s)_{s:S}$ a $S$-family of points in $K$, we denote by $\int_{s:\mu}a_s$ the image of $\mu$ under the corresponding extension given by \Cref{prop:betaUP}. We will think of $\int_{s:\mu}a_s$ as the limit of the family $(a_s)_{s:S}$ directed by $\mu$.
    For example, $f_*(\mu) = \int_{s:\mu}\delta_{f(s)}$. More generally, given $(\nu_s)_{s:S}$ a $S$-family of ultrafilters on $T$ and $\mu$ an ultrafilter on $S$, we can consider $\nu\coloneqq\int_{s:\mu}\nu_s\in\beta(T)$ the limit of $(\nu_s)_{s:S}$ directed by $\mu$: a subset is $\nu$-large if and only if it is $\nu_s$-large for a $\mu$-large number of $s$. 
\end{nota}

We interpret an ultrafilter on $S$ as encoding a way to take a limit of a $S$-indexed family of points in a (compact Hausdorff) space. For example, the principal ultrafilter $\delta_{x_0}$ encodes the trivial convergence to $x_0$: we always have $\int_{x:\delta_{x_0}}a_x = a_{x_0}$.
If $S_0\subseteq S$ is $\mu$-large, then $\mu$ restricts to an ultrafilter $\mu_{\res{S_0}}$ on $S_0$, and taking the limit of the restricted family along $\mu_{\res{S_0}}$ gives the same result \ie $\int_{s:\mu}a_s = \int_{s:\mu_{\res{S_0}}}a_s$. Hence $\mu$ and $\mu_{\res{S_0}}$ encode the same way to take a limit, we will say that they have the same \emph{type of ultraconvergence}. This motivates the following definition.

\begin{defn}\label{def:iso-uf}
    An \emph{isomorphism} between two ultrafilters $\mu\in\beta(S)$ and $\nu\in\beta(T)$ is given by a bijection between a $\mu$-large subset $S_0$ and a $\nu$-large subset $T_0$ identifying the restrictions $\mu_{\res{S_0}}$ and $\nu_{\res{T_0}}$.
    Two ultrafilters are said to have the same \emph{type of ultraconvergence} if there exists a bijection between them.
\end{defn}

\begin{rmk}
    Note that $\mu$ and $\nu$ are isomorphic if and only if $\mu\in\beta(X)$ and $\nu\in\beta(Y)$ have homeomorphic neighborhoods. 
\end{rmk}

The underlying set of an ultrafilter is not invariant under isomorphism of ultrafilters. The main tool to manipulate ultrafilters without specifying the underlying set is the following notion of \emph{ultrafamily}.

\begin{defn} 
    Let $\mu$ be an ultrafilter on $S$ and $A$ a set (or a class). A \emph{$\mu$-family} $(a_s)_{s:\mu}$ of elements of $A$ is a family $(a_s)_{s:S_0}$ defined on a $\mu$-large set $S_0$ modulo the relation identifying two such families if they coincide on a $\mu$-large set. A $\mu$-family will be denoted by $(a_s)_{s:\mu}$ and we will denote by $A^{\mu}$ the set of $\mu$-families of elements of $A$. 
    One can check that $A^{\mu} = \colim_{\mu(S_0) = 1}(A^{S_0})$ and that $A^{\mu}$ does not depend on the set underlying $\mu$.
\end{defn}

We now generalize the notion of ultrafamily to the one of \emph{dependent ultrafamily}, this recovers the usual notion of ultraproduct of a family of sets.

\begin{defn}
    For $(A_s)_{s:\mu}$ an ultrafamily of sets (or classes), a \emph{dependent ultrafamily} is given by a dependent family $(a_s \in A_s)_{s:S_0}$ defined on a $\mu$-large set $S_0$, modulo the relation identifying two families if they coincide on a $\mu$-large set, we will denote it by $(a_s\in A_s)_{s:\mu}$ or by $a : (s:\mu) \to A_s$. We denote by $\int_{s:\mu}A_s$ the set of such $\mu$-dependent families, it is called the \emph{ultraproduct} of the $(A_s)_{s:\mu}$. We recover the above notion $A^{\mu}$ by taking $\int_{\mu}A$, the ultraproduct of the constant family.
\end{defn}

\begin{lem}\label{lem:up-prod}
    Ultraproducts commute with (dependent) products of sets. For $(A_s)_{s:\mu}$ a $\mu$-family of sets, and $(B_s(a))_{a:A_s}$ a dependent family of sets over $A_s$, there is a canonical bijection
    \[\int_{s:\mu}(a:A_s)\times B_s(a)\stackrel{\sim}\longrightarrow\left((a_s):\int_{s:\mu}A_s\right)\times\int_{s:\mu}B_s(a_s).\]
\end{lem}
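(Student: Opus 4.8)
The plan is to reduce the claimed bijection to the case of set-indexed (rather than $\mu$-indexed) data, where it becomes an entirely elementary manipulation of dependent families, and then check that the construction descends to the colimit defining ultraproducts. First I would choose a $\mu$-large set $S_0$ on which all the data in sight is representable: pick representatives $(A_s)_{s : S_0}$ for the ultrafamily of sets, and for each $s : S_0$ a dependent family $(B_s(a))_{a : A_s}$. Over $S_0$ we then have the ordinary (non-ultra) statement that $\prod$ distributes over $\Sigma$ fibrewise: a dependent family $\bigl(c_s \in \sum_{a : A_s} B_s(a)\bigr)_{s : S_0}$ is the same datum as a pair consisting of a family $(a_s \in A_s)_{s : S_0}$ together with a family $\bigl(b_s \in B_s(a_s)\bigr)_{s : S_0}$, by splitting each $c_s$ into its two components $c_s = (a_s, b_s)$ and reassembling. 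This is a natural bijection on the nose, compatible with restriction along inclusions $S_1 \subseteq S_0$ of $\mu$-large subsets.

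Next I would pass to the quotient. By the description $\int_{s:\mu} C_s = \colim_{\mu(S_0) = 1}\bigl(\prod_{s : S_0} C_s\bigr)$ from the definition of dependent ultrafamily, both sides of the asserted bijection are filtered colimits over the poset of $\mu$-large subsets of $S$ (ordered by reverse inclusion), and the fibrewise bijection above is a map of diagrams over that poset; indeed it commutes with the restriction maps because it is defined componentwise in $s$. Hence it induces a bijection on colimits. The only mild subtlety is that on the right-hand side the inner ultraproduct $\int_{s:\mu} B_s(a_s)$ depends on the choice of a representative $(a_s)$ of the class $(a_s) : \int_{s:\mu} A_s$; but two representatives agreeing on a $\mu$-large set $S_1$ give dependent families $B_s(a_s)$ that agree on $S_1$, so the corresponding ultraproducts are literally equal, and the pairing is well defined on classes. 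One also checks the construction is independent of the auxiliary choice of $S_0$, which is automatic once everything is phrased as a colimit.

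I do not expect any genuine obstacle here: the content is the trivial distributivity of products over sigma-types fibrewise, and the work is purely bookkeeping to see that it is stable under the equivalence relations defining ultrafamilies and dependent ultrafamilies. If one wants, the bijection can be written down explicitly: a class $\bigl((a_s, b_s)\bigr)_{s:\mu}$ on the left is sent to the pair $\bigl((a_s)_{s:\mu}, (b_s)_{s:\mu}\bigr)$ on the right, and conversely a pair is glued back together componentwise; the two assignments are visibly mutually inverse on representatives and respect the identifications, so they pass to the ultraproducts. The mildly delicate point worth a sentence in the writeup is the well-definedness of the inner ultraproduct $\int_{s:\mu} B_s(a_s)$ under change of representative of $(a_s)_{s:\mu}$, which is exactly what makes the right-hand side meaningful as stated.
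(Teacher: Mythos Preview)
Your proposal is correct and is essentially the argument the paper has in mind: the paper's proof of this lemma is simply the line ``The proof is straightforward,'' and your writeup is exactly the unpacking of that straightforward computation---distributing $\prod$ over $\Sigma$ fibrewise on a $\mu$-large $S_0$ and then passing to the filtered colimit defining the ultraproducts.
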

The proof is straightforward. Note, however, that ultraproducts do not commute with infinite products.

\begin{defn}
    Ultraproducts reduce to a generalized quantifier in the propositional world, thus defining a notion of \emph{ultraquantification} for an ultrafamily of propositions. If the $(\varphi(s))_{s:\mu}$ are all subsingletons, then $\int_{s:\mu}\varphi(s)$ is also one, and $\int_{s:\mu}\varphi(s)$ is inhabited if and only if there is a $\mu$-large set on which all $\varphi(s)$ are inhabited. We will write $(\forall s:\mu) \varphi(s)$ instead of $\int_{s:\mu}\varphi(s)$, and say that the $\varphi(s)$ are \emph{true for $s$ in $\mu$} or \emph{true for $\mu$-all $s$}.
\end{defn}

\begin{prop}\label{prop:ultraquantification}\leavevmode
    \begin{enumerate}[label=(\roman*)]
        \item\label{autodual} Ultraquantification is autodual, 
        \[(\forall x:\mu)\neg \varphi(x)\leftrightarrow\neg(\forall x:\mu)\varphi(x).\]
        \item\label{prop:ultraquantification2} For $(A_x)$ a $\mu$-family of sets and $(\psi_x)$ predicates on the $(A_x)$,
        \[(\forall x:\mu)(\forall a\in A_x)\psi_x(a)\leftrightarrow (\forall (a_x)_{x:\mu}\in\int_{x:\mu}A_x)(\forall x:\mu)\psi_x(a_x).\]
    \end{enumerate}
\end{prop}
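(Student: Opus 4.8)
The plan is to prove \ref{autodual} first, purely from the ultrafilter axioms, and then derive \ref{prop:ultraquantification2} from it with the help of the axiom of choice.

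For \ref{autodual}: a $\mu$-family of propositions $\varphi$ is represented by a family $(\varphi(x))_{x:S_0}$ on some $\mu$-large set $S_0$, and $S_0$ is the disjoint union of $\{x\in S_0 : \varphi(x)\}$ and $\{x\in S_0 : \neg\varphi(x)\}$. Since $S_0$ is $\mu$-large, ultrafilter axiom (iv) forces at least one of these two pieces to be $\mu$-large, while axioms (i) and (iii) forbid both from being $\mu$-large at once (their intersection is empty). Hence exactly one of the sets $\{x:\varphi(x)\}$, $\{x:\neg\varphi(x)\}$ is $\mu$-large, so the statement ``$\{x:\neg\varphi(x)\}$ is $\mu$-large'' is equivalent to ``$\{x:\varphi(x)\}$ is not $\mu$-large'', which is precisely the biconditional $(\forall x:\mu)\neg\varphi(x)\leftrightarrow\neg(\forall x:\mu)\varphi(x)$.

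For \ref{prop:ultraquantification2}, the forward implication is immediate: if $(\forall x:\mu)(\forall a\in A_x)\psi_x(a)$ is witnessed on a $\mu$-large set $S_0$, then for any dependent ultrafamily $(a_x\in A_x)_{x:\mu}$ represented on a $\mu$-large set $S_1$, the intersection $S_0\cap S_1$ is $\mu$-large and witnesses $(\forall x:\mu)\psi_x(a_x)$. The converse I would argue by contraposition. Assume $(\forall x:\mu)(\forall a\in A_x)\psi_x(a)$ fails; applying \ref{autodual} to the $\mu$-family $x\mapsto\bigl((\forall a\in A_x)\psi_x(a)\bigr)$ shows that $(\forall x:\mu)\neg(\forall a\in A_x)\psi_x(a)$ holds, \ie there is a $\mu$-large set $S_0$ on which, for each $x$, there exists $a\in A_x$ with $\neg\psi_x(a)$. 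By the axiom of choice, pick such an $a_x\in A_x$ for every $x\in S_0$; this defines a dependent ultrafamily $(a_x)_{x:\mu}\in\int_{x:\mu}A_x$ with $(\forall x:\mu)\neg\psi_x(a_x)$, hence, by \ref{autodual} once more, with $\neg(\forall x:\mu)\psi_x(a_x)$, so the right-hand side fails as well.

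The only genuine subtlety is the appeal to choice in the converse of \ref{prop:ultraquantification2} — without it one cannot turn the pointwise existence statements into an actual dependent ultrafamily — together with the routine bookkeeping of representatives defined on $\mu$-large sets modulo $\mu$-large agreement; everything else is a direct unwinding of the definitions of ultraquantification and of dependent ultrafamily.
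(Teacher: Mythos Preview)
Your proof is correct and follows essentially the same route as the paper: part \ref{autodual} is the same ``exactly one of the two pieces is $\mu$-large'' argument, and for part \ref{prop:ultraquantification2} the paper also negates both sides and applies \ref{autodual}, then invokes \Cref{lem:up-prod} to handle the resulting existential statement --- your explicit choice-based construction of the witness family $(a_x)$ is precisely what is hidden inside that lemma.
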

\begin{proof}\leavevmode
    \begin{enumerate}
        \item Either the set of $x$ satisfying $\varphi(x)$ or its complementary is $\mu$-large.
        \item By negating both side of the equivalence and applying \ref{autodual}, this amounts to the following
        \[(\forall x:\mu)(\exists a\in A_x)\neg \psi_x(a) \leftrightarrow (\exists (a_x)_{x:\mu}\in\int_{x:\mu}A_x)(\forall x:\mu)\neg \psi_x(a_x),\]
        and this follows from \Cref{lem:up-prod}. \qedhere
    \end{enumerate}
\end{proof}

When we work with categories instead of sets, the colimit defining ultraproducts should be taken 2-categorically; this is the purpose of the next definition.
\begin{defn}
    Let $\C$ be a category, we denote by $\C^{\mu}$ the filtered 2-colimit of $(\C^{S_0})_{\mu(S_0)=1}$ taken in the 2-category $\Cat$. Explicitly, its objects are $S$-families of objects of $\C$ and the set of arrows from $(c_s)_{s:S}$ to $(d_s)_{s:S}$ is given by $\int_{s:\mu}\C(c_s,d_s)$.
\end{defn}
\begin{rmk}
    Whereas $A^{\mu}$ does not depend on the set underlying $\mu$ up to bijection of sets, the category $\C^{\mu}$ does not depend on the set underlying $\mu$ only up to equivalence. So, if $\C$ is discrete, $\C^{\mu}$ can be identified to the ultraproduct of sets from above only up to equivalence.
\end{rmk}

Ultrafamilies on a fixed set $X$ form a category.

\begin{defn}
    Let be $X$ a set (or a class), the category of \emph{ultrafamilies in $X$} is defined as follows:
    its objects are pairs of an ultrafilter $\mu$ on some set $S$ together with a family $(x_s) \in X^S$; and arrows from $(\mu,(x_s)_{s:S})$ to $(\nu,(y_t)_{t:T})$ are given by $f\in T^{\mu}$ such that $\int_{s:\mu}f(s) = \nu$ and $(y(f(s)))_{s:\mu} = (x(s))_{s:\mu}$. 
    \[\begin{tikzcd}
    	\mu && \nu \\
    	& X
    	\arrow["f", maps to, from=1-1, to=1-3]
    	\arrow["x"', dashed, from=1-1, to=2-2]
    	\arrow["y", dashed, from=1-3, to=2-2]
    \end{tikzcd}\]
    Up to isomorphisms, its objects are ultrafamilies $(x_s)_{s:\mu}$ in $X$. We denote this category $\UF_X$ (it corresponds to $\UF_{\Set^X}$ of \cite[Definition 21]{GarnerUF}).
\end{defn}

\begin{defn}
    For a set (or a class) $X$, we define $\bb(X)\coloneqq\UF_X^{\op}$. This $\bb$ is functorial in $X$ and the functor $\Set\to\CAT$ obtained is called \emph{fat beta}.  
\end{defn}

\begin{rmk}\label{rmk:largebb}
    We can Kan-extend this construction to any locally small category $\C$ by $\UF_{\C}\coloneqq\colim_{X\to\Ob(\C)}\UF_X$, and similarly $\bb(\C)\coloneqq\colim_{X\to\Ob(\C)}\bb(X)$. Note that $\bb(\C)$ is the opposite of the category $\UF_{\C^{\op}}$ (and not of $\UF_{\C}$). The construction $\UF_{\C}$ can be thought of as a variation of the usual ‘‘free sum completion’’ $\mathsf{Fam}(\C)$.
\end{rmk}

\begin{nota}
    We denote $\UF$ instead of $\UF_1$, this recovers the usual category of ultrafilters, first introduced by Blass in \cite{Blass}. Explicitly, for $\mu$ and $\nu$ two ultrafilters on respectively $S$ and $T$, a map of ultrafilters $f\in\UF(\mu,\nu)$ is given by a $\mu$-family $(f(s))_{s:\mu}$ in $T$ such that $\int_{s:\mu}f(s) = \nu$.  
    Note that isomorphisms of $\UF$ recover the notion of \Cref{def:iso-uf}.
\end{nota}

\begin{rmk}
    For any category $\C$, the mapping $\mu \mapsto \C^{\mu}$ forms a strict 2-functor $\UF^{\op} \to \Cat$.
    Moreover, the category of elements of this functor recovers $\bb(\C)$.
\end{rmk}

We finish this section by defining \emph{the dependent sum operation} on ultrafilters.

\begin{defn}
    Let $(T_s)_{s:S}$ be a $S$-family of sets, $\mu$ an ultrafilter on $S$ and $(\nu_s)_{s:S}$ ultrafilters on the $(T_s)_{s:S}$. The \emph{sum ultrafilter} $\sum_{s:\mu}\nu_s$ is the ultrafilter $\int_{s:\mu}(i_s)_*(\nu_s)$ on $T\coloneqq\sum_{s:S}T_s$, where $i_s : T_s \hookrightarrow T$ is the canonical inclusion. Concretely $\sum_{s:S}A_s\subseteq \sum_{s:S}T_s$ is $\sum_{s:\mu}\nu_s$-large if $A_s$ is $\nu_s$-large for $\mu$-all $s$.
    Note that if the family of sets $(T_s)$ is constant to a set $T$ we recover $\int_{s:\mu}\nu_s$ as $\pi_*(\sum_{s:\mu}\nu_s)$, where $\pi$ denotes the second projection map $\sum_{s:S}T \to T$.
    
    We sometimes write $(s:\mu)\otimes\nu_s$ instead of $\sum_{s:\mu}\nu_s$, and if the family $(\nu_s)$ is constant to $\nu$, we denote the sum ultrafilter by $\mu\otimes\nu$.

    The sum of ultrafilters is functorial: for $\mu$ an ultrafilter, the sum operation gives a functor $\sum_{\mu} : \UF^{\mu}\to\UF$; we have more, the sum operation induces a functor $\UF_{\UF}\to\UF$ (this is the opposite of the functor $\bb(\bb(1))\to\bb(1)$ showing that $\bb(1)$ is an ultracategory, see \Cref{ex:bb-ultracat}).
\end{defn}

\begin{rmk}
    In general $\mu\otimes\nu$ and $\nu\otimes\mu$ are not comparable. They are both extensions of the filter $\mu\times\nu$ generated by the products of a $\mu$-large set with a $\nu$-large set, but in full generality we cannot say more.
\end{rmk}

\begin{cons}\label{cons:ass-unit-sum}
     We denote by $\ptuf$ the unique ultrafilter on the singleton set, so all principal ultrafilters have the same ultratype as $\ptuf$. This plays the role of a neutral for the sum operation: we have the following isomorphism of ultrafilters.
    \begin{equation*}
        \mu\otimes\ptuf \simeq \mu \hspace{30pt} \ptuf\otimes\mu \simeq \mu \hspace{30pt} \sum_{r:\lambda}\sum_{s:\mu_r}\nu_s \simeq \sum_{s:\sum_{r:\lambda}\mu_r}\nu_s
    \end{equation*}
\end{cons} 

\begin{cons}\label{cons:curry}
    For a category $\C$, we have a canonical curryfication isomorphism:
    \[\C^{\sum_{s:\mu}\nu_s}\stackrel{\sim}\longrightarrow \int_{s:\mu}\C^{\nu_s}\]
    that we can  write in a more suggestive way
    \[((s:\mu\otimes \nu_s)\to\C) \cong (s:\mu\to(\nu_s\to\C)).\] 
\end{cons}

\section{Ultraconvergence in topological spaces}\label{sec:ultracvg}

Let $\mu$ be an ultrafilter on the point of a set $T$. We think of $\mu$ as a $\mu$-family of points $(x)_{x:\mu}$ on $T$. Notice that if $A\subseteq T$ then $A$ is $\mu$-large can be expressed as $\forall x:\mu, x\in A$, or also as $\mu\in\beta(A)\subseteq\beta(T)$. Hence, we will sometimes say that $A$ contains $\mu$, and write $\mu\in A$, to express that $A$ is $\mu$-large.
If now the set $T$ is endowed with a topology, we can express the fact that a point is a limit point of an ultrafilter (seen as an ultrafamily of points as explained above); this should be compared with the usual sequential convergence of points.

\begin{defn}
    Let $T$ be a topological space, $a$ a point of $T$, and $\mu$ an ultrafilter on the set of points of $T$. We say that \emph{the ultrafilter $\mu$ converges to the point $a$}, and we write $a\cleq\mu$, if any open containing $a$ contains $\mu$, \ie
    \[(\forall U\ \text{open})\ U\ni a \implies U\ni \mu.\]
    If $(x_s)_{s:\mu}$ is a $\mu$-family of points of the space we write $a\cleq (x_s)_{s:\mu}$ instead of $a\cleq x_*(\mu)$, \ie 
    \[(\forall U\ \text{open})\ U\ni a \implies (\forall x:\mu)\ U\ni x.\]
\end{defn}

\begin{ex}
    If $T$ is compact Hausdorff, $a\cleq\mu$ if and only if $\int_{x:\mu}x = a$.
\end{ex}

\begin{ex}
    A a sequence $(x_n)_{n:\bbN}$ converges to a point $a$ if and only if $a\cleq(x_s)_{s:\mu}$ for all ultrafilters $\mu$ non-principal on $\bbN$.
\end{ex}

\begin{rmk}
    The ultraconvergence relation generalizes the specialization order on points:  $a\cleq \delta_b$ if and only if $a$ is less than $b$ for the specialization preorder. (Recall that the specialization preorder on the points of a space is given by: $a$ is less than $b$ if and only if any open containing $a$ also contains $b$ or equivalently, $a$ belongs to the adherence of $\{b\}$.)
    Ultraconvergence can thus be seen as an extension of the specialization order that is (as shown by the following proposition) strong enough to recover the topology of the space.
\end{rmk}

\begin{prop}\label{charact:open}
    A subset $A\subseteq T$ is open if and only if
    \[(\forall a\cleq\mu)\ a\in A\implies \mu\in A.\]
\end{prop}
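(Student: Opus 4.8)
The plan is to prove both implications directly from the definition of the ultraconvergence relation $\cleq$, using the characterization of $\mu$-large subsets via ultraquantification.

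First I would prove the easy direction: if $A$ is open, then the implication $a\cleq\mu \implies (a\in A \implies \mu\in A)$ is immediate, since by definition $a\cleq\mu$ says that every open containing $a$ contains $\mu$, and $A$ is such an open as soon as $a\in A$. So no work is needed here beyond unwinding definitions.

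For the converse, I would argue by contrapositive: suppose $A$ is not open, and produce an ultrafilter $\mu$ and a point $a\in A$ with $a\cleq\mu$ but $\mu\notin A$. Since $A$ is not open, there is some point $a\in A$ that is not an interior point of $A$, \ie every open $U$ containing $a$ meets the complement $T\setminus A$. The key step is to use this to build an ultrafilter: consider the family of sets $\{U\setminus A : U\ \text{open},\ a\in U\}$ together with, say, $\{b\}$-free considerations — more precisely, the collection $\{U\setminus A : a\in U\ \text{open}\}$ is a filter base (it is closed under finite intersections because the opens containing $a$ are, and none of its members is empty by assumption), so by the ultrafilter principle it extends to an ultrafilter $\mu$ on $T$. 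By construction every $U\setminus A$ with $a\in U$ open is $\mu$-large, hence in particular every open $U$ containing $a$ is $\mu$-large, which is exactly $a\cleq\mu$; and $T\setminus A$ is $\mu$-large (take $U=T$), so $A$ is not $\mu$-large, \ie $\mu\notin A$. This contradicts the assumed implication, so $A$ must be open.

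The main obstacle is essentially bookkeeping: checking that $\{U\setminus A : a\in U\ \text{open}\}$ is genuinely a filter base (nonemptiness of each member is where non-openness of $A$ at $a$ is used, and finite-intersection closure follows since $(U_1\setminus A)\cap(U_2\setminus A) = (U_1\cap U_2)\setminus A$), and then invoking the ultrafilter principle correctly. There is no deep content here — this proposition is just the statement that the ultraconvergence relation determines the topology, and the proof is a routine application of the ultrafilter principle, entirely parallel to the classical fact that a set is closed iff it contains the limits of all convergent nets/filters contained in it.
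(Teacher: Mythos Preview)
Your proof is correct and follows essentially the same approach as the paper: the forward direction is immediate from the definition, and for the converse both you and the paper extend the neighborhood filter of a non-interior point $a\in A$ together with $T\setminus A$ to an ultrafilter (your filter base $\{U\setminus A\}$ generates exactly the filter the paper calls $\eF_a\cup\{T\setminus A\}$). The only cosmetic difference is that the paper argues directly (``for each $a\in A$, show $A$ is a neighborhood of $a$'') while you phrase it as a global contrapositive, but the content is identical.
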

\begin{proof}
    Suppose $a\in A$ open and $\mu$ converges to $a$, then by definition of the ultraconvergence $A$ contains $\mu$.
    Conversely, let $a\in A$ and consider $\eF_a$ the filter of neighborhoods of $a$, note that $T\setminus A$ is never in $\eF_a$. So, if $A$ were not in $\eF_a$, we could have extended $\eF_a\cup\{T\setminus A\}$ to an ultrafilter converging to $a$, contradicting the hypothesis. Hence $A$ is in $\eF_a$, showing that $A$ is a neighborhood of $a$.
\end{proof}

\begin{rmk}
    In a similar way, one can show the following:
    \begin{enumerate}
        \item $A$ is closed if and only if
        $(\forall a\cleq\mu)\ \mu\in A\implies a\in A$.
        \item the interior of $A$ is given by
        $\{a\in T : (\forall \mu\cgeq a) \mu \in A\}$.
        \item the closure of $A$ is given by
        $\{a\in T : (\exists \mu\cgeq a) \mu \in A\}$.
        \item the set of points that are limit of an ultrafilter $\mu$ is given by
        $\bigcap_{\mu(A)=1}\cl(A)$, the intersection of all closures of $A$ for $A$ a $\mu$-large subset.
        \item a topological space is discrete if and only if $a\cleq\mu$ only for $\mu = \delta_a$.
        \item a topological space is codiscrete if and only if any ultrafilter converges to any point.
    \end{enumerate}
\end{rmk}

Hence, one can define a topological space using the ultraconvergence relation instead of the lattice of open subsets.

\begin{defn}\label{def:rel-beta-mod}
    A \emph{relational $\beta$-module} is given by a small set $X$ together with a relation $\cleq$ between $X$ and $\beta(X)$ such that
    \begin{enumerate}
        \item $a\cleq \delta_a$ for any $a\in X$; 
        \item and if $a\cleq \mu$, and $(\nu_x)_{x:\mu}\in\beta(X)^{\mu}$ is a $\mu$-family of ultrafilters on $X$ with $(\forall x:\mu)\ x\cleq\nu_x$, then $a\cleq\int_{x:\mu}\nu_x$.
    \end{enumerate}
     
\end{defn}

\begin{rmk}
    For any topological space $T$, the ultraconvergence relation on it gives a relational $\beta$-module; conversely, from a relational $\beta$-module one can define a topology as in \Cref{charact:open}; one can then show that these two data are equivalent. We see a relational $\beta$-module as a generalized posetal multicategory: the relation $a\cleq\mu$ is seen as a generalized arrow with domain $a$ and codomain the ultrafamily $(x)_{x:\mu}$, and the conditions of \Cref{def:rel-beta-mod} correspond to the identity and the composition of these generalized arrows. 
\end{rmk}

The rest of this section is dedicated to the proof of the \Cref{Thm:étale-ucvg}, that characterizes étale maps through ultraconvergence; we first begin with a definition.

\begin{defn}\label{def:principal-over}
    Let $p : E\to T$ a map of sets, we denote by $E_s$ the fiber of $p$ at $s\in T$. For an ultrafilter $\nu\in\beta(E)$ the following conditions are equivalent:
    \begin{enumerate}
        \item $p$ is injective on a $\nu$-large set.
        \item $p$ induces an isomorphism of ultrafilters from $\nu$ to $p_*(\nu)$.
        \item $\nu$ can be written as $\sum_{s:\mu}\delta_{e_s}$ with $\mu\in\beta(T)$ and $(e_s)\in \int_{s:\mu}E_s$.
    \end{enumerate}
    If it is the case, we say that $\nu$ is \emph{principal over} $T$, note that being principal over the terminal set recovers the usual notion of principal ultrafilter.
    We denote $\beta(E/T)$ the subspace of $\beta(E)$ consisting of the $\nu$ principal over $T$.
\end{defn}

\begin{rmk}
    The idea behind this definition is that $\nu$ sits ‘‘horizontally’’ above its image and does not have any ‘‘vertical content’’.
\end{rmk}

\begin{Thm}\label{Thm:étale-ucvg}
    A continuous map of topological spaces $p : E \to T$ is étale if and only if
    \begin{enumerate}[label=(\roman*)]
        \item\label{cond1} any ultrafilter $\mu\cgeq p(e)$ has a unique lift $\nu\cgeq e$,
        \item\label{cond2} and any ultrafilter $\mu\cgeq p(e)$ has a lift $\nu\cgeq e$ that is principal over $T$ (\ie $p$ is injective on a $\nu$-large set).
    \end{enumerate}
\end{Thm}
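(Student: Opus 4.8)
The plan is to prove both implications by unwinding the definition of étale (local homeomorphism) into statements about fibers and local sections, and then translating those into ultraconvergence statements via the characterizations recalled in this section (\Cref{charact:open} and the subsequent remark, the description of interiors/closures through ultrafilters, and \Cref{def:principal-over} for ``principal over $T$'').

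For the forward direction, suppose $p : E \to T$ is étale, fix $e \in E$ with $p(e) = a$, and let $\mu \cgeq a$. Choose an open $V \ni e$ on which $p$ restricts to a homeomorphism $V \xrightarrow{\sim} U$ onto an open $U \ni a$; write $\sigma : U \to V$ for the inverse section. Because $\mu \cgeq a$ and $U$ is open with $a \in U$, the set $U$ is $\mu$-large, so $\mu$ restricts to an ultrafilter $\mu_{\res U}$, and I would set $\nu := \sigma_*(\mu_{\res U}) \in \beta(V) \subseteq \beta(E)$, i.e. $\nu = (\sigma(t))_{t:\mu}$. One checks $p_*(\nu) = \mu$ (since $p\sigma = \id_U$ on the $\mu$-large set $U$), that $\nu \cgeq e$ (any open $W \ni e$ contains $W \cap V$, and $\sigma^{-1}(W \cap V)$ is an open neighborhood of $a$, hence $\mu$-large, hence $W$ is $\nu$-large), and that $\nu$ is principal over $T$ because $p$ is injective on $V$, which is $\nu$-large — this gives \ref{cond2}. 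For uniqueness \ref{cond1}: if $\nu' \cgeq e$ is any lift of $\mu$, then $V$ is an open neighborhood of $e$ so $V$ is $\nu'$-large; on $V$ the map $p$ is injective with inverse $\sigma$, so $\nu'$ is determined on the $\nu'$-large set $V$ by $p_*(\nu') = \mu$, forcing $\nu'_{\res V} = \sigma_*(\mu_{\res U}) = \nu_{\res V}$, hence $\nu' = \nu$.

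For the converse, assume \ref{cond1} and \ref{cond2}; I would show $p$ is étale by producing, for each $e \in E$, an open neighborhood on which $p$ is injective and open onto an open set. First I claim $p$ is open: given $A \subseteq E$ open, I use \Cref{charact:open} to check $p(A)$ is open in $T$, i.e. whenever $a' \cleq \mu$ with $a' \in p(A)$ then $\mu$ is $p(A)$-large. Write $a' = p(e')$ with $e' \in A$; apply the lifting hypothesis to get $\nu \cgeq e'$ with $p_*(\nu) = \mu$; since $A$ is open and $e' \in A$, $A$ is $\nu$-large, hence $p(A) \supseteq p(A \cap (\nu\text{-large set}))$ is $p_*(\nu) = \mu$-large. (A small point: ``$\nu$ is $A$-large $\Rightarrow$ $\mu = p_*(\nu)$ is $p(A)$-large'' holds because the preimage of $p(A)$ contains $A$.) Next, local injectivity: fix $e$; I want an open $V \ni e$ with $p\res V$ injective. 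Suppose not; then every open neighborhood of $e$ contains two distinct points with the same image, and I would cook up an ultrafilter $\nu \cgeq e$ that is \emph{not} principal over $T$ while another lift of the same $\mu := p_*(\nu)$ over $e$ is principal — contradicting either uniqueness \ref{cond1} or the combination of \ref{cond1} with \ref{cond2}. Concretely, using the neighborhood filter of $e$ together with the family of ``pairs of points over the same fiber'' one extends to an ultrafilter $\nu$ witnessing failure of principality over $T$; then $p_*(\nu) = \mu \cgeq e$ has by \ref{cond2} a lift principal over $T$, which by \ref{cond1} must equal $\nu$, contradiction. Finally, combining: on such a $V$ (open, $p\res V$ injective) with $p$ open, $p\res V : V \to p(V)$ is a continuous open continuous bijection onto the open set $p(V)$, hence a homeomorphism, so $p$ is étale.

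The main obstacle I expect is the local-injectivity step in the converse: turning ``$p$ is not injective on any neighborhood of $e$'' into an honest ultrafilter $\nu \cgeq e$ that fails to be principal over $T$, and doing the bookkeeping so that the two lifts produced (one from the construction, one principal from \ref{cond2}) are genuinely distinct, forcing the contradiction with \ref{cond1}. This requires carefully choosing which large sets to put into the ultrafilter — one wants $\nu$ to ``see'' the collisions in the fibers (so that $p$ is not injective on any $\nu$-large set, i.e. $\nu \notin \beta(E/T)$ by \Cref{def:principal-over}) while still converging to $e$. A clean way to package this is to build $\nu$ as a sum ultrafilter $\sum_{s:\mu}\nu_s$ with each $\nu_s$ an ultrafilter on the fiber $E_s$ that is non-principal whenever the fiber collision persists, and to verify via the neighborhood filter of $e$ that such a $\nu$ can be made to converge to $e$; the rest is then a direct appeal to \ref{cond1} and \ref{cond2}. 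The forward direction and the openness part of the converse are essentially routine once the translation dictionary of this section is in hand.
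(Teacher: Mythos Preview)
Your forward direction is correct and matches the paper's. Your converse strategy---reduce ``étale'' to ``open + locally injective'' and handle each separately---is genuinely different from the paper's approach and is more elementary: the paper instead constructs the local section explicitly by passing to $\beta(T)$, using that $\beta(E/T)\to\beta(T)$ is étale (\Cref{lem:principal-over}) together with the sheaf-extension \Cref{lem:étale-ucvg} to extend the section $\sigma_a$ off the closed set $\beta_a(T)$, then pulling back along $\delta$ and carving out a suitable open $W$. Your route avoids both of those lemmas; what you lose is the explicit description of the local inverse.

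Your openness argument is fine. The gap is in the local-injectivity step, and specifically in the construction you propose at the end. Building $\nu$ as $\sum_{s:\mu}\nu_s$ with $\nu_s$ a \emph{non-principal} ultrafilter on the fiber $E_s$ fails whenever the fibers are finite (e.g.\ a double cover), since there are then no non-principal ultrafilters on $E_s$; yet such examples can perfectly well fail local injectivity. The right construction is the one you almost write down a sentence earlier: work on the collision set
\[
C\coloneqq\{(x,y)\in E\times E : x\neq y,\ p(x)=p(y)\}.
\]
The hypothesis ``$p$ is injective on no open neighbourhood of $e$'' says exactly that $\{\pi_1^{-1}(V)\cap\pi_2^{-1}(V): V\ni e\text{ open}\}$ is a proper filter base on $C$; extend it to an ultrafilter $\lambda$ and set $\nu_i\coloneqq(\pi_i)_*\lambda$. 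Then $\nu_1,\nu_2\cgeq e$ and $p_*\nu_1=p_*\nu_2=:\mu$ since $p\pi_1=p\pi_2$ on $C$. If $\nu_1\neq\nu_2$ you contradict \ref{cond1} immediately. If $\nu_1=\nu_2=:\nu$, then $\nu$ cannot be principal over $T$: were $p$ injective on a $\nu$-large $A$, then $\pi_1^{-1}(A)\cap\pi_2^{-1}(A)$ would be $\lambda$-large and hence contain some $(x,y)$ with $x,y\in A$, $x\neq y$, $p(x)=p(y)$, contradicting injectivity on $A$. Now \ref{cond2} gives a principal-over-$T$ lift of $\mu$, which by \ref{cond1} equals $\nu$, contradiction. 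With this fix your argument goes through.
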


\begin{rmk}
    As shown in \Cref{prop:0dimcase}, this characterization gives a description of a sheaf over a topological space as a bunch of germs $(E_x)_{x:T}$ glued by coherent maps. This result has already been proven by Lurie for compact Hausdorff spaces in \cite[Section 3.4]{Lurie}; we generalize it to all topological spaces.
\end{rmk}

\begin{rmk}
    We compare the \Cref{Thm:étale-ucvg} above to the result in \cite{lochomeo}. In \cite{lochomeo}, the authors define a \emph{discrete fibration} of topological space to be a map satisfying only the lifting condition \ref{cond1}. They show that this condition is not sufficient to characterize étale maps and give several characterizations of étale maps making use of pullbacks of spaces; however, ultrafilters on products (and pullbacks) are not easy to work with.
\end{rmk}

We prove some preliminary lemmas.

\begin{lem}\label{charact:map}
    A set map $f : A\to B$ between topological spaces is continuous if and only if $f_*(\mu)$ converges to $f(a)$ for any $\mu$ converging to $a$.
\end{lem}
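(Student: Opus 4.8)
The plan is to prove both implications directly from the definition of ultraconvergence together with the pushforward characterization recorded just before this lemma, namely that a subset $V\subseteq B$ is $f_*(\mu)$-large if and only if $f^{-1}(V)$ is $\mu$-large. The forward implication should be essentially immediate, and the converse will be the only place where a previously proven result is genuinely needed.

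For the forward direction, I would assume $f$ continuous and take an ultrafilter $\mu$ with $a\cleq\mu$. To show $f(a)\cleq f_*(\mu)$, I take an arbitrary open $V\subseteq B$ containing $f(a)$; by continuity $f^{-1}(V)$ is open in $A$, and it contains $a$ since $f(a)\in V$. The hypothesis $a\cleq\mu$ then forces $f^{-1}(V)$ to be $\mu$-large, which by the pushforward characterization says exactly that $V$ is $f_*(\mu)$-large. As $V$ was an arbitrary open neighborhood of $f(a)$, this gives $f(a)\cleq f_*(\mu)$.

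For the converse, I would assume the convergence-preservation condition and deduce continuity by checking that $f^{-1}(V)$ is open for every open $V\subseteq B$, using the characterization of openness through ultraconvergence in \Cref{charact:open}. So I fix an open $V\subseteq B$ and take any $a\cleq\mu$ with $a\in f^{-1}(V)$; I must show $f^{-1}(V)$ is $\mu$-large. Since $a\in f^{-1}(V)$ means $f(a)\in V$, and the hypothesis gives $f(a)\cleq f_*(\mu)$, the open $V$ containing $f(a)$ must be $f_*(\mu)$-large; translating back through the pushforward characterization, $f^{-1}(V)$ is $\mu$-large, which is precisely the condition \Cref{charact:open} requires. Hence $f^{-1}(V)$ is open and $f$ is continuous.

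There is no serious obstacle here: the whole argument is a direct unwinding of definitions, with the single nontrivial input being \Cref{charact:open}, which converts the topological notion of continuity (preimages of opens are open) into the purely relational statement about convergence that the pushforward equivalence lets me verify. The only point to state cleanly is the dictionary ``$V$ is $f_*(\mu)$-large $\iff$ $f^{-1}(V)$ is $\mu$-large'', which is used identically in both directions.
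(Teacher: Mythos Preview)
Your proposal is correct and essentially identical to the paper's proof: both directions unwind the definition of ultraconvergence via the pushforward characterization, with the converse appealing to \Cref{charact:open} to recognize $f^{-1}(V)$ as open. The only difference is that you spell out the dictionary ``$V$ is $f_*(\mu)$-large $\iff$ $f^{-1}(V)$ is $\mu$-large'' explicitly, which the paper uses tacitly.
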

\begin{proof}
    Suppose $f$ continuous and $\mu\cgeq a$, then for any open $U$ containing $f(a)$, $f^{-1}(U)$ is an open containing $a$, so $\mu$ is in $f^{-1}(U)$, and so $f_*(\mu)$ is in $U$ as wanted.
    Conversely, take $U$ an open of $B$; if $\mu\cgeq a$ with $a\in f^{-1}(U)$, then (by hypothesis) $f_*(\mu)\cgeq f(a)$ with $f(a)\in U$ open, so $f_*(\mu)$ is in $U$ and $\mu$ is in $f^{-1}(U)$, this concludes that $f^{-1}(U)$ is open by \Cref{charact:open}.
\end{proof}

\begin{lem}\label{charact:openmap}
    A continuous map $f : A\to B$ such that any ultrafilter $\mu\cgeq f(a)$ can be lifted to an ultrafilter $\nu\cgeq a$ is open.
\end{lem}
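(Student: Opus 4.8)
The plan is to verify openness of $f$ through the ultraconvergence characterization of \Cref{charact:open}. Concretely, it suffices to show that for every open $V\subseteq A$ the image $f(V)$ is open in $B$, and by \Cref{charact:open} this reduces to checking that whenever $b\cleq\mu$ with $b\in f(V)$, the set $f(V)$ is $\mu$-large.

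So fix an open $V\subseteq A$, a point $b\in f(V)$, and an ultrafilter $\mu\cgeq b$. First I would pick a preimage $a\in V$ with $f(a)=b$; then $\mu\cgeq f(a)$, and the hypothesis produces a lift $\nu\cgeq a$, \ie an ultrafilter $\nu$ on the set of points of $A$ with $f_*(\nu)=\mu$ and $\nu$ converging to $a$. Since $V$ is open and $a\in V$, the definition of ultraconvergence gives that $V$ is $\nu$-large.

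It remains to push this forward. Because $f^{-1}(f(V))\supseteq V$, upward closure of the $\nu$-large sets makes $f^{-1}(f(V))$ $\nu$-large; and since a subset of $B$ is $f_*(\nu)$-large exactly when its $f$-preimage is $\nu$-large, we conclude that $f(V)$ is $f_*(\nu)=\mu$-large, which is what \Cref{charact:open} requires. (The continuity hypothesis is not used here; it is stated only because the lemma will later be applied to continuous maps.)

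There is essentially no hard step in this argument: the only thing to be careful about is faithfully unwinding the definitions of ``lift'', of $\nu$-largeness under pushforward, and of openness via \Cref{charact:open}, and threading them together in the right order. The one point worth double-checking is that the lift $\nu$ genuinely satisfies $f_*(\nu)=\mu$ (so that largeness transfers) and not merely $f_*(\nu)\geq$ or $\leq\mu$ in some order — but this is exactly what the word ``lift'' means in the statement.
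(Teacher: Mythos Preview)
Your argument is correct and follows essentially the same route as the paper: apply \Cref{charact:open} to $f(V)$, lift the ultrafilter through the hypothesis, and push $\nu$-largeness of $V$ forward to $\mu$-largeness of $f(V)$. Your observation that continuity is not actually used in this direction is also correct.
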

\begin{proof}
    Suppose $f$ satisfies the hypothesis, and let $U$ be an open of $A$, we use \Cref{charact:open} to show that $f(U)$ is an open of $B$. Let $\mu\cgeq f(a)$ with $a\in U$, by hypothesis $\mu = f_*(\nu)$ for some $\nu\cgeq a$. As $U$ is open, $\nu\in U$, and so $\mu = f_*(\nu) \in f(U)$.
\end{proof}

\begin{rmk}
    The above lemma is actually a characterization of open maps: a continuous map $f$ is open if and only if any ultrafilter $\mu\cgeq f(a)$ can be lifted to an ultrafilter $\nu\cgeq a$.
    There is a dual characterization for proper maps: a continuous map $f$ is proper if and only if any point $a\cleq f(\nu)$ can be lifted to a point $b\cleq\nu$.
\end{rmk}

\begin{lem}\label{lem:principal-over}
    Let $p:E\to T$ be a map between sets that we see as an étale map of discrete topological spaces. The pushforward of $p$ along the continuous injection $\delta : T\hookrightarrow\beta(T)$ is given by $q\coloneqq (p_*)_{\res{\beta(E/T)}}$ the restriction of $p_*$ to $\beta(E/T)$ from \Cref{def:principal-over}.
\[\begin{tikzcd}
	E & {\beta(E/T)} & {\beta(E)} \\
	T & {\beta(T)}
	\arrow["p"', from=1-1, to=2-1]
	\arrow["\subseteq"{description}, draw=none, from=1-2, to=1-3]
	\arrow["{q}"', dashed, from=1-2, to=2-2]
	\arrow["{p_*}", from=1-3, to=2-2]
	\arrow["\delta"', from=2-1, to=2-2]
\end{tikzcd}\]
    Moreover, the fiber of $q$ at $\mu\in\beta(T)$ is given by the ultraproduct $\int_{x:\mu}E_x$, where $E_x$ denotes the fiber of $p$ at $x$.
\end{lem}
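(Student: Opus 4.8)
The plan is to read ``the pushforward of $p$ along $\delta$'' as the direct image $\delta_*p\in\Sh(\beta(T))$ (legitimate since $\delta\colon T\hookrightarrow\beta(T)$ is an open embedding), and to argue in three steps: (a)~$q=(p_*)\res{\beta(E/T)}$ is étale; (b)~its fibre over $\mu$ is canonically $\int_{x:\mu}E_x$; (c)~$q\cong\delta_*p$. For (a), the key point is that, as $E_0$ ranges over the subsets of $E$ on which $p$ is injective, the basic opens $\widehat{E_0}=\{\lambda\in\beta(E):E_0\text{ is }\lambda\text{-large}\}$ are all contained in $\beta(E/T)$ and cover it---any $\nu\in\beta(E/T)$ lies in $\widehat{E_0}$ for a $\nu$-large injectivity set $E_0$, by \Cref{def:principal-over}. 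Under the canonical homeomorphisms $\widehat{E_0}\cong\beta(E_0)$ and $\widehat{p(E_0)}\cong\beta(p(E_0))$, the restriction $q\res{\widehat{E_0}}$ is exactly the homeomorphism $\beta(E_0)\xrightarrow{\sim}\beta(p(E_0))$ induced by the bijection $p\res{E_0}$; since $\widehat{p(E_0)}$ is open in $\beta(T)$, this exhibits $q$ as étale.

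For (b), \Cref{def:principal-over}(iii) writes each $\nu\in\beta(E/T)$ as $\sum_{x:\lambda}\delta_{e_x}$ with $(e_x)\in\int_{x:\lambda}E_x$, and, since $p$ is the coprojection of $E=\coprod_{x}E_x$, the defining formula for the sum ultrafilter gives $p_*(\nu)=\int_{x:\lambda}\delta_{p(e_x)}=\int_{x:\lambda}\delta_x=\lambda$. Hence $q^{-1}(\mu)=\{\sum_{x:\mu}\delta_{e_x}:(e_x)\in\int_{x:\mu}E_x\}$, and it remains to see that $(e_x)\mapsto\sum_{x:\mu}\delta_{e_x}$ is injective. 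I would use that $B\subseteq E$ is $\bigl(\sum_{x:\mu}\delta_{e_x}\bigr)$-large exactly when $e_x\in B$ for $\mu$-all $x$; applied to the images of the sections $x\mapsto e_x$ and $x\mapsto e'_x$, this shows both images are $\nu$-large whenever $\nu\coloneqq\sum_{x:\mu}\delta_{e_x}=\sum_{x:\mu}\delta_{e'_x}$, hence so is their intersection, on which $p$ is injective and on which the two sections therefore agree (from $e_x=e'_{x'}$ one gets $x=p(e_x)=p(e'_{x'})=x'$); so $e_x=e'_x$ for $\mu$-all $x$.

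For (c), $\delta$ is the open embedding onto the discrete, dense subspace of principal ultrafilters of $\beta(T)$, so $\delta^*$ admits a right adjoint $\delta_*$, and pulling $q$ back along $\delta$ yields the étale map over $T$ whose fibre over $a$ is $q^{-1}(\delta_a)=\int_{x:\delta_a}E_x=E_a$, i.e.\ $\delta^*q\cong p$. It then suffices to check that the resulting unit $q\to\delta_*\delta^*q\cong\delta_*p$ is an isomorphism, and since both sides are sheaves it is enough to do so on stalks: by (a) and (b) the stalk of the étale map $q$ at $\mu$ is its fibre $\int_{x:\mu}E_x$, while a filtered-colimit computation over the basic neighbourhoods $\widehat{A}$ of $\mu$ (using $\widehat{A}\cap\delta(T)=A$) identifies $(\delta_*p)_\mu$ with $\colim_{A\ \mu\text{-large}}\prod_{x\in A}E_x=\int_{x:\mu}E_x$ compatibly with the comparison map; alternatively one compares sections over each $\widehat{A}$ directly.

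The computation in (b) is the mathematical heart of the statement, but it is routine once \Cref{def:principal-over} is unwound. The part I expect to require the most care is (c): fixing the precise sense in which $q$ is ``the pushforward of $p$ along $\delta$'', and verifying that the comparison map is a genuine isomorphism of étale spaces and not merely a fibrewise bijection.
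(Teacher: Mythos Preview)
Your proof is correct, and part~(a) matches the paper verbatim. The organisation of (b) and (c) differs from the paper's, though. The paper proves the pushforward identification \emph{first}, by comparing sections: it observes that a section of $q$ over a basic open $\beta(T_0)$ is determined by its restriction to the principal ultrafilters (this is where the paper uses that $\beta(E/T)$ is Hausdorff), and conversely any section $e:T_0\to E$ extends to $\mu\mapsto e_*(\mu)$; hence $\Gamma(\beta(T_0),q)\cong\Gamma(T_0,p)$, which is exactly the defining condition for $q\cong\delta_*p$ on a basis. The fibre computation then falls out as the filtered colimit $\colim_{T_0}\prod_{x\in T_0}E_x$.

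You instead compute the fibre $q^{-1}(\mu)$ directly via the sum-ultrafilter description and then verify $q\cong\delta_*p$ by a stalk argument. This is a perfectly good route---your injectivity argument in (b) is correct, and the stalk computation in (c) is the standard one---but it trades the one-line Hausdorffness trick for a hands-on combinatorial check. Your closing remark ``alternatively one compares sections over each $\widehat{A}$ directly'' is in fact precisely what the paper does, and is arguably the cleaner way to finish: it avoids having to argue that the unit map is the canonical identification on stalks (your ``compatibly with the comparison map'' is the only place where you are slightly imprecise).
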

\begin{proof}
    We first show that $q$ is étale. Let $\mu$ be in $\beta(E/T)$, by hypothesis we can find a $\mu$-large subset $E_0$ on which $p$ is injective, then $\beta(E_0)\subseteq\beta(E)$ is an open contained in $\beta(E/T)$ and $p$ induces a homeomorphism $\beta(E_0)\to\beta(p(E))$.
    
    The fiber of $q$ over the principal ultrafilter $\delta_x$ is given by the ultrafilters on $E$ that are principal on a element of $E_x$, so we can identify the fiber of $q$ above $\delta_x$ with $E_x$. 

    Let $T_0\subseteq T$, we look now at the sections of $q$ on $\beta(T_0)$. The restriction of such a section to the principal ultrafilters gives a section $e : T_0\to E$ (by the description above) and as $\beta(E/T)$ is Hausdorff the section is determined by this $e$. Conversely, for an arbitrary section $e :T_0 \to E$, the map sending $\mu\in\beta(T_0)$ on $e_*(\mu)$ induces a section of $q$ on $\beta(T_0)$ ($p$ is injective on $e(T_0)$ that is $e_*(\mu)$-large). Hence, sections of $q$ on the basic open $\beta(T_0)$ correspond to sections of $p$ on $T_0 = \delta^{-1}(\beta(T_0))$, showing that $q$ is indeed the pushforward of $p$ along $\delta$.

    For the last claim, the fiber of $q$ above an arbitrary ultrafilter $\mu\in\beta(T)$ is given by the filtered colimit of the set of sections $q$ on the $\beta(T_0)$ for $T_0$ a $\mu$-large subset, and sections of $q$ on $\beta(T_0)$ correspond with sections of $p$ on $T_0$ that are elements of $\prod_{x:T_0}E_x$.
\end{proof}

\begin{lem}\label{lem:étale-ucvg}
    Let $A$ be a topological Hausdorff space and $K$ be a compact subset of $A$. Then for any sheaf on $A$, a section on $K$ can be extended to a section on an open containing $K$.
\end{lem}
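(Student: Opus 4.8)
The plan is to work with the étale space $\pi\colon E\to A$ of the sheaf, so that a section over $K$ is a continuous section $s\colon K\to E$ of $\pi$, and to extend $s$ to a section over an open neighbourhood of $K$. The key structural input is that for a local homeomorphism $\pi$ the ``equalizer'' $\{a\in V:\sigma(a)=\tau(a)\}$ of any two continuous sections $\sigma,\tau$ over a common open $V$ is itself open: near a point of agreement one chooses an evenly-covered open of $E$ and uses injectivity of $\pi$ on it. This is what will let us glue local sections once their overlaps are arranged correctly.

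First I would produce local extensions: for each $x\in K$, since $\pi$ is a local homeomorphism, choose an open $V_x\ni s(x)$ in $E$ with $\pi|_{V_x}\colon V_x\xrightarrow{\ \sim\ }\pi(V_x)$ and set $\sigma_x:=(\pi|_{V_x})^{-1}$, a section over the open set $U_x:=\pi(V_x)\ni x$. Using continuity of $s$, the set $s^{-1}(V_x)$ is open in $K$; writing it as $K\cap O_x$ with $O_x\subseteq U_x$ open in $A$ and unwinding $\pi\circ s=\mathrm{incl}$ shows that $\sigma_x$ restricts to $s$ on $K\cap O_x$. By compactness of $K$ extract a finite subcover $O_1,\dots,O_n$ with sections $\sigma_1,\dots,\sigma_n$, each agreeing with $s$ on its trace on $K$; then each $D_{ij}:=\{a\in O_i\cap O_j:\sigma_i(a)=\sigma_j(a)\}$ is open and contains $K\cap O_i\cap O_j$.

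The heart of the argument is a shrinking step. Using that $K$ is compact and $A$ Hausdorff, I would first shrink $O_1,\dots,O_n$ to opens $P_1,\dots,P_n$ with $P_i\subseteq O_i$, $K\subseteq\bigcup_i P_i$ and $\overline{P_i}\cap K\subseteq O_i$: for each $x\in K$ separate $x$ from the compact set $K\setminus O_{i(x)}$ by disjoint opens, take a finite subcover of the resulting cover of $K$, and use that closure commutes with finite unions. Then set
\[W_i:=P_i\setminus\bigcup_{k}\overline{(P_i\cap P_k)\setminus D_{ik}}.\]
Each $W_i$ is open with $W_i\subseteq O_i$, and by construction $W_i\cap W_j\subseteq D_{ij}$, so the $\sigma_i|_{W_i}$ agree on overlaps and glue to a section $\tilde s$ over $W:=\bigcup_i W_i$. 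The place where the hypotheses are really used is checking $K\subseteq\bigcup_i W_i$: for $x\in K\cap P_i$ and any $k$, either $x\notin\overline{P_k}$, so $x$ avoids $\overline{(P_i\cap P_k)\setminus D_{ik}}\subseteq\overline{P_k}$, or $x\in\overline{P_k}\cap K\subseteq O_k$, in which case $\sigma_i$ and $\sigma_k$ are both defined and continuous near $x$ and agree at $x$ (because $x\in K\cap O_i\cap O_k\subseteq D_{ik}$), so an open neighbourhood of $x$ lies in $\{\sigma_i=\sigma_k\}$ and again misses $\overline{(P_i\cap P_k)\setminus D_{ik}}$; hence $x\in W_i$. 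Finally $\tilde s|_K=s$ since each $\sigma_i$ restricts to $s$ on $W_i\cap K\subseteq O_i\cap K$ and $K=\bigcup_i(W_i\cap K)$.

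The main obstacle is precisely this gluing step: because $A$ is only assumed Hausdorff (not regular), one cannot naively thicken the compact pieces covering $K$ inside the $O_i$, and the disagreement loci $(P_i\cap P_k)\setminus D_{ik}$ could a priori accumulate onto $K$. The resolution is to combine the finite shrinking (which gives $\overline{P_i}\cap K\subseteq O_i$ and only finitely many constraints) with the open-equalizer property, which together push the bad loci off $K$. Everything else — the local homeomorphism argument of the second paragraph and the compactness reductions — is routine.
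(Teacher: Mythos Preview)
Your argument is correct. The paper does not actually prove this lemma: it simply records it as a standard sheaf-theoretic fact and points to an external reference, so there is nothing to compare at the level of strategy. Your direct argument via the \'etale space is one of the standard proofs of this result; the two nontrivial points --- that equalizers of sections of a local homeomorphism are open, and the finite shrinking $P_i\subseteq O_i$ with $\overline{P_i}\cap K\subseteq O_i$ obtained by separating each $x\in K$ from the compact $K\setminus O_{i(x)}$ --- are handled correctly, and the verification that $K\subseteq\bigcup_i W_i$ using the case split on $x\in\overline{P_k}$ goes through as you describe. One cosmetic remark: in Case~2 you can shortcut the last step by noting that $(P_i\cap P_k)\setminus D_{ik}$ is disjoint from the open set $D_{ik}$, hence so is its closure, and $x\in D_{ik}$.
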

\begin{proof}
    This is a standard sheaf-theoretical fact; a proof can be found, for example, in \cite[Proposition 10.1]{extsheaf}.
\end{proof}

We have now all we need to prove the \Cref{Thm:étale-ucvg}.

\begin{proof}[Proof of \Cref{Thm:étale-ucvg}]

The direct implication is straightforward. Suppose $p$ is étale and $\mu\cgeq p(e)$. We take a homeomorphism $U\to p(U)$ from an open neighborhood $U$ of $e$ and denote $\xi$ its inverse. Any $\nu\cgeq e$ above $\mu$ is in the open $U$ and so $\nu = \xi_*(p_*(\xi)) = \xi_*(\mu)$ showing the uniqueness of the lifting. And by continuity $\xi_*(\mu)$ converges indeed to $\xi(p(e)) = e$, and $\xi_*(\mu)\in\beta(E/T)$ as $p$ is injective on $U$.

The converse is more demanding. Suppose $p : E \to T$ satisfies \ref{cond1} and \ref{cond2} and fix some $e\in E$, we want to construct a homeomorphism from an open neighborhood of $e$ onto a neighborhood of $a \coloneq p(e)$.
The hypothesis \ref{cond1} gives a set section $\sigma_a : \beta_a(T)\to\beta(E)$, where $\beta_a(T)$ is the subspace of $\beta(T)$ of ultrafilters converging to $a$, mapping $\mu\cgeq a$ to the unique $\sigma_a(\mu)$ over $\mu$ converging to $e$, and the hypothesis \ref{cond2} ensures that actually $\sigma_a$ factorizes through $\beta(E/T)$.
\[\begin{tikzcd}
    && {\beta(E/T)} \\
    {\beta_a(T)} && {\beta(T)} \\
    \arrow["p_*"', from=1-3, to=2-3]
    \arrow[color={blue}, "\sigma_a"{pos=0.2}, curve={height=-15pt}, dashed, from=2-1, to=1-3]
    \arrow[hook, from=2-1, to=2-3]
\end{tikzcd}\]
\begin{nclaim}
    This section is continuous.
\end{nclaim}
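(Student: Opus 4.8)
The plan is to apply the ultrafilter criterion for continuity, \Cref{charact:map}, to the map $\sigma_a\colon\beta_a(T)\to\beta(E/T)$. Recall that $\beta_a(T)$ and $\beta(E/T)$ carry the subspace topologies induced from the compact Hausdorff spaces $\beta(T)$ and $\beta(E)$, that convergence of an ultrafilter to a point of a subspace is detected in the ambient space, and that in a compact Hausdorff space $K$ an ultrafilter $\Lambda$ on the set of points converges precisely to $\int_{x\colon\Lambda}x$. After unwinding, continuity of $\sigma_a$ amounts to the assertion: for every ultrafilter $\Lambda$ on the set $\beta_a(T)$ such that $\mu\coloneqq\int_{\mu'\colon\Lambda}\mu'$ again lies in $\beta_a(T)$, one has $\int_{\mu'\colon\Lambda}\sigma_a(\mu')=\sigma_a(\mu)$ in $\beta(E)$.

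To prove this I would set $\nu\coloneqq\int_{\mu'\colon\Lambda}\sigma_a(\mu')\in\beta(E)$ and verify two things. First, $p_*(\nu)=\mu$: a subset $B\subseteq T$ is $p_*(\nu)$-large exactly when $p^{-1}(B)$ is $\nu$-large, hence exactly when $p^{-1}(B)$ is $\sigma_a(\mu')$-large for $\Lambda$-many $\mu'$, hence (using $p_*\sigma_a(\mu')=\mu'$) exactly when $B$ is $\mu'$-large for $\Lambda$-many $\mu'$, that is, exactly when $B$ is $\mu$-large. Second, $\nu\cgeq e$: if $U$ is an open of $E$ containing $e$, then every $\sigma_a(\mu')$ converges to $e$, so $U$ is $\sigma_a(\mu')$-large for all $\mu'$, in particular for $\Lambda$-many $\mu'$, hence $U$ is $\nu$-large; since this holds for every open $U\ni e$, the ultrafilter $\nu$ converges to $e$.

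Thus $\nu$ is an ultrafilter on $E$ lying over $\mu$ and converging to $e$. By hypothesis \ref{cond1} there is exactly one such ultrafilter, and by construction it is $\sigma_a(\mu)$; hence $\nu=\sigma_a(\mu)$, as required, and \Cref{charact:map} then yields the continuity of $\sigma_a$. Hypothesis \ref{cond2} plays no role in this step: it serves only to guarantee that $\sigma_a$ takes values in $\beta(E/T)$ at all.

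I expect the main difficulty to be purely organizational: one must handle with care the ``second-order'' ultrafilter $\Lambda$ living on the space of ultrafilters $\beta_a(T)$, and the reduction of convergence in the subspaces $\beta_a(T)\subseteq\beta(T)$ and $\beta(E/T)\subseteq\beta(E)$ to convergence in the ambient compact Hausdorff spaces. Once that reduction is in place, the defining property of $\sigma_a$ together with the uniqueness clause \ref{cond1} close the argument with no further computation.
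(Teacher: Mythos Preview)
Your proposal is correct and follows essentially the same route as the paper: apply \Cref{charact:map}, set $\nu=\int_{\mu':\Lambda}\sigma_a(\mu')$, check that $p_*(\nu)=\mu$ and $\nu\cgeq e$, and invoke the uniqueness in \ref{cond1}. The paper's version is terser (it cites continuity of $p$ for the first check rather than unwinding the definition of pushforward), but the argument is the same; your remark that \ref{cond2} is not used here is also accurate.
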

\begin{proof}[Proof of the claim]
    We use \Cref{charact:map}.
    If $\mu\in\beta(\beta_a(T))$ then $\mu=(\nu)_{\nu:\mu}$ and $\mu$ converge to $\int_{\nu:\mu}\nu$ in $\beta_a(T)$. Then $\int_{\nu:\mu}\sigma_a(\nu)$ is above $\int_{\nu:\mu}\nu$ (as $p$ is continuous) and converges to $e$ (as each $\sigma_a(\nu)\cgeq e$), so by \ref{cond1} we get $\sigma_a(\int_{\nu:\mu}\nu) = \int_{\nu:\mu}\sigma_a(\nu)$.
\end{proof}

We would like to glue together all these lifting $(\sigma_a(\mu))$ to get a local section of $p$ around $a$; the crucial ingredient is given by \Cref{lem:étale-ucvg}, we apply it to the sheaf $\beta(E/T)\stackrel{\text{ét}}\to\beta(T)$ of \Cref{lem:principal-over} and the closed subset $\beta_a(T)$. 
We get a continuous section $\zeta$ an open $V$ of $\beta(T)$, such that any $\mu$ converging to $a$ is in $V$ and satisfies $\zeta(\mu)\cgeq e$.
\[\begin{tikzcd}
    && {\beta(E/T)} \\
    {\beta_a(T)} && {\beta(T)} \\
    & V & {}\\
    \arrow["\text{ét}", "p_*"', from=1-3, to=2-3]
    \arrow[color={blue}, "\sigma_a"{pos=0.2}, curve={height=-15pt}, dashed, from=2-1, to=1-3]
    \arrow[hook, from=2-1, to=2-3]
    \arrow[color={blue}, hook, from=2-1, to=3-2]
    \arrow[color={blue}, "\zeta"{pos=0.2}, curve={height=-6pt}, dashed, from=3-2, to=1-3]
    \arrow[color={blue}, "{\text{open}}"{description}, hook, from=3-2, to=2-3]
\end{tikzcd}\]

To get a section on $T$ we pullback (in $\Set$) along $\delta : T\to\beta(T)$. We get the following diagram
\[\begin{tikzcd}
    && {\beta(E/T)} \\
    {\beta_a(T)} && {\beta(T)} & E \\
    & V & {} & T \\
    && {\delta^*V}
    \arrow["\text{ét}", "p_*"', from=1-3, to=2-3]
    \arrow[color={blue}, "\sigma"{pos=0.2}, curve={height=-15pt}, dashed, from=2-1, to=1-3]
    \arrow[hook, from=2-1, to=2-3]
    \arrow[color={blue}, hook, from=2-1, to=3-2]
    \arrow["\delta"{description, pos=0.7}, dotted, from=2-4, to=1-3]
    \arrow["\lrcorner"{anchor=center, pos=0.125, rotate=-90}, draw=none, from=2-4, to=3-3]
    \arrow[from=2-4, to=3-4]
    \arrow[color={blue}, "\zeta"{pos=0.2}, curve={height=-6pt}, dashed, from=3-2, to=1-3]
    \arrow[color={blue}, "{\text{open}}"{description}, hook, from=3-2, to=2-3]
    \arrow["\delta"{description, pos=0.7}, dotted, from=3-4, to=2-3]
    \arrow["\lrcorner"{anchor=center, pos=0.125, rotate=135}, draw=none, from=4-3, to=2-3]
    \arrow[color={blue}, "\xi"{pos=0.2}, curve={height=-6pt}, dashed, from=4-3, to=2-4]
    \arrow["\delta"{description, pos=0.7}, dotted, from=4-3, to=3-2]
    \arrow[color={blue}, hook, from=4-3, to=3-4]
\end{tikzcd}\]
where $\delta^*(V)$ is the subset of $x\in T$ such that $\delta_x\in V$, and $\xi : \delta^*V\to E$ is the unique map such that $\zeta(\delta_x) = \delta_{\xi(x)}$.

As $\zeta(\delta_a) = \delta_e$ we get $\xi(a) = e$, so $\xi$ seems to be a good candidate for our local homeomorphism. However, except in trivial cases, $\delta : T\to\beta(T)$ is not continuous, so there is no reason for $\delta^*(V)$ to be open neither for $\xi$ to be continuous.
We thus restrict $\xi$ to $W$ the subset of points of $\delta^*V$ where $\xi$ is locally defined and sequentially continuous \ie
\[W\coloneqq\{w\in \dom(\xi) \ |\ (\forall \mu\cgeq w)\ \mu\in\dom(\xi) \text{ and } \xi_*(\mu)\cgeq\xi(w) \}.\]

\begin{nclaim}
    $W$ is an open neighborhood of $a$ and $\xi$ is a continuous open map on $W$.
\end{nclaim}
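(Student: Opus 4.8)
The plan is to establish the four facts in turn: $a\in W$; $W$ open; $\xi|_W$ continuous; $\xi|_W$ open. Throughout I use that $\xi$ is a set-theoretic section of $p$ (for $x\in\dom(\xi)$ the point $\zeta(\delta_x)=\delta_{\xi(x)}$ lies over $\delta_x$, so $p(\xi(x))=x$) and that $\xi(a)=e$ (because $\zeta(\delta_a)=\sigma_a(\delta_a)$ is the unique lift of $\delta_a$ at $e$, namely $\delta_e$).

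\emph{That $a\in W$.} Let $\mu\cgeq a$, so $\mu\in\beta_a(T)\subseteq V$. As $V$ is open there is a basic open $\beta(T_0)$ with $\mu\in\beta(T_0)\subseteq V$, and since $\delta^*\beta(T_0)=T_0$ this gives $T_0\subseteq\dom(\xi)$ with $T_0$ $\mu$-large; hence $\mu\in\dom(\xi)$. For $\xi_*\mu\cgeq\xi(a)=e$: the ultrafilter $\delta_*\mu$ on $\beta(T)$ converges to $\mu$ and is $V$-large (as $\mu$ is $\dom(\xi)$-large and $\delta(\dom(\xi))\subseteq V$), so it is an ultrafilter of the subspace $V$ converging to $\mu$; continuity of $\zeta$ and \Cref{charact:map} give $\zeta_*(\delta_*\mu)\cgeq\zeta(\mu)=\sigma_a(\mu)$ in $\beta(E)$. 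But $\zeta_*(\delta_*\mu)=\delta_*(\xi_*\mu)$ also converges to $\xi_*\mu$ in the compact Hausdorff space $\beta(E)$, so by uniqueness of limits $\xi_*\mu=\sigma_a(\mu)\cgeq e$, and $a\in W$.

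\emph{That $W$ is open — the main obstacle.} By \Cref{charact:open} it is enough, given $w_0\in W$ and $\mu_0\cgeq w_0$, to show $W$ is $\mu_0$-large. From $w_0\in W$ (case $\mu=\mu_0$) we have that $\dom(\xi)$ is $\mu_0$-large and $\xi_*\mu_0\cgeq\xi(w_0)$; so $\mu_0$-almost all $x$ lie in $\dom(\xi)$, and we must show $\mu_0$-almost all of them lie in $W$. Suppose not; then $\mu_0$-almost every such $x$ admits $\nu_x\cgeq x$ witnessing $x\notin W$, and since $\mu_0$ is an ultrafilter one of two cases holds $\mu_0$-almost everywhere. \emph{First}, if $\dom(\xi)$ is not $\nu_x$-large for $\mu_0$-almost all $x$, set $\nu:=\int_{x:\mu_0}\nu_x$; the composition axiom for the relational $\beta$-module of $T$ (\Cref{def:rel-beta-mod}) applied to $w_0\cleq\mu_0$ and $x\cleq\nu_x$ gives $\nu\cgeq w_0$, whence $w_0\in W$ makes $\dom(\xi)$ $\nu$-large, that is $\nu_x$-large for $\mu_0$-almost all $x$ — a contradiction. \emph{Second}, and this is the delicate case, suppose $\mu_0$-almost every $x$ has $\nu_x\cgeq x$ with $\dom(\xi)$ $\nu_x$-large but $\xi_*\nu_x\not\cgeq\xi(x)$. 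Since $\nu_x\cgeq p(\xi(x))$, condition \ref{cond1} yields the unique lift $\lambda_x\cgeq\xi(x)$ of $\nu_x$, and \ref{cond2} (with that uniqueness) makes $\lambda_x$ principal over $T$, so $\lambda_x=\sum_{y:\nu_x}\delta_{f_x(y)}$ with $f_x(y)$ in the fibre of $p$ over $y$, by \Cref{def:principal-over}; moreover $\xi_*\nu_x$ is also principal over $T$ — $p$ is injective on the $\xi_*\nu_x$-large set $\xi(\dom(\xi))$ — and equals $\sum_{y:\nu_x}\delta_{\xi(y)}$, while $\lambda_x\neq\xi_*\nu_x$ because only $\lambda_x$ converges to $\xi(x)$. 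Now put $\nu:=\int_{x:\mu_0}\nu_x\cgeq w_0$ and $\lambda:=\int_{x:\mu_0}\lambda_x$. Then $w_0\in W$ gives $\xi_*\nu=\int_{x:\mu_0}\xi_*\nu_x\cgeq\xi(w_0)$, while the composition axiom for the relational $\beta$-module of $E$ applied to $\xi(w_0)\cleq\xi_*\mu_0=\int_{x:\mu_0}\delta_{\xi(x)}$ and $\xi(x)\cleq\lambda_x$ gives $\lambda\cgeq\xi(w_0)$; as $\lambda$ and $\xi_*\nu$ are both lifts of $\nu$ converging to $\xi(w_0)$, condition \ref{cond1} forces $\lambda=\xi_*\nu=\sum_{y:\nu}\delta_{\xi(y)}$. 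Unwinding $\lambda=\int_{x:\mu_0}\lambda_x$ with the associativity of sum ultrafilters (\Cref{cons:ass-unit-sum}) and testing largeness against the image $\xi(\dom(\xi))\subseteq E$, one gets $f_x(y)=\xi(y)$ for $\nu_x$-almost all $y$ and $\mu_0$-almost all $x$, i.e.\ $\lambda_x=\xi_*\nu_x$ for $\mu_0$-almost all $x$, contradicting $\lambda_x\neq\xi_*\nu_x$. Hence $W$ is $\mu_0$-large and $W$ is open.

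\emph{That $\xi|_W$ is continuous and open.} Continuity is immediate from \Cref{charact:map}: an ultrafilter of $W$ converging to $w\in W$ is $\dom(\xi)$-large and $\cgeq w$ in $T$, so by the definition of $W$ its $\xi$-pushforward converges to $\xi(w)$. For openness, by \Cref{charact:openmap} it suffices to lift an arbitrary $\mu\cgeq\xi(w)$ with $w\in W$: continuity of $p$ makes $\rho:=p_*\mu\cgeq w$, and since $W$ is now known to be open, $\rho$ is $W$-large, hence an ultrafilter of $W$ with $\rho\cgeq w$; then $\xi_*\rho$ is a lift of $\rho$ converging to $\xi(w)$ (again by $w\in W$), and so is $\mu$, whence $\xi_*\rho=\mu$ by \ref{cond1}. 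This proves the claim; beyond it, $\xi|_W$ is also injective (being a restriction of the section $\xi$), hence an open embedding, so $p$ restricts to a homeomorphism from the open set $\xi(W)\ni e$ onto $W$, which is the local homeomorphism at $e$ needed to finish the theorem.
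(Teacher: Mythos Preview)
Your proof is correct and follows essentially the same strategy as the paper's: the four items are proved by the same mechanisms (definition of $W$ for continuity, continuity of $\zeta$ for $a\in W$, and the unique-lift condition \ref{cond1} applied to $\int_{x:\mu_0}\nu_x$ for openness of $W$). Two minor organizational differences are worth noting: the paper uses \Cref{prop:ultraquantification}\ref{prop:ultraquantification2} to swap the quantifiers $(\forall x:\mu_0)(\forall\nu\cgeq x)$ directly, whereas you argue by contradiction and case-split on which clause of the $W$-membership fails---the underlying computation is identical. Also, you prove $W$ open \emph{before} the openness of $\xi|_W$, which lets you know $p_*\mu$ is $W$-large when lifting; the paper does these in the reverse order and is correspondingly terser there. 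Finally, your ``testing largeness against $\xi(\dom(\xi))$'' step makes explicit the reason one can pass from $\int_{x:\mu_0}\lambda_x=\int_{x:\mu_0}\xi_*\nu_x$ to $\lambda_x=\xi_*\nu_x$ for $\mu_0$-almost all $x$ (namely that $\xi(\dom(\xi))$ meets each fibre $E_y$ in the single point $\xi(y)$), a point the paper leaves implicit.
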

\begin{proof}[Proof of the claim] \leavevmode
    \begin{itemize}
    
    \item \ul{$\xi$ is continuous and open:} $W$ has been chosen to make $\xi$ satisfies the hypothesis of \Cref{charact:map} on it. We use \Cref{charact:openmap} to see that $\xi$ is open: if $\nu\cgeq \xi(x)$ then by hypothesis $\nu = \xi_*(p_*(\nu))$.
    
    \item \ul{$W$ contains $a$:} let $\mu\cgeq a$, then $(\delta_x)_{x:\mu}\cgeq \mu$ and $\mu\in V$ open, so $(\delta_x)_{x:\mu}\in V$ \ie $\xi$ is $\mu$-defined, moreover $\xi_*(\mu) = \int_{x:\mu}\delta_{\xi(x)} = \int_{x:\mu}\zeta(x) = \zeta(\mu) \cgeq e = \xi(a)$.
    
    \item \ul{$W$ is open:} We take $\mu\cgeq w$ with $w\in W$ and we want to show that $\mu\in W$, \ie that 
    \[(\forall x:\mu)(\forall \nu\cgeq x)\ \nu\in\dom(\xi) \land \xi_*(\nu)\cgeq\xi(w),\]
    and by \Cref{prop:ultraquantification}.\ref{prop:ultraquantification2} this is equivalent to
    \[(\forall (\nu_x\cgeq x)_{x:\mu})(\forall x:\mu)\ \nu_x\in\dom(\xi) \land \xi_*(\nu_x)\cgeq\xi(w).\]
    Let fix some $(\nu_x\cgeq x)_{x:\mu}$.
    First, as $\int_{s:\mu}\nu_s\in\dom(\xi)$, then $\nu_s\in\dom(\xi)$ for $\mu$-all $x$. We now show that $(\forall x:\mu)\ \xi_*(\nu_x)\cgeq\xi(x)$;
    for any $x$ in $\mu$, let us denote by $\sigma_x(\nu_x)\cgeq\xi(x)$ the unique lift of $\nu_x\cgeq x$ given by \ref{cond1}. On the one hand, as $\xi_*(\mu)\cgeq \xi(w)$, the ultrafilter $\int_{x:\mu}\sigma_x(\nu_x)$ converges to $\xi(w)$  and lies above $\int_{x:\mu}\nu_x$. On the other hand, as $\xi_*(\int_{x:\mu}\nu_x) \cgeq \xi(w)$, this is also the case of the ultrafilter $\int_{x:\mu}\xi_*(\nu_x)$. Hence, by \ref{cond1}, $\int_{x:\mu}\sigma_x(\nu_x) = \int_{x:\mu}\xi_*(\nu_x)$, and so, for $\mu$-all $x$, $\xi_*(\nu_x) = \sigma_x(\nu_x)\cgeq\xi(x)$. \qedhere
    \end{itemize}
\end{proof}

This shows together that $\xi(W)$ is an open and $\xi(W) \stackrel{p}\to W$ is a homeomorphism of inverse $\xi$. As $\xi(a) = e$ this exhibits the desired local homeomorphism and shows that $p$ is étale.
\end{proof}

\section{Ultracategories}\label{sec:ultracat}

In this section we fix the definition of ultracategories we will work with. This section is not necessary for our final result and is needed only for examples.

Makkai's original aim for introducing ultracategories was to prove conceptual completeness of coherent logic: whereas the category of models of coherent theory is not enough to recover the theory, its ultracategory of models is.

We proposed in the introduction a topological understanding of it: the ultraproduct operation is a categorification of the ultraconvergence in compact Hausdorff spaces, and Makkai's result is a categorification of the fact that a (coherent) topological space can be recovered from its ultraconvergence relation. Following this intuition, Lurie noticed that ultracategories decategorify in $\beta$-algebras (or equivalently in compact Hausdorff spaces). This suggests a definition of ultracategories as $\bb$-pseudoalgebras and Hamad showed in \cite{Hamad} that Lurie's ultracategories are actually normal colax algebras for $\bb$ (while $\bb$-pseudoalgebras are Lurie's ultracategories satisfying one more condition, see \Cref{rmk:strong-weak}).

We also proposed another, more algebraic, justification: the ultrastructure of the category of models of a pretopos arises naturally by looking at the pretopos endofunctors on $\Set$. A slight extension of a result from Joyal \cite{Joyal} gives an equivalence $\Pretop(\Set^S,\Set^T)\simeq \Ind(\UMat(S,T))$, where $\UMat(S,T) = \bb(S)^T$ is the category of ultramatrices defined in \cite[Section 5.2]{GarnerUF}. Concretely, a functor from $\Set^S$ to $\Set^T$ that preserves the operations of first-order logic, is canonically a filtered (large) colimit of ultraproducts. Hence, up to filtered colimits, the ultraproduct operations are ‘‘all we can do’’ when manipulating models of a coherent theory. 

On account of the above, it would be suitable to introduce ultracategories either by defining the pseudomonad structure on $\bb$ and computing its pseudoalgebras, or by looking at $\Pretop(\Set^S,\Set^T)$ and going through ultramatrices. However, we will let the algebraic study of ultracategories for future work, and simply deduce the definition of ultracategory by looking carefully at the behavior of the ultraproduct operations in $\Set$.

\begin{cons}
     Ultraproducts of sets behave like a tensor operation: we have associators and a unitor
    \begin{equation}\label{eq:ass-unit}
    \begin{tikzcd}
        & {\Set^{\mu}} \\
        {\Set^{\sum_{s:\mu}\nu_s}} && {\Set}
        \arrow["{\int_{\mu}}", from=1-2, to=2-3]
        \arrow["{(\int_{\nu_s})}", from=2-1, to=1-2]
        \arrow[""{name=0, anchor=center, inner sep=0}, "{\int_{\sum_{s:\mu}\nu_s}}"', from=2-1, to=2-3]
        \arrow["\wr", "{a_{(\nu_s)_{s:\mu}}}"', shorten <=7pt, shorten >=7pt, Rightarrow, from=0, to=1-2]
    \end{tikzcd} \hspace{30pt} \begin{tikzcd}[column sep=small]
        & {\color{white}\Set^{\mu}\color{black}} \\
        {\Set^{\ptuf}} && {\Set}
        \arrow[""{name=0, anchor=center, inner sep=0}, "{\int_{\ptuf}}"'{pos=0.5}, from=2-1, to=2-3]
        \arrow["\sim", curve={height=-40pt}, from=2-1, to=2-3]
        \arrow["\wr", "u"', shift left=0.5, shorten <=7pt, shorten >=7pt, Rightarrow, from=0, to=1-2]
    \end{tikzcd}
    \end{equation}
    where we used, implicitly, the curryfication isomorphism from \Cref{cons:curry} to define the associator. 
    \[\Set^{\sum_{s:\mu}\nu_s}\simeq\int_{s:\mu}\Set^{\nu_s} \stackrel{(\int_{\nu_s})}\longrightarrow\int_{s:\mu}\Set=\Set^{\mu}\]
    Note that the associator corresponds to a currification: for $(A_{s,t})_{(s,t):\sum_{s:\mu}\nu_s}$ an ultrafamily of sets, the associator gives a bijection
    $\int_{\sum_{s:\mu}\nu_s}A_{s,t} \stackrel{\sim}\to\int_{s:\mu}\int_{t:\nu_s}A_{s,t}$.
    Moreover, these natural isomorphisms satisfy the usual three following coherences.
    \begin{equation}\label{eq:ass-coh}
    \begin{tikzcd}
        {\int_{(s,t,u):\mu\otimes\nu_s\otimes\lambda_{s,t}}A_{s,t,u}} & {\int_{s:\mu}\int_{(t,u):\nu_s\otimes\lambda_{s,t}}A_{s,t,u}} \\
        {\int_{(s,t):\mu\otimes\nu_s}\int_{u:\lambda_{s,t}}A_{s,t,u}} & {\int_{s:\mu}\int_{t:\nu_s}\int_{u:\lambda_{s,t}}A_{s,t,u}}
        \arrow["a", from=1-1, to=1-2]
        \arrow["a", from=1-1, to=2-1]
        \arrow["a", from=1-2, to=2-2]
        \arrow["a", from=2-1, to=2-2]
    \end{tikzcd}
    \end{equation}
    
    \begin{equation}\label{eq:unit-coh}
    \begin{tikzcd}
        {\int_{s:\mu}A_s} & {\int_{\ptuf}\int_{s:\mu}A_s} \\
         & {\int_{s:\mu}A_s}
        \arrow["u", from=1-1, to=1-2]
        \arrow[equals, from=1-1, to=2-2]
        \arrow["a", from=1-2, to=2-2]
    \end{tikzcd} \hspace{30pt}
    \begin{tikzcd}
        {\int_{s:\mu}A_s} &  \\
        {\int_{s:\mu}\int_{\ptuf}A_s} & {\int_{s:\mu}A_s}
        \arrow["u", from=1-1, to=2-1]
        \arrow[equals, from=1-1, to=2-2]
        \arrow["a", from=2-1, to=2-2]
    \end{tikzcd}
    \end{equation}

    There are also \emph{reindexing operations} that give to the ultraproduct an \emph{additive} flavor (in the sense that there are diagonal maps $A\to A^{\mu}$).
    For any map of ultrafilters $f : \lambda\to\kappa$, there is a natural transformation $f^{\#} : \int_{\kappa}\Rightarrow\int_{\lambda}\circ f^*$
    \begin{equation}\label{eq:reindexing}
    \begin{tikzcd}
    	{\Set^{\kappa}} && {\Set^{\lambda}} \\
    	& \Set
    	\arrow["{f^*}", from=1-1, to=1-3]
    	\arrow[""{name=0, anchor=center, inner sep=0}, "{\int_{\kappa}}"', from=1-1, to=2-2]
    	\arrow[""{name=1, anchor=center, inner sep=0}, "{\int_{\lambda}}", from=1-3, to=2-2]
    	\arrow["{f^{\#}}"', shift left=2, shorten <=8pt, shorten >=8pt, Rightarrow, from=0, to=1]
    \end{tikzcd}
    \end{equation}
    whose value on a $\kappa$-family $(A_k)_{k:\kappa}$ is given by
    \[f^{\#}_{(A_k)}(a_k)_{k:\kappa}\coloneqq(a_{f(l)})_{l:\lambda}.\]
    Note that by looking at $f : \mu\to\ptuf$ we recover the diagonal inclusion $(f^{\#})_A : A\hookrightarrow A^{\mu}$.
    Moreover, these $f^{\#}$ are strictly functorial in $f$ \ie 
    \begin{equation}\label{eq:reindexing-fctorial}
    \id_{\kappa}^{\#} = \id_{\int_{\kappa}} \hspace{10pt} \text{ and } \hspace{10pt} (f\circ g)^{\#} = g^{\#}\circ f^{\#}
    \end{equation}
    and the associators of \eqref{eq:ass-unit} are stable by the reindexing action \ie
    \begin{equation}\label{eq:reindexing-ass}
    \begin{tikzcd}
    	{\int_{(t,k):\sum_{k:\kappa}\nu_k}A_{t,k}} & {\int_{k:\kappa}\int_{t:\nu_k}A_{t,k}} \\
    	{\int_{(l,t):\sum_{l:\lambda}\nu_{f(l)}}A_{t,f(l)}} & {\int_{l:\lambda}\int_{t:\nu_{f(l)}}A_{t,f(l)}}
    	\arrow["a", from=1-1, to=1-2]
    	\arrow["{f^{\#}}"', from=1-1, to=2-1]
    	\arrow["{(f,\id)^{\#}}", from=1-2, to=2-2]
    	\arrow["a"', from=2-1, to=2-2]
    \end{tikzcd}
    \end{equation}
    commutes; this can also be written as the commutation as the following diagram.
\[\begin{tikzcd}
	{\Set^{\sum_{k:\kappa}\nu_{k}}} & {} && {} & {\Set^{\sum_{s:\mu}\nu_{f(s)}}} \\
	&& \Set \\
	{\Set^{\kappa}} & {} && {} & {\Set^{\mu}}
	\arrow["{(f,\id)^*}", dashed, from=1-1, to=1-5]
	\arrow[""{name=0, anchor=center, inner sep=0}, "\int"{description}, from=1-1, to=2-3]
	\arrow["{(\int_{\nu_{k}})}"', from=1-1, to=3-1]
	\arrow["a"', shift right=7, shorten <=19pt, shorten >=23pt, Rightarrow, from=1-2, to=3-2]
	\arrow["a", shift left=7, shorten <=19pt, shorten >=23pt, Rightarrow, from=1-4, to=3-4]
	\arrow[""{name=1, anchor=center, inner sep=0}, "\int"{description}, from=1-5, to=2-3]
	\arrow["{(\int_{\nu_{f(s)}})}", from=1-5, to=3-5]
	\arrow[""{name=2, anchor=center, inner sep=0}, "{\int_{\kappa}}"{description}, from=3-1, to=2-3]
	\arrow["{f^*}"', dashed, from=3-1, to=3-5]
	\arrow[""{name=3, anchor=center, inner sep=0}, "{\int_{\mu}}"{description}, from=3-5, to=2-3]
	\arrow["{(f,\id)^{\#}}"{description}, shift left, shorten <=14pt, shorten >=14pt, Rightarrow, dashed, from=0, to=1]
	\arrow["{f^{\#}}"{description}, shift right=2, shorten <=14pt, shorten >=14pt, Rightarrow, dashed, from=2, to=3]
\end{tikzcd}\]
\end{cons}

The notion of ultracategory abstracts the above properties of the functors $\int_{\mu} : \Set^{\mu}\to\Set$.
\begin{defn}
    An \emph{ultracategory} is given by a category $\M$ together with
    \begin{enumerate}
        \item functors $\int_{\mu} : \M^{\mu}\to\M$ for each ultrafilter $\mu$; 
        \item associators and an unitor, given by natural isomorphisms as in \eqref{eq:ass-unit}, satisfying the same three coherences \eqref{eq:ass-coh}, \eqref{eq:unit-coh};
        \item and reindexing transformations $f^{\#} : \int_{\kappa}\Rightarrow\int_{\lambda}\circ f^*$, as in \eqref{eq:reindexing}, for any map of ultrafilters $f\in\UF(\lambda,\kappa)$;
        \item that are strictly functorial in $f$ and respects the associators \ie the analogue of \eqref{eq:reindexing-fctorial} holds and the analogue of \eqref{eq:reindexing-ass} commutes.
    \end{enumerate}
\end{defn}

\begin{rmk}
    The strict functoriality of the reindexing transformations shows that the functors $\int_{\mu} : \M^{\mu}\to\M$ and $\int_{\nu} : \M^{\nu}\to\M$ are equivalent for isomorphic ultrafilters $\mu$ and $\nu$. 
\end{rmk}

\begin{defn}
    A \emph{left-ultrafunctor} from an ultracategory $\M$ to another $\N$ is given by a functor of the underlying categories $F : \M\to\N$, together with natural transformations
\[\begin{tikzcd}
	{\M^{\mu}} & {\N^{\mu}} \\
	\M & \N
	\arrow["{F^{\mu}}", from=1-1, to=1-2]
	\arrow[""{name=0, anchor=center, inner sep=0}, "{\int_{\mu}}"', from=1-1, to=2-1]
	\arrow[""{name=1, anchor=center, inner sep=0}, "{\int_{\mu}}", from=1-2, to=2-2]
	\arrow["F"', from=2-1, to=2-2]
	\arrow["\sigma_{\mu}"', shorten <=15pt, shorten >=15pt, Rightarrow, from=0, to=1]
\end{tikzcd}\]
    for all ultrafilters $\mu$, that commute with the associators, the unitors, and the reindexings.
    Left-ultrafunctors from $\M$ to $\N$ form a category, where maps are natural transformations between the underlying functors of categories that commutes with the $(\sigma_{\mu})$. Ultracategories with left-ultrafunctors form a strict 2-category that we denote by $\UltL$. An \emph{ultrafunctor} is a left-ultrafunctor with all the $\sigma_{\mu}$ being isomorphisms, ultracategories with ultrafunctors from a sub-2-category of $\UltL$ that we denote by $\Ult$.
\end{defn}

\begin{rmk}\label{rmk:strong-weak}
    We expect this definition to be equivalent to Rosolini's \cite{Pino} (\ie to be exactly the $\bb$-pseudoalgebras); Lurie's definition is slightly weaker as he does not ask for the associator to be an isomorphism. We suggest calling Lurie's notion \emph{weak ultracategoriy} and the one above \emph{ultracategory}, or \emph{strong ultracategory} if we want to emphasize that the associator is indeed an isomorphism. This is closer to Makkai's original motivation to have ultraproduct operations that behave as much as possible like the ultraproducts of sets. The main example of weak ultracategories that are not ultracategories is what Lurie calls the ``categorical ultrastructure'' \cite[Example 1.3.8]{Lurie} on a category with product and filtered colimit, and we indeed believe that in general these categorical (weak) ultrastructures should not be thought of as a categorified compact Hausdorff space.
\end{rmk}

\begin{ex}
    The category $\Set$ with the usual ultraproduct of sets has a structure of ultracategory. More generally, the category of models of any small pretopos has a structure of an ultracategory.
\end{ex}

\begin{ex}\label{ex:bb-ultracat}
    For $S$ a (large) set, the category $\bb(S) = \UF^{\op}_S$ has an ultracategory structure: the ultraproduct of a $\mu$-family of objects $((x_{s,t})_{t:\nu_s})_{s:\mu}$ is given by $(x_{s,t})_{(s,t):\sum_{s:\mu}\nu_s}$.
    Note that $\bb(S)$ is isomorphic as a category to the disjoint union of the slice categories $\UF^{\op}/\mu$ with $\mu$ varying over small ultrafilters on $S$, but the ultracategory structure makes the different connected components interact.
\end{ex}

The main result of Lurie's paper \cite[Theorem 2.2.2]{Lurie} can be stated as follows.
    
\begin{Thm}[\cite{Lurie}]\label{Thm:Lurie}
    The points of a coherent topos $\E$ have an ultracategory structure $\vpt(\E)$, and the evaluation functor $\E\to\Cat(\pt(\E),\Set)$ can be enhanced into an equivalence
    \[\E\stackrel{\sim}\longrightarrow\UltL(\vpt(\E),\Set).\]
\end{Thm}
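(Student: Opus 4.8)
Write $\E=\Sh(\bbT)$ for a small pretopos $\bbT$, so that $\pt(\E)\simeq\Mod(\bbT)=\Pretop(\bbT,\Set)$: a point of $\E$ is an inverse-image functor $\E\to\Set$, determined by its restriction along $\bbT\hookrightarrow\E$. For a $\mu$-family of points $(M_s)_{s:\mu}$ set $\bigl(\int_\mu M_s\bigr)(A)\coloneqq\int_{s:\mu}M_s(A)$ on objects $A$ of $\bbT$. That this is again a pretopos functor is \L os's theorem, which here is precisely the statement that the pretopos operations --- finite limits, disjoint coproducts, quotients by equivalence relations --- commute with ultraproducts of sets, i.e.\ \Cref{lem:up-prod} together with \Cref{prop:ultraquantification}. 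The associator and unitor come from the curryfication isomorphism of \Cref{cons:curry} and the neutrality isomorphisms of \Cref{cons:ass-unit-sum}, the reindexing transformations $f^{\#}$ are defined objectwise by the formula in the ultracategory axioms, and all coherence diagrams are inherited from the corresponding identities in $\Set$. This produces the ultracategory $\vpt(\E)$. For $X\in\E$ the functor $\ev(X):M\mapsto M^*(X)$ becomes a \emph{left}-ultrafunctor via the canonical comparison $\sigma_\mu:(\int_\mu M_s)^*(X)\to\int_\mu M_s^*(X)$; for $X$ coherent (i.e.\ in $\bbT$) this comparison is an isomorphism by \L os, so coherent objects are sent to genuine ultrafunctors, while for a general $X=\colim_i X_i$ a filtered colimit of coherent objects it is the ever-present comparison map $\colim_i\int_\mu(-)\to\int_\mu\colim_i(-)$.

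\textbf{Full faithfulness on the coherent core and its extension.} On objects of $\bbT$, full faithfulness of $\ev:\E\to\UltL(\vpt(\E),\Set)$ is Makkai's equivalence $\bbT\xrightarrow{\sim}\Ult(\Mod(\bbT),\Set)$ quoted above, once one notes that a left-ultra natural transformation between two \emph{ultra}functors is automatically ultra (commuting with the structure maps, which are now isomorphisms, is exactly the ultra-naturality square), so $\Hom_{\UltL}(\ev X,\ev Y)=\Hom_{\Ult}(\ev X,\ev Y)=\Hom_\bbT(X,Y)$ for coherent $X,Y$. To reach arbitrary objects one uses that $\ev$ preserves filtered colimits (computed pointwise in $\UltL(\vpt(\E),\Set)$, and $M^*$ preserves them) and that the image of a coherent object is compact in $\UltL(\vpt(\E),\Set)$: writing $Y=\colim_j Y_j$ with $Y_j$ coherent gives, for $X$ coherent, $\Hom_\E(X,Y)=\colim_j\Hom_\E(X,Y_j)=\colim_j\Hom_{\UltL}(\ev X,\ev Y_j)=\Hom_{\UltL}(\ev X,\ev Y)$, and the case of general $X=\colim_i X_i$ follows since both $\Hom$-bifunctors send a filtered colimit in the first variable to a cofiltered limit.

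\textbf{Essential surjectivity.} It remains to show every left-ultrafunctor $F:\vpt(\E)\to\Set$ is of the form $\ev(X)$. The cleanest route is a topos-recognition argument. First show $\E'\coloneqq\UltL(\vpt(\E),\Set)$ is a Grothendieck topos: it is complete and cocomplete with colimits computed pointwise, has a small generating family, and satisfies Giraud's exactness axioms because $\Set$ does and the ultrastructure operations enter in a lex, cocontinuous way. Next identify $\pt(\E')$: evaluation at a point $M$ of $\E$, namely $F\mapsto F(M)$, is lex, cocontinuous and preserves the left-ultrastructure, hence a point of $\E'$; a double-dualization argument shows these are all of them, so $\pt(\E')\simeq\vpt(\E)$ as ultracategories. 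Then $\E'$ has enough points, and its coherent objects are exactly $\Ult(\vpt(\E),\Set)\simeq\bbT$ by Makkai; a topos with enough points whose coherent objects form $\bbT$ and which is generated under colimits by them is canonically $\Sh(\bbT)=\E$, and under this identification $\ev$ is the evaluation functor. Restricting to compact objects recovers Makkai's original equivalence, and specializing to coherent $\E$ gives the statement.

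\textbf{Main obstacle.} The principal difficulty is essential surjectivity --- specifically, recognizing $\UltL(\vpt(\E),\Set)$ as a topos and computing its points and coherent objects. Running a close second is the boundedness bookkeeping: $\vpt(\E)$ is a \emph{large} ultracategory, so one must restrict to \emph{bounded} left-ultrafunctors (or prove the relevant ones are automatically bounded) before the pointwise computation of filtered colimits, the compactness of $\ev(X)$ for coherent $X$, and the generating-set claim above are legitimate.
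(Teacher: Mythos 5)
First, a point of orientation: the paper does not prove this statement where it appears. It is quoted as Lurie's theorem and only recovered at the very end (\Cref{rmk:get-Lurie-back}) as a specialization of the general reconstruction theorem: \Cref{Thm:thetrueone} makes $\ev:\E\to\vSh(\vpt(\E))$ an equivalence for any topos with enough points, Deligne's completeness theorem supplies enough points when $\E$ is coherent, and \Cref{ex:ultrasheaf}.\ref{ex:ultrasheaf-ultracat} identifies $\vSh(\underline{\M})$ with $\UltL(\M,\Set)$ for an ultracategory $\M$. Your proposal is therefore a genuinely different route --- a direct attack in the Makkai--Lurie style rather than a specialization of the groupoid-descent machinery. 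The first two parts are reasonable in outline: the ultracategory structure on $\pt(\E)$ via the \L o\'s ultraproduct theorem is standard, and the reduction of full faithfulness to Makkai's equivalence on the coherent core, extended along filtered colimits, is plausible --- except that the compactness of $\ev(X)$ in $\UltL(\vpt(\E),\Set)$ for $X$ coherent is asserted rather than proved, and it is not a formal consequence of compactness of $X$ in $\E$ (it is closely tied to \cite[Proposition 5.3.4]{Lurie}, which the paper's own remarks single out as nontrivial).

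The genuine gap is essential surjectivity. Your ``topos-recognition argument'' asserts precisely the hard content of the theorem without supplying it. (i) The claim that $\UltL(\vpt(\E),\Set)$ has a \emph{small generating family}: the only candidate family is the image of $\bbT$ under $\ev$, and showing it generates is (given exactness and full faithfulness) equivalent to showing every left-ultrafunctor is a colimit of definable ones, i.e.\ to essential surjectivity itself --- the argument is circular at this point. (ii) ``A double-dualization argument shows these are all of them'' for the points of $\E'$ is exactly the kind of statement that requires the full strength of conceptual completeness; nothing in your setup excludes exotic lex cocontinuous functionals on $\E'$ not of the form $F\mapsto F(M)$. (iii) Even granting (i) and (ii), one must identify the topology that $\bbT$ induces on $\E'$ with the coherent topology. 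A completed proof along your lines would have to reproduce the substance of Lurie's Sections 5--7; the paper instead routes around all of this by proving a descent criterion for virtual ultracategories (\Cref{prop:desc-vUlt}) and constructing the groupoid of amply indexed models (\Cref{lem:ample}, \Cref{cor:T0ample-desc}), which is where the work your ``recognition'' step silently assumes actually gets done.
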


As explained in \cite[Section 2.3]{Lurie}, Lurie's theorem lets us recover Makkai's original results. We will, in turn, recover Lurie's theorem as a subproduct of our final theorem (\Cref{rmk:get-Lurie-back}).

\section{Virtual ultracategories}\label{sec:vultracat}

The definition of virtual ultracategory is a straightforward categorification of the definition of relational $\beta$-module (\Cref{def:rel-beta-mod}).

\begin{defn}
    A \emph{virtual ultracategory} (abbreviated v-ultracategory) $X$ is given by:
    \begin{enumerate}
        \item a class of objects $X_0$;
        \item a homset functor \[\Hom_X : X_0\times\bb(X_0)\longrightarrow \Set.\]
        We will write $X(a,(b_s)_{s:\mu})$ for the set $\Hom_X(a,(b_s)_{s:\mu})$
        and $f:a \ultrato (b_s)_{s:\mu}$ for an element $f\in X(a,(b_s)_{s:\mu})$, such a $f$ will be called an \emph{ultraarrow of type $\mu$} (or a \emph{$\mu$-arrow}) with domain $a$ and codomain $(b_s)_{s:\mu}$;
        \item an identity $\ptuf$-arrow \[\id_a : a\ultrato (a)_{\ptuf}\] for any object $a$ in $X_0$;
        \item a composition operation
        \[\int_{s:\mu}\Hom(b_s,(c_{s,t})_{t:\nu_s})\times \Hom(a,(b_s)_{s:\mu}) \longrightarrow \Hom(a,(c_{s,t})_{(s,t):\sum_{s:\mu}\nu_s})\] natural in $(b_s)_{s:\mu}\in\bb(X)$ and $((c_{s,t})_{t:\nu_s})_{s:\mu}\in\bb(X)^{\mu}$.
        We will denote by
        \[(g_s)_{s:\mu}\circ f : a \ultrato (c_{s,t})_{(s,t):\sum_{s:\mu}\nu_s}\]
        the composite of $f : a \ultrato (b_s)_{s:\mu}$ and $(g_s)_{s:\mu} \in \int_{s:\mu} X(b_s,(c_{s,t})_{t:{\nu_s}})$;
        \item satisfying the usual identity and associativity laws
        \[(g)_{s:\ptuf}\circ \id_a = g \text{ , } (\id_s)_{s:\mu}\circ f = f,\]
        \[ (h_{s,t})_{(s,t):\sum_{s:\mu}\nu_s} \circ ((g_s)_{s:\mu}\circ f) = ((h_{s,t})_{t:\nu_s} \circ (g_s))_{s:\mu}\circ f,\]
        where we implicitly use \Cref{cons:ass-unit-sum}.
    \end{enumerate}
\end{defn}

\begin{ex}\label{ex:vUlt} \leavevmode
    \begin{enumerate}[label=(\roman*)]
        \item\label{ex:vUlt-pt} There is a trivial v-ultracategory with only one object and whose all homsets are singleton, we will denote it $\mathbb{1}$ and call it \emph{the point v-ultracategory}.
        
        \item The v-ultracategory of small sets is a particularly important one, we denote it again $\Set$. Its objects are small sets and its homsets are given by
        \[\Set(A,(B_s)_{s:\mu}) \coloneq \Set\left(A,\int_{s:\mu}B_s\right),\]
        the unit and the composition are defined using the isomorphisms of \eqref{eq:ass-unit}.
        
        \item\label{ex:vUlt-topoi} In the same way that the points of a topological space form a relational $\beta$-module, there is a v-ultrastructure on the points of any topos. Recall that in a topological space $a\cleq\mu$ means that any open containing $a$ contains also $\mu$, formally written as:
        \[\forall\text{ $U$ open of $T$ }  (a\in U \implies \forall x:\mu, x\in U).\]
        For a topos $\E$, we construct a v-ultracategory $\vpt(\E)$ on the points of $\E$ by taking a proof relevant version of the above formula
        \[\vpt(\E)(a,(b_s)_{s:\mu}) \coloneq \Nat_{E:\E}\left(E_a,\int_{s:\mu}E_{b_s}\right),\]
        where $U$ open of $T$ is replaced by $E$ sheaf of $\E$, the proposition $(a\in U)$ is replaced by the fiber $E_a$, and the ultraquantification is replaced by an ultraproduct of sets.
        
        Concretely, ultraarrows from $a$ to $(b_s)_{s:\mu}$ are given by natural transformations as below.
        \[\begin{tikzcd}[sep=small]
        	& {} & {\Set^{S}} \\
        	\E \\
        	& {} & \Set
        	\arrow["{\int_{\mu}}", from=1-3, to=3-3]
        	\arrow["{({b_s}^*)_{s:S}}"{pos=0.7}, from=2-1, to=1-3]
        	\arrow["{a^*}"'{pos=0.6}, from=2-1, to=3-3]
        	\arrow[shift right=4, shorten <=17pt, shorten >=12pt, Rightarrow, from=3-2, to=1-2]
        \end{tikzcd}\]
        In general $\int_{s:\mu}b_s^*$ is not cocontinuous, so it doesn't induce a point of $\E$, however $a^*$ is cocontinuous, so such a natural transformation is determined by its value on some small generating set of the topos $\E$, and so $\Nat_{E:\E}(E_a,\int_{s:\mu}E_{b_s})$ is small.
        
        Note that we recover $\mathbb{1}$ (resp. $\Set$) from above as the v-ultracategory of points of the terminal topos (resp. the classifying topos of the theory of objects).
        
        \item Generalizing the above example, for $\iota : X\to \pt(\E)$ a (large) family of points of a topos $\E$, we define a v-ultracategory $\vpt(\E;X)$ with class of objects $X$ by pulling back $\vpt(\E)$ along $\iota$. The homsets are given by the formula
        \[\vpt(\E;X)(a,(b_s)_{s:\mu}) \coloneq \Nat_{E:\E}\left(E_{\iota(a)},\int_{s:\mu}E_{\iota(b_s)}\right).\]
        
        \item\label{ex:vUlt-topsp} One can associate to a topological space $T$ a v-ultracategory $\vpt(T)$ on the set of points of $T$. The homsets are given by
        \[\vpt(T)(a,(b_s)_{s:\mu}) \coloneq \llbracket a\cleq (b_s)_{s:\mu}  \rrbracket.\]
        This construction corresponds to $\vpt(\Sh(T),T)$.
        
        \item\label{ex:vUlt-cat} From a locally small category $\C$, we can define a v-ultracategory $\Alex(\C)$ on the objects of $\C$, where the homsets are defined by
        \[\Alex(\C)(a,(b_s)_{s:\mu}) \coloneq \int_{s:\mu}\C(a,b_s),\]
        if $\C$ is small this construction coincide with $\vpt(\Set^{\C};\C_0)$ by Yoneda.
        Note that this construction generalizes the Alexandroff topology: if the category is posetal the v-ultracategory obtained is the same than the one obtained by applying the previous construction to its Alexandroff topology.
    
        \item\label{ex:vUlt-ultracat} From any ultracategory $\M$, we can define a v-ultracategory $\underline{\M}$ having the same objects than $\M$, and
        \[\underline{\M}(a,(b_s)_{s:\mu}) \coloneq \M(a,\int_{s:\mu}b_s),\]
        if $\M$ is the ultracategory of models of a coherent topos $\E$, then this v-ultracategory coincide with $\vpt(\E)$. 
        Note that we need $\M$ to be a strong ultracategory (\Cref{rmk:strong-weak}) to be able to define the composition in $\underline{\M}$.
    \end{enumerate}
\end{ex}

Virtual ultracategories are generalized multicategorical structures and, as one can expect, they form a strict 2-category.
\begin{defn}\leavevmode
    \begin{enumerate}
        \item A \emph{functor} between two v-ultracategories $F:X\to Y$ is given by a map at the level of objects $F_0 : X_0 \to Y_0$ and maps on the homsets $F_{a,(b_s)_{s:\mu}} : X(a,(b_s)_{s:\mu}) \to Y(F_0(a),(F_0(b_s))_{s:\mu})$ natural in $(b_s)\in\bb(X_0)$, such that
        \[F(\id_a) = \id_{F(a)} \text{ , and }\]
        \[F((g_s)_{s:\mu}\circ f) = (F(g_s))_{s:\mu} \circ F(f).\]
        \item A \emph{natural transformation} $\alpha : F\Rightarrow G$ between two such functors consists of $\ptuf$-arrows $\alpha_a : F(a)\ultrato (G(a))_{\ptuf}$ for each object $a\in X_0$, such that the following naturality condition is satisfied
        \[G(f)\circ\alpha_a = (\alpha_{b_s})_{s:\mu}\circ F(f) \text{ , for any }f\in X(a,(b_s)_{s:\mu}).\]
    \end{enumerate} 
    This endows the collection of virtual ultracategories with a strict 2-categorical structure that we denote by $\vUlt$. 
    Note that in general $\vUlt(X,Y)$ is neither small nor locally small. 
\end{defn}

\begin{prop}\label{prop:vUlt-subcat}
    The $\vpt$ construction from \Cref{ex:vUlt}.\ref{ex:vUlt-topoi} induces a strict 2-functor \[\GTop \to \vUlt.\]
    The constructions of Examples \ref{ex:vUlt}.\ref{ex:vUlt-topsp},\ref{ex:vUlt-cat},\ref{ex:vUlt-ultracat} induce three 2-fully faithful strict 2-functors
        \[\TopSp\hookrightarrow\vUlt , \CAT\hookrightarrow\vUlt \text{, and }\UltL\hookrightarrow\vUlt.\]
\end{prop}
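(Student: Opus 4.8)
The plan is to verify each of the four claimed 2-functors separately, treating $\vpt : \GTop \to \vUlt$ first and then deducing (or independently establishing) the three embeddings via the identifications recorded in Example~\ref{ex:vUlt}. For the $\vpt$ construction, I would first pin down the action on 1-cells and 2-cells: given a geometric morphism $f : \E \to \F$ with inverse image $f^* : \F \to \E$, a point $a : \Set \to \E$ gives a point $f \circ a$ of... wait, points go the other way; a point $p$ of $\E$ is a geometric morphism $\Set \to \E$, so $f \circ p$ is a point of $\F$, giving $\vpt(f)_0 : \vpt(\E)_0 \to \vpt(\F)_0$ on objects. On homsets, an ultraarrow $a \ultrato (b_s)_{s:\mu}$ in $\vpt(\E)$ is a natural transformation $E_a \Rightarrow \int_{s:\mu} E_{b_s}$ indexed by $E \in \E$; precomposing the indexing with $f^* : \F \to \E$ and using $(f^*F)_p = F_{f\circ p}$ for a point $p$ of $\E$ produces a natural transformation indexed by $F \in \F$, i.e.\ an ultraarrow $(f\circ a) \ultrato (f\circ b_s)_{s:\mu}$ in $\vpt(\F)$. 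I would then check that this assignment is natural in the $\bb$-variable, sends identities to identities and respects composition (both reduce to the fact that restriction along $f^*$ is a functor commuting with the relevant limits/ultraproducts, which are computed pointwise in $\Set$), and finally that a 2-cell $f^* \Rightarrow g^*$ induces a natural transformation of v-ultracategory functors via whiskering — the naturality square for that is exactly the naturality of the original 2-cell evaluated at points. Strictness of the 2-functoriality is immediate because everything is defined by honest precomposition, with no coherence isomorphisms introduced.

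For the three embeddings, the strategy is to reduce each to a statement about the $\vpt$ functor composed with a known embedding into $\GTop$ or $\CAT$, using the identifications already stated: $\vpt(T) = \vpt(\Sh(T), T)$ for spaces (Example~\ref{ex:vUlt}.\ref{ex:vUlt-topsp}), $\Alex(\C) = \vpt(\Set^{\C}; \C_0)$ for small categories (Example~\ref{ex:vUlt}.\ref{ex:vUlt-cat}), and $\underline{\M} = \vpt(\E)$ when $\M = \pt(\E)$ for a coherent topos $\E$ (Example~\ref{ex:vUlt}.\ref{ex:vUlt-ultracat}). For $\TopSp \hookrightarrow \vUlt$: since the hom-objects of $\vpt(T)$ are truth values $\llbracket a \cleq (b_s)_{s:\mu} \rrbracket$, a v-ultrafunctor $\vpt(S) \to \vpt(T)$ is exactly a function on points sending the convergence relation into the convergence relation, which by Proposition~\ref{charact:open} is exactly a continuous map; and a 2-cell (a natural transformation) between two such is exactly a $\ptuf$-arrow $F(a) \ultrato (G(a))_{\ptuf}$ for each $a$, i.e.\ $F(a) \cleq \delta_{G(a)}$, which is precisely $F \cleq G$ in the specialization order — so the functor is fully faithful on 1-cells and 2-cells, i.e.\ 2-fully faithful. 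For $\CAT \hookrightarrow \vUlt$: a v-ultrafunctor $\Alex(\C) \to \Alex(\D)$ restricts, by looking only at $\ptuf$-arrows, to an ordinary functor $\C \to \D$ (since $\Alex(\C)(a,(b)_{\ptuf}) = \C(a,b)$), and conversely every ordinary functor extends uniquely because an ultraarrow of $\Alex(\C)$ is an element of $\int_{s:\mu}\C(a,b_s)$, on which the functor acts componentwise; one checks this extension respects composition using that ultraproducts of sets are computed by the same colimit formula on both sides, and that the extension is forced. The 2-cell correspondence is immediate. For $\UltL \hookrightarrow \vUlt$: a v-ultrafunctor $\underline{\M} \to \underline{\N}$ amounts to a functor of underlying categories together with, for each $\mu$, a map $\M(a, \int_\mu b_s) \to \N(Fa, \int_\mu Fb_s)$ natural and compatible with composition; unwinding, the $\mu$-component assembles (via Yoneda in the codomain variable, using that $\int_\mu$ is a functor) into precisely a natural transformation $\sigma_\mu : F \circ \int_\mu \Rightarrow \int_\mu \circ F^\mu$, and the associativity/identity axioms of the v-ultracategory functor translate term-by-term into the axioms saying $\sigma$ is compatible with associators, unitors and reindexings — i.e.\ exactly a left-ultrafunctor; conversely a left-ultrafunctor gives such data. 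Again 2-cells match on the nose: a natural transformation of left-ultrafunctors is a natural transformation of the underlying functors compatible with the $\sigma_\mu$, and this compatibility is exactly the v-ultracategory naturality condition $G(f) \circ \alpha_a = (\alpha_{b_s})_{s:\mu} \circ F(f)$.

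**Main obstacle.** The genuinely delicate point is the $\UltL$ embedding: unwinding a v-ultrafunctor $\underline{\M}\to\underline{\N}$ into the full structure of a left-ultrafunctor $(F,(\sigma_\mu))$ and checking that the v-ultracategory functor axioms are \emph{equivalent} to the (numerous) coherence axioms for left-ultrafunctors — compatibility with associators, unitor, and all reindexings $f^\#$ — requires carefully matching the composition law of $\underline{\M}$, which is defined \emph{via} the associators and unitor of $\Cref{cons:ass-unit-sum}$ and the ultracategory associators of \eqref{eq:ass-unit}, against the left-ultrafunctor axioms. In particular one must confirm that naturality of the v-hom-map in the $\bb(X)^\mu$-variable captures exactly the reindexing compatibility \eqref{eq:reindexing-ass}, which is the least transparent of the correspondences. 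I expect the space and category embeddings to be routine once the conventions are fixed; the ultracategory one is where the bookkeeping bites, and it is also the place where the hypothesis that $\M$ be a \emph{strong} (not merely weak) ultracategory — so that $\underline{\M}$ even has a well-defined composition, cf.\ \Cref{ex:vUlt}.\ref{ex:vUlt-ultracat} — is essential and must be invoked.
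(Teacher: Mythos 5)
Your proposal is correct and follows essentially the same route as the paper, which itself only details the $\UltL$ case: there too the left-ultrafunctor structure $\sigma_\mu$ is extracted by evaluating the v-ultrafunctor on the ultraarrow $\int_{s:\mu}a_s\ultrato(a_s)_{s:\mu}$ corresponding to the identity, and the converse composes $F(f)$ with the comparison map. (Only a cosmetic point: for the continuity of the underlying map in the $\TopSp$ case the relevant reference is \Cref{charact:map} rather than \Cref{charact:open}.)
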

\begin{proof}
    We give some details for the proof that $\UltL\hookrightarrow\vUlt$ is 2-fully faithful. Let $\M$ and $\N$ be two ultracategories and $F : \underline\M \to \underline\N$ a functor between v-ultracategories. The ultraarrow $\int_{s:\mu}a_s\ultrato(a_s)_{s:\mu}$ of $\underline\M$ corresponding to the identity is sent on an ultraarrow $F(\int_{s:\mu}a_s)\ultrato(F(a_s))_{s:\mu}$ of $\underline\N$ that corresponds to some arrow $\sigma : F(\int_{s:\mu}a_s)\to\int_{s:\mu}F(a_s)$ in $\N$. One can check that these $(\sigma)$ induce a left-ultrafunctor structure $F : \M \to \N$.
    
    Conversely, from a left-ultrafunctor $F : \M\to\N$ we can define a functor of v-ultracategories as follow: an ultraarrow $a \ultrato (b_s)_{s:\mu}$ corresponding to some $f: a \to \int_{s:\mu}b_s$ is sent on the ultraarrow $F(a) \ultrato (F(b_s))_{s:\mu}$ corresponding to the composite of $F(f):F(a)\to F(\int_{s:\mu}b_s)$ with the comparison map $F(\int_{s:\mu}b_s)\to\int_{s:\mu}F(b_s)$.
    
    One can then check that these two constructions establish a bijection between $\UltL(\M,\N)$ and $\vUlt(\underline{\M},\underline{\N})$.
\end{proof}

\begin{rmk}
    The final result (\Cref{cor:final-adj}) will ensure that $\GTop \to \vUlt$ is actually 2-fully faithful when restricted to topoi with enough points.
\end{rmk}

\begin{defn}
    A \emph{point}  $x$ of a v-ultracategory $X$ is a functor $x : \mathbb{1} \to X$, where $\mathbb{1}$ is the point v-ultracategory from \Cref{ex:vUlt}.\ref{ex:vUlt-pt}. The category $\vUlt(\mathbb{1},X)$ is denoted $\pt(X)$ and called \emph{the category of points} of $X$.
    Explicitly, the category of points $\pt(X)$ has the same objects as $X$ and $\pt(X)(a,b)$ is given by $X(a,(b)_{\ptuf})$.
\end{defn}

\begin{ex}
    Following the Examples \ref{ex:vUlt}.\ref{ex:vUlt-topoi},\ref{ex:vUlt-topsp},\ref{ex:vUlt-cat} and \ref{ex:vUlt-ultracat}
    \begin{enumerate}
        \item for $\E$ a topoi, $\pt(\vpt(\E))$ is the usual category of points of $\E$;
        \item for $T$ a topological space, $\pt(\vpt(T))$ is the specialization order on $T$;
        \item for $\C$ a locally small category, $\pt(\Alex(\C))$ recovers the category $\C$;
        \item and for $\M$ an ultracategory, $\pt(\underline{\M})$ is the underlying category of $\M$.
    \end{enumerate}
\end{ex}

\begin{rmk}
    As explained in \Cref{rmk:largebb}, the functor $\bb$ can be extended to $\CAT$; the homset functor $\Hom_X : X\times\bb(X) \to \Set$ can thus be extended to a functor $\pt(X)^{\op}\times \bb(\pt(X)) \to \Set$, \ie a distributor between $\pt(X)$ and $\bb(\pt(X))$, categorifying the ultraconvergence relation between points and ultrafilters in a topological space.
\end{rmk}

\begin{rmk}
    If $\M$ is an ultracategory, the functors $\vpt(\M)(-,(b_s)_{s:\mu}) : \pt(X)^{op} \to \Set$ are represented by $\int_{s:\mu}b_s$. We actually expect the notion of virtual ultracategory to fit into the framework of generalized multicategories from \cite{GenMultiCat} and that the essential image of $\UltL\hookrightarrow\vUlt$ is exactly the representable virtual ultracategories in the sense of \cite[Section 9]{GenMultiCat}.
\end{rmk}

\begin{rmk}
    The choice of generalizing the codomain and not the domain can be surprising at first, let us temporarily call ‘‘right-virtual ultracategory’’ the notion of v-ultracategory with ultraarrows having a generalized domain instead of a generalized codomain \ie with a hom-functor $\bb(X)^{\op}\times X\to\Set$. 
    Note that the argument for $\vpt(\E)$ of \Cref{ex:vUlt}.\ref{ex:vUlt-topoi} to have small homsets does not work for right-v-ultracategories.  
    
    Also, whereas our notion of v-ultracategories extends ultracategories with left-ultrafunctors, right-v-ultracategories extend ultracategories with right-ultrafunctors; moreover, we would then be able to embed all Lurie's ultracategories (see \Cref{rmk:strong-weak}) into right-v-ultracategories, and in particular all the categorical ultracategories \cite[Example 1.3.8]{Lurie}.
    The notion of right-ultrafunctors seems to have some importance in regard to Lurie's \emph{ultracategory envelopes} \cite[Section 8]{Lurie}, and we believe that the notion of right-ultrafunctors (and of right-v-ultracategories) makes more sense if one studies the algebraic aspect of the ultraproduct and sees the ultraproduct as an ‘‘ultratensor operation’’ rather than a ‘‘categorified ultraconvergence’’.
\end{rmk}

\section{Back and forth with topoi}\label{sec:back-forth}

We have seen in \Cref{ex:vUlt}.\ref{ex:vUlt-topoi} that the points of a topos form naturally a v-ultracategory. As explained in the introduction, our goal is to reconstruct the topos from its v-ultracategory of points. We see now that any object of a topos can be seen as an \emph{sheaf} over the v-ultracategory of points of the topos.

\begin{defn}
    An \emph{ultrasheaf} (or simply a \emph{sheaf}) over a v-ultracategory $X$ is a functor of v-ultracategories $A : X \to \Set$. The category $\vUlt(X,\Set)$ is \emph{the category of (ultra)sheaves over $X$} and is denoted by $\vSh(X)$. 
\end{defn}

Concretely, a sheaf $A$ over $X$ is given by a family of sets $(A_x)_{x:X_0}$ indexed by the objects of $X$, together with maps $A(f) : A_a\to\int_{s:\mu} A_{b_s}$ for each $f\in X(a,(b_s)_{s:\mu})$ satisfying the usual functoriality axioms.

\begin{defn}\label{def:ev-functor}
    Let $\E$ be a topos, we define the \emph{evaluation functor} $\ev : \E \to \vSh(\vpt(\E;X))$ by
    \[\ev : \E\simeq\GTop(\E,\SetO) \stackrel{\vpt}\longrightarrow\vUlt(\vpt(\E),\Set) = \vSh(\vpt(\E)).\]
    More generally, for $\iota : X\to\pt(\E)$ a (large) family of points, there is an evaluation functor $\ev : \E \to \vSh(\vpt(\E;X))$ given by the following composite
    \[\E\simeq\GTop(\E,\SetO) \stackrel{\vpt}\longrightarrow\vUlt(\vpt(\E),\Set) \stackrel{}\longrightarrow\vUlt(\vpt(\E;X),\Set) = \vSh(\vpt(\E;X)).\]
    Concretely, for any $E\in\E$ and $x\in X$ \[\ev(E)(x) = \iota(x)^*(E)\]
    \ie $\ev(E)(x)$ is the stalk of $E$ at the point $\iota(x)$.
\end{defn}

\begin{ex}\label{ex:ultrasheaf} \leavevmode
    \begin{enumerate}[label=(\roman*)]
        \item For $\C$ a locally small category, the category $\vSh(\Alex(\C))$ is equivalent to the functor category $\Set^{\C}$.
        This follows from noticing that any map $(f_s)_{s:\mu}\in\Alex(\C)(a,(b_s)_{s:\mu})=\int_{s:\mu}\C(a,b_s)$ can be factorized as $a\ultrato(a)_{s:\mu}$ followed by $(f_s : a_s\to b_s)_{s:\mu}$.
        In the particular case when the category is a small category, we deduce that for $\E=\Set^{\C}$ a presheaf topos, the functor $\ev : \E \to \vSh(\vpt(\E;\C_0))$ is always an equivalence.
        
        \item\label{ex:ultrasheaf-ultracat} For $\M$ an ultracategory, the category $\vSh(\underline{\M})$ is equivalent to $\UltL(\M,\Set)$ the category of left-ultrafunctors from $\M$ into $\Set$. This follows from \Cref{prop:vUlt-subcat} noticing that $\underline{\Set}$ is the v-ultracategory $\Set$.
        In particular, for $\E$ a coherent topos, the functor $\ev : \E \to \vSh(\vpt(\E))$ is the inverse image of the geometric morphism $\UltL(\pt(E),\Set)\to\E$ from \cite[Construction 2.2.1]{Lurie}, and thus Lurie's \Cref{Thm:Lurie} can be rephrased as $\ev : \E \to \vSh(\vpt(\E))$ is an equivalence.
        \item For $T$ a topological space, the following proposition (\Cref{prop:0dimcase}) shows that $\ev : \Sh(T) \to \vSh(\vpt(T))$ is an equivalence.
        Or equivalently, $\ev : \E \to \vSh(\vpt(\E;T))$ is an equivalence for $\E \coloneqq\Sh(T)$.
    \end{enumerate}
\end{ex}

\begin{prop}\label{prop:0dimcase}
    Let $T$ be a topological space, the functor $\ev : \Sh(T)\to\vSh(\vpt(T))$ is an equivalence.
\end{prop}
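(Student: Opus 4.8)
The plan is to exhibit an explicit quasi-inverse to $\ev$, realising the ``bunch of germs glued by coherent maps'' picture anticipated in the remark after \Cref{Thm:étale-ucvg}. First I would unwind the target: since every hom-set of $\vpt(T)$ is a subsingleton, an ultrasheaf $A\in\vSh(\vpt(T))$ is exactly a family of sets $(A_x)_{x\in T}$ equipped, for each ultraconvergence $a\cleq(b_s)_{s:\mu}$, with a transition map $A_a\to\int_{s:\mu}A_{b_s}$, subject to the unit and composition laws. A general codomain $(b_s)_{s:\mu}\in\bb(T)$ is the image of the tautological family $(x)_{x:b_*(\mu)}$ under the morphism of $\bb(T)$ induced by the ultrafilter map $b\colon\mu\to b_*(\mu)$, so functoriality in $\bb(T)$ shows that $A$ is already determined by the maps $A(a\cleq(x)_{x:\mu})\colon A_a\to\int_{x:\mu}A_x$ for $\mu\cgeq a$, a general transition map being recovered by reindexing along $b$. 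The evaluation functor behaves the same way: $\ev(E)$ has fibres $\ev(E)(x)=E_x$ (the stalks), and its transition map $E_a\to\int_{s:\mu}E_{b_s}$ sends the germ at $a$ of a local section $\sigma$ to $(\sigma(b_s))_{s:\mu}$, well defined because $a\cleq(b_s)_{s:\mu}$ forces the domain of $\sigma$ to be $b_*(\mu)$-large.

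Second I would build the quasi-inverse on objects. Given $A$, set $E\coloneqq\coprod_{x\in T}A_x$ with first projection $p\colon E\to T$, and topologise $E$ by prescribing its ultraconvergence relation (using that relational $\beta$-modules are the same thing as topological spaces, \Cref{def:rel-beta-mod}): for $e\in E$ over $a\coloneqq p(e)$ and $\rho\in\beta(E)$, declare $e\cleq\rho$ iff $\rho$ is principal over $T$ in the sense of \Cref{def:principal-over} --- say $\rho=\sum_{x:\mu}\delta_{e_x}$ with $\mu\coloneqq p_*(\rho)$ and $(e_x)\in\int_{x:\mu}E_x$ --- and moreover $\mu\cgeq a$ and $(e_x)_{x:\mu}=A(a\cleq(x)_{x:\mu})(e)$. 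Then I would check that this is a relational $\beta$-module: the reflexivity axiom $e\cleq\delta_e$ is the unit law of $A$, while the cocycle axiom is the composition law of $A$ together with the relational $\beta$-module axiom of $T$ itself (used to see that the relevant sum of $T$-ultrafilters still converges to $a$) and the associativity, curryfication and reindexing coherences for ultraproducts and sum ultrafilters from \Cref{cons:ass-unit-sum}, \Cref{cons:curry} and \eqref{eq:ass-unit}. By construction $p$ is continuous (\Cref{charact:map}), and $p$ is étale by \Cref{Thm:étale-ucvg}: condition \ref{cond2} holds because the declared lift $\sum_{x:\mu}\delta_{e_x}$ with $(e_x)=A(a\cleq(x)_{x:\mu})(e)$ is principal over $T$, and condition \ref{cond1} holds because the definition of $\cleq$ on $E$ pins this lift down uniquely once its pushforward is fixed. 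Finally I would verify the round trips: $\ev$ of this $E$ is $A$ (the fibres of the étale map $p$ are the $A_x$, every point of a fibre lying on a local section, and the transition maps match by the first step), and conversely any étale $q\colon E\to T$ is recovered from $\ev(E)$, since as a set $E=\coprod_x E_x$ and \Cref{Thm:étale-ucvg} (compare \Cref{lem:principal-over}) identifies its topology with precisely the relational $\beta$-module written above, the principal-over-$T$ lifts being read off from the transition maps of $\ev(E)$.

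For full faithfulness I would argue directly. A morphism $\ev(E)\to\ev(E')$ in $\vSh(\vpt(T))$ is a family $(\phi_x\colon E_x\to E'_x)_x$ commuting with all transition maps; it assembles into a map $\Phi\coloneqq\coprod_x\phi_x$ over $T$, which is continuous by \Cref{charact:map}: if $\rho\cgeq e$ in $E$ then $\rho=\sum_{x:\mu}\delta_{e_x}$ with $(e_x)_{x:\mu}$ the transition of the germ $e$, so $\Phi_*(\rho)=\sum_{x:\mu}\delta_{\phi_x(e_x)}$, and compatibility of $(\phi_x)$ with transitions says $(\phi_x(e_x))_{x:\mu}$ is the transition of $\phi_a(e)$, whence $\Phi_*(\rho)\cgeq\phi_a(e)=\Phi(e)$. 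Conversely a morphism of sheaves over $T$ is a map of étale spaces over $T$, which is determined by, and determines, such a compatible family on fibres; hence $\ev$ is a bijection on hom-sets, and combined with essential surjectivity (the object part of the second step) this gives the equivalence.

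I expect the object part to be the main obstacle: verifying in full that the ad hoc convergence relation on $E$ is a relational $\beta$-module --- equivalently, that the composition law of $A$ packages exactly into the cocycle condition of \Cref{def:rel-beta-mod} --- and that the resulting $p$ is étale. This is where one has to carry out carefully the bookkeeping with sum ultrafilters and the curryfication and associator coherences of \Cref{cons:ass-unit-sum}, \Cref{cons:curry} and \eqref{eq:ass-unit}; the remaining steps are essentially formal.
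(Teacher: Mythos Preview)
Your proposal is correct and follows the same strategy as the paper: build the quasi-inverse by setting $E=\coprod_{x}A_x$, endow it with a topology, and invoke \Cref{Thm:étale-ucvg} to see that the projection is étale. The one difference is in how the topology on $E$ is introduced. You prescribe the ultraconvergence relation directly and then verify the relational $\beta$-module axioms; the paper instead declares the open subsets directly --- $B\subseteq E$ is open iff for every $a\cleq\mu$ the transition map $A_a\to\int_{x:\mu}A_x$ restricts to $B_a\to\int_{x:\mu}B_x$ --- which is trivially a topology, and then reads off the convergence relation to feed into \Cref{Thm:étale-ucvg}. The two descriptions give the same space (your convergence relation is exactly the one induced by the paper's opens, via \Cref{charact:open}), so this is a matter of packaging; the paper's route has the minor advantage of sidestepping the $\beta$-module verification you flag as the main obstacle, at the cost of having to recompute the convergence afterwards. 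Your treatment of full faithfulness is more explicit than the paper's, which leaves the pseudoinverse check to the reader.
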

\begin{proof}
    The proof relies on the characterization of étale maps of \Cref{Thm:étale-ucvg}.
    From an ultrasheaf $A\in\vSh(\vpt(T))$ we get a sheaf over the space $T$ by taking $E \coloneq \bigsqcup_{a:T}A_a$ with the topology given by: $B\subseteq E$ is open, if
    \[\text{for all } a\cleq \mu \text{ , } A(\llbracket a\cleq\mu\rrbracket) : A_a \longrightarrow \int_{x:\mu}A_x \text{ restricts to } B_a \longrightarrow \int_{x:\mu}B_x,\]
    where $B_x \coloneqq B\cap A_x$.
    The projection map $E\to T$ satisfies the hypotheses of \Cref{Thm:étale-ucvg} and so is \'etale.   
    
    One can then check that this construction induces a functor $\vSh(\vpt(T))\to \Sh(T)$ and that this functor is a pseudoinverse to $\ev : \Sh(T)\to\vSh(\vpt(T))$.
\end{proof}

These examples are particular instances of our main result: any topos $\E$ with enough points can be recovered as the category of ultrasheaves over the virtual ultracategory of its points. More precisely, we will show that for $X$ a separating class of points of $\E$, the functor $\ev : \E \to \vSh(\vpt(\E;X))$ is an equivalence.  Before going into the proof of this result, we study the exactness properties of the category of sheaves above a v-ultracategory.

\begin{lem}\label{lem:ultrasheaf-exactness}
    The category $\vSh(X)$ admits small colimits and finite limits, and they are computed objectwise.
\end{lem}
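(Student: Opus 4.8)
The plan is to prove the statement by reducing everything to the corresponding facts about $\Set$, exactly as one does for presheaf categories, since $\vSh(X) = \vUlt(X,\Set)$ is by definition a category of structure-preserving functors into $\Set$, and $\Set$ itself has all small limits and colimits. The key point is that a (co)limit of functors of v-ultracategories can be computed pointwise provided the pointwise formula again carries a v-ultrafunctor structure, so the real content is checking that the relevant (co)limit operations in $\Set$ interact well with the v-ultracategory axioms, i.e. with finite ultraproducts of sets.

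First I would set up the objectwise candidate. Given a small diagram $(A_i)_{i \in I}$ in $\vSh(X)$, define $A$ on objects by $A(a) \coloneqq \colim_i A_i(a)$ (resp.\ $\lim_i A_i(a)$ for limits). For an ultraarrow $f : a \ultrato (b_s)_{s:\mu}$, I need to produce $A(f) : A(a) \to \int_{s:\mu} A(b_s)$. For colimits, this uses that the canonical comparison map $\colim_i \int_{s:\mu} A_i(b_s) \to \int_{s:\mu} \colim_i A_i(b_s)$ exists (from the universal property of the colimit applied componentwise); composing $\colim_i A_i(f)$ with this comparison gives $A(f)$. For finite limits the situation is cleaner: \Cref{lem:up-prod} shows ultraproducts of sets commute with finite products, and since ultraproducts preserve monomorphisms and hence equalizers of sets, the functor $\int_{s:\mu}(-)$ on $\Set$ preserves finite limits, so $\int_{s:\mu} \lim_i A_i(b_s) \cong \lim_i \int_{s:\mu} A_i(b_s)$ canonically and $A(f)$ is just $\lim_i A_i(f)$ transported across this iso. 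One then checks the identity and associativity laws for $A$: these follow because the comparison maps (resp.\ the commutation isos) are natural and compatible with the associators and unitors of $\Set$, which in turn rests on the coherences \eqref{eq:ass-coh} and \eqref{eq:unit-coh} together with the currification isomorphism of \Cref{cons:curry}.

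Next I would verify the universal property. Having produced $A \in \vSh(X)$ together with the evident cocone (resp.\ cone) maps $A_i \to A$ (resp.\ $A \to A_i$) — which are natural transformations of v-ultrafunctors precisely because the structure maps $A(f)$ were built from the colimit/limit in $\Set$ — one shows this is universal. A cocone $(A_i \to C)_{i}$ in $\vSh(X)$ induces for each object $a$ a map $\colim_i A_i(a) \to C(a)$; the only thing to check is that the resulting family is a functor of v-ultracategories, i.e.\ commutes with the $A(f)$ and $C(f)$, and this is immediate from the construction of $A(f)$ via the comparison map plus the hypothesis that each $A_i \to C$ commutes with structure maps. The limit case is dual and slightly easier because $\int_{s:\mu}$ preserves finite limits outright.

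The main obstacle is the colimit half: unlike finite limits, filtered or arbitrary small colimits of sets do \emph{not} commute with ultraproducts (as the paper explicitly warns after \Cref{lem:up-prod}, ultraproducts fail to commute even with infinite products, and dually with general colimits), so the comparison map $\colim_i \int_{s:\mu} A_i(b_s) \to \int_{s:\mu} \colim_i A_i(b_s)$ is genuinely non-invertible in general. The subtle point to get right is therefore that this does \emph{not} obstruct the construction: we only ever need the comparison map to \emph{exist} and be natural and coherent, which it always does, because $A(f)$ is defined as a map \emph{into} $\int_{s:\mu} A(b_s)$, not out of it — the direction of the comparison is exactly the one we need. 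So I would be careful to phrase the verification so that no invertibility of colimit-ultraproduct comparison is ever invoked, isolating the use of \Cref{lem:up-prod} strictly to the finite-limit statement, and flagging that this asymmetry is precisely why colimits in $\vSh(X)$ are "easy" while, say, a would-be right adjoint or exponentials would be much more delicate.
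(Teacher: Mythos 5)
Your proposal is correct and follows essentially the same route as the paper: define the (co)limit objectwise, use the canonical comparison map $\colim_i \int_{s:\mu} A_i(b_s) \to \int_{s:\mu}\colim_i A_i(b_s)$ (which points in the needed direction, so no invertibility is required) to equip the colimit with its ultrasheaf structure, and for finite limits use that the analogous comparison, though pointing the wrong way, is a bijection. One minor quibble: ``preserves monomorphisms and hence equalizers'' is not a valid inference in general --- but $\int_{s:\mu}$ does preserve equalizers of sets by a direct check (an ultrafamily equalizes $\int f_s$ and $\int g_s$ iff $\mu$-almost all of its components equalize $f_s$ and $g_s$), so the conclusion stands.
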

\begin{proof}
    Let $\eI$ be a small category and $(A^{i})_{i:\eI}$ an $\eI$-diagram in $\vSh(X)$.
    
    For any $(x_s)_{s:\mu}$ ultrafamily of objects of $X$, there is a comparison map 
    \[\colim_{i:\eI}\left(\int_{s:\mu}(A^{i})_{x_s}\right) \longrightarrow \int_{s:\mu}\colim_{i:\eI}((A^i)_{x_s})\]
    allowing us to define an ultrasheaf structure on $(\colim((A^i)_x))_{x:\Ob(X)}$ that we will denote by $A\in\vSh(X)$. There is a natural cocone of $A$ above the $(A^{i})$ and this cocone is universal as maps of ultrasheaves are given objectwise.

    For limits, the comparison map is in the wrong direction
    \[\lim_{i:\eI}\left(\int_{s:\mu}(A^{i})_{x_s}\right) \longleftarrow \int_{s:\mu}\lim_{i:\eI}((A^i)_{x_s})\]
    but this comparison map is bijective when $\eI$ is finite, allowing us to conclude as above.
\end{proof}

\begin{cor}
    The functor $\ev : \E \to \vSh(\vpt(\E;X))$ is lex cocontinuous.
\end{cor}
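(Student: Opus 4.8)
The plan is to reduce the statement to the defining properties of the points $\iota(x)$, using \Cref{lem:ultrasheaf-exactness} to pass to an objectwise description. By that lemma, finite limits and small colimits in $\vSh(\vpt(\E;X))$ are computed objectwise; equivalently, the family of evaluation functors $(-)_x : \vSh(\vpt(\E;X)) \to \Set$, $A \mapsto A_x$, indexed by $x \in X$, jointly creates (and in particular jointly reflects) finite limits and small colimits. Hence a cocone, or a cone over a finite diagram, in $\vSh(\vpt(\E;X))$ is colimiting, resp.\ limiting, as soon as it becomes so after applying each $(-)_x$.

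Next I would observe that, by \Cref{def:ev-functor}, the composite $\E \xrightarrow{\ev} \vSh(\vpt(\E;X)) \xrightarrow{(-)_x} \Set$ sends $E$ to $\ev(E)_x = \ev(E)(x) = \iota(x)^*(E)$; that is, it is precisely the inverse image functor $\iota(x)^* : \E \to \Set$ of the geometric morphism $\iota(x) : \Set \to \E$ picking out the point $x$. By definition of a geometric morphism, $\iota(x)^*$ is lex and cocontinuous. Therefore, for every $x \in X$, the composite $(-)_x \circ \ev$ preserves finite limits and small colimits.

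Combining the two observations: given a finite diagram in $\E$ with limit cone, its image under $\ev$ becomes a limit cone after applying each $(-)_x$, hence is a limit cone in $\vSh(\vpt(\E;X))$; dually for small colimits. Thus $\ev$ is lex and cocontinuous. There is no real obstacle here — the only point that needs care is checking that the objectwise computation of \Cref{lem:ultrasheaf-exactness} is the same evaluation that appears in the formula $\ev(E)(x) = \iota(x)^*(E)$ of \Cref{def:ev-functor}, which is immediate.
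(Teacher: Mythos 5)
Your proof is correct and is exactly the argument the paper intends (the corollary is stated without proof, as an immediate consequence of \Cref{lem:ultrasheaf-exactness} together with the formula $\ev(E)(x)=\iota(x)^*(E)$ and the fact that inverse images of points are lex cocontinuous). Nothing further is needed.
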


\begin{prop}
    The category $\vSh(X)$ is an infinitary pretopos.
\end{prop}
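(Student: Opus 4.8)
The plan is to verify the axioms of an infinitary pretopos listed in the Conventions, using Lemma~\ref{lem:ultrasheaf-exactness} to reduce everything to objectwise statements in $\Set$ together with a check that the relevant colimit comparison maps for ultraproducts behave well. The key observation throughout is that the ``bad direction'' comparison map $\int_{s:\mu}\colim_i \to \colim_i \int_{s:\mu}$ is in fact an \emph{isomorphism} for the \emph{specific} colimits appearing in the pretopos axioms — small coproducts and quotients of equivalence relations — even though it fails for arbitrary colimits; this is because ultraproducts of sets commute with small coproducts (an ultraproduct $\int_{s:\mu}(\bigsqcup_j X^j_s)$ splits, by $\mu$-ultraquantification over which component one lands in, as $\bigsqcup_{(j_s):\int_\mu J}\int_{s:\mu}X^{j_s}_s$, using $\mu$-largeness to pin down a single component) and with filtered colimits and reflexive coequalizers. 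So the ultrasheaf structure maps transport along these colimits, and the objectwise coproduct/quotient carries a canonical ultrasheaf structure making it the coproduct/quotient in $\vSh(X)$.

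Concretely I would proceed as follows. First, $\vSh(X)$ is lex by Lemma~\ref{lem:ultrasheaf-exactness}. Second, the initial ultrasheaf is the constant family $(\emptyset)_{x:X_0}$ with its unique structure maps (note $\int_{s:\mu}\emptyset = \emptyset$), and it is strict: if $A\to (\emptyset)$ is any map then each $A_x\to\emptyset$ forces $A_x=\emptyset$. Third, for a small family $(A^j)_{j:J}$ of ultrasheaves, set $(\bigsqcup_j A^j)_x := \bigsqcup_j A^j_x$; the structure map $\bigsqcup_j A^j_a \to \int_{s:\mu}\bigsqcup_j A^j_{b_s}$ is defined using the splitting of the ultraproduct of a coproduct recalled above, and one checks functoriality and that this is the coproduct in $\vSh(X)$ (using that maps of ultrasheaves are objectwise). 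Disjointness and pullback-stability reduce to the corresponding facts in $\Set$ objectwise, since finite limits are objectwise. Fourth, for an equivalence relation $R \rightrightarrows A$ in $\vSh(X)$, form the objectwise quotient $(A/R)_x := A_x/R_x$; the structure map $A_a/R_a \to \int_{s:\mu}(A_{b_s}/R_{b_s})$ is induced from $A(f)$ using that $\int_{s:\mu}(-)$ sends the quotient $A_{b_s}\twoheadrightarrow A_{b_s}/R_{b_s}$ to a quotient and that $R(f)$ covers the kernel pair — i.e. using commutation of ultraproducts with reflexive coequalizers. One then checks this is the quotient in $\vSh(X)$ and that it is effective and pullback-stable, again objectwise.

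The main obstacle is the careful bookkeeping for the coproduct structure map: one must show that the splitting isomorphism $\int_{s:\mu}\bigsqcup_j X^j_s \cong \bigsqcup_{(j_s)_{s:\mu}}\int_{s:\mu}X^{j_s}_s$ is natural enough that the induced structure maps on $\bigsqcup_j A^j$ satisfy the associativity law for ultrasheaves (this uses the coherence between this splitting and the associators $a_{(\nu_s)_{s:\mu}}$ of \eqref{eq:ass-unit}), and similarly that the reindexing/diagonal maps interact correctly. Everything else is a routine transport of the $\Set$-level pretopos axioms through the objectwise description, once one knows the relevant ultraproduct comparison maps are isomorphisms. I would state the splitting lemma for $\int_\mu$ and coproducts explicitly (it is the coproduct analogue of Lemma~\ref{lem:up-prod}) and then treat the verification of each pretopos axiom as an objectwise check.
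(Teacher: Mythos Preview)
Your overall strategy---reduce everything to objectwise statements in $\Set$ via \Cref{lem:ultrasheaf-exactness}---is the same as the paper's, but you are doing much more work than needed, and the ``key observation'' you highlight is both unnecessary and somewhat muddled.

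The paper's argument is simply this: by \Cref{lem:ultrasheaf-exactness}, \emph{all} small colimits and finite limits in $\vSh(X)$ exist and are computed objectwise. The pretopos axioms are now pure \emph{properties} (pullback-stability of colimits, effectivity of equivalence relations), expressed entirely in terms of certain canonical maps being isomorphisms; since both the colimits and the finite limits involved are computed objectwise, these maps are isomorphisms iff they are so at each object, and they are because $\Set$ is an infinitary pretopos. No further construction or coherence check is required.

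By contrast, you spend effort re-establishing the existence of coproducts and quotients by hand, constructing their ultrasheaf structure maps via special splitting formulas, and flagging associativity coherences as the ``main obstacle''. All of this is already absorbed by \Cref{lem:ultrasheaf-exactness}: the comparison map $\colim_i\int_{s:\mu}\to\int_{s:\mu}\colim_i$ always exists and suffices to equip the objectwise colimit with an ultrasheaf structure; it never needs to be an isomorphism. Your claim that it \emph{is} an isomorphism for small coproducts is in fact false for infinite $J$ (the splitting you write is indexed over $J^\mu$, not $J$, which is exactly the point), and your displayed comparison map is written in the wrong direction. None of this breaks the proof---the objectwise checks you describe for disjointness, stability, and effectivity are correct---but the detour through comparison isomorphisms adds confusion without buying anything.
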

\begin{proof}
    The proof is essentially the same as \cite[Proposition 5.4.6]{Lurie}.
    Let $\alpha : A \to B$ be a map of ultrasheaves over $X$ and $(B^{i}\to B)_{i:\eI}$ an $\eI$-diagram in $\vSh(X)/B$ for $\eI$ a small category. We need to show that the canonical map
    \[\colim_{i:\eI}(B_i\times_{B}A)\longrightarrow \colim_{i:\eI}(B_i)\times_{B}A\]
    is an isomorphism of sheaves as it is a bijection on each object of $X$. This follows from the fact that small colimits and pullbacks are computed objectwise by \Cref{lem:ultrasheaf-exactness} and that small colimits and pullbacks commute in $\Set$.

    Let now $R\subseteq A\times A$ be an equivalence relation in $\vSh(X)$.
    We need to show that the canonical map
    \[R\longrightarrow A \times_{A/R} A\]
    is an isomorphism, where $A/R \coloneqq \coeq(R\rightrightarrows A)$ is the quotient of $A$ by $R$.
    Again, this follows from the fact that the quotient $A/R$ and the pullback $A \times_{A/R} A$ are computed objectwise by \Cref{lem:ultrasheaf-exactness} and that equivalence relations are effective in $\Set$.
\end{proof}

Hence the category $\vSh(X)$ only lacks accessibility to be a topos. In general $\vSh(X)$ is neither locally small nor accessible as shown by the example $\vSh(X)\simeq\Set^{X}$ with $X$ a discrete virtual ultracategory on a large set of objects. This motivates the following definition.

\begin{defn}
    A virtual ultracategory is said \emph{bounded} if its category of sheaves is accessible (and in particular locally small). We will denote $\vUltb$ the 2-full subcategory of bounded virtual ultracategories.
\end{defn}

\begin{rmk}\label{rmk:acc}
    Our notion of bounded v-ultracategory lacks a justification and might well be the wrong notion (we chose this definition as it is enough to express the 2-adjunction we aim for). We leave open the definition of a sensible notion of bounded (and accessible) v-ultracategories.
\end{rmk}

\begin{cor}
    The representable 2-functor \[\vSh = \vUlt(-,\Set) : \vUlt\longrightarrow\CAT^{\op}\] restricts to a 2-functor \[\vSh : \vUltb\longrightarrow\GTop.\]
\end{cor}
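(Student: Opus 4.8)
The plan is to read off the statement from Giraud's characterization of topoi (recalled in the conventions: a Grothendieck topos is exactly an accessible infinitary pretopos) together with the exactness facts already established, and then to handle the $1$- and $2$-cells by the formal properties of the representable $2$-functor $\vSh = \vUlt(-,\Set)$.

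\emph{Objects.} For $X$ a bounded virtual ultracategory, the preceding proposition shows that $\vSh(X)$ is an infinitary pretopos, while by the very definition of boundedness $\vSh(X)$ is accessible (hence locally small). By Giraud's theorem $\vSh(X)$ is therefore a Grothendieck topos, so $\vSh$ sends objects of $\vUltb$ to objects of $\GTop$.

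\emph{$1$-cells.} Given a functor of virtual ultracategories $F : X\to Y$, the representable $2$-functor sends it to the restriction functor $\vSh(F) = (-\circ F) : \vSh(Y)\to\vSh(X)$, $A\mapsto A\circ F$. I claim this is the inverse image part of a geometric morphism $\vSh(X)\to\vSh(Y)$. By \Cref{lem:ultrasheaf-exactness} finite limits and small colimits in both $\vSh(X)$ and $\vSh(Y)$ are computed objectwise, and precomposing by $F$ merely reindexes the family of stalks along $F_0 : X_0\to Y_0$; hence $\vSh(F)$ preserves finite limits and all small colimits. Since $\vSh(X)$ and $\vSh(Y)$ are Grothendieck topoi, they are locally presentable, so a cocontinuous functor between them is accessible and admits a right adjoint by the adjoint functor theorem; a lex functor with a right adjoint is precisely an inverse image functor, so $\vSh(F)$ does determine a geometric morphism $\vSh(X)\to\vSh(Y)$.

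\emph{$2$-cells and functoriality.} Since $\vSh = \vUlt(-,\Set)$ is already a strict $2$-functor to $\CAT^{\op}$, it remains only to check that $2$-cells land where they should. A natural transformation $\alpha : F\Rightarrow G$ between functors $X\to Y$ is whiskered to a natural transformation between $\vSh(F)$ and $\vSh(G)$, which is by definition exactly a $2$-cell of $\GTop$ between the associated geometric morphisms (recall that $2$-arrows in $\GTop$ are natural transformations between inverse images). Vertical and horizontal composition, and strictness, are inherited from $\CAT^{\op}$, using that ``geometric morphism $\mapsto$ inverse image'' realizes $\GTop$ as a faithful, locally full sub-$2$-category of $\CAT^{\op}$. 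This produces the desired restriction $\vSh : \vUltb\to\GTop$.

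The only step that is not purely formal is the verification that $\vSh(F)$ is lex and cocontinuous --- that is, that (co)limits are genuinely objectwise and that reindexing along $F_0$ is harmless --- but this is immediate from \Cref{lem:ultrasheaf-exactness}, so I do not expect any real obstacle; the content of the corollary is entirely carried by the preceding proposition (that $\vSh(X)$ is an infinitary pretopos) together with Giraud's theorem.
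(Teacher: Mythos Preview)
Your proof is correct and follows essentially the same approach as the paper: show that $\vSh(X)$ is a topos via Giraud (infinitary pretopos plus accessibility from boundedness), then use \Cref{lem:ultrasheaf-exactness} to see that precomposition by $F$ is lex cocontinuous, with the 2-functoriality inherited from the representable. The only cosmetic difference is that, under the paper's conventions, a geometric morphism is \emph{defined} by its inverse image as a lex cocontinuous functor, so your appeal to the adjoint functor theorem is superfluous (though not wrong).
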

\begin{proof}
    Let $F : X\to Y$ be a functor between bounded v-ultracategories. Precomposition by $F$ gives a functor of categories $\vSh(Y)\to\vSh(X)$ and this functor is lex and cocontinuous by \Cref{lem:ultrasheaf-exactness}. Hence, as $X$ and $Y$ are bounded, it induces a geometric morphism $\vSh(F) : \vSh(X)\to\vSh(Y)$. The rest of the 2-functorial structure is straightforward.
\end{proof}

\section{Representation by groupoids}\label{sec:desc}

The next two sections are dedicated to the proof of our reconstruction theorem in the case $X$ is a small separating set of points: we show the following proposition.

\begin{prop}\label{prop:small-sep-case}
    Let $X$ be a small separating set of points of a topos $\E$, the evaluation functor $\ev : \E \to \vSh(\vpt(\E;X))$ from \Cref{def:ev-functor} is an equivalence of categories.
\end{prop}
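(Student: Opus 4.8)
The statement to prove is \Cref{prop:small-sep-case}: for $X$ a small separating set of points of a topos $\E$, the evaluation functor $\ev : \E \to \vSh(\vpt(\E;X))$ is an equivalence. The strategy I would pursue is to realize both sides as categories of descent data along a suitable resolution, and to use the hypothesis that $X$ is separating to promote a comparison functor into an equivalence. Concretely, since $X$ is a small separating set of points, the family of stalk functors $(\iota(x)^* : \E \to \Set)_{x \in X}$ is jointly conservative; by the theory of such presentations (Moerdijk, or Lurie \cite[Section 3.4--3.5]{Lurie} in the coherent case), one can build a topological groupoid $\mathbb{G}$ — with object space a disjoint union of copies of $\beta$-like spaces indexed by $X$, and morphism space encoding the natural transformations between stalks — such that $\E$ is equivalent to the category of equivariant sheaves $\Sh_{\mathbb{G}}(\mathbb{G}_0)$, i.e. $\E \simeq \Desc(\mathbb{G})$. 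This is precisely what \Cref{sec:ample} is announced to construct, so in the proof I would simply cite that forthcoming construction.

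**Key steps, in order.** First, I would record that $\ev$ is lex and cocontinuous (already noted in the Corollary after \Cref{lem:ultrasheaf-exactness}), so it is the inverse image of a geometric morphism $\vSh(\vpt(\E;X)) \to \E$; the task is to show this geometric morphism is an equivalence, equivalently that $\ev$ is fully faithful and essentially surjective. Second, I would identify $\vSh(\vpt(\E;X))$ with a category of descent data: an ultrasheaf over $\vpt(\E;X)$ is a family $(A_x)_{x \in X}$ together with transition maps $A_a \to \int_{s:\mu} A_{b_s}$ for every ultraarrow, compatibly with composition — and unwinding the definition of $\vpt(\E;X)$, an ultraarrow $a \ultrato (b_s)_{s:\mu}$ is a natural transformation $\iota(a)^* \Rightarrow \int_{s:\mu} \iota(b_s)^*$. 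So an ultrasheaf is exactly the data of a set for each point together with functorial actions by all these "ultratransition" natural transformations — which is descent data for the groupoid $\mathbb{G}$ once one checks the ultraproduct structure on $\mathbb{G}_0$ matches. Third, I would invoke the descent/effectivity result announced as \Cref{cor:T0ample-desc} (the "$T_0$-ample" descent criterion of \Cref{sec:desc}) applied to the canonical functor $\vpt(\E;X) \to (\text{v-ultracategory built from } \mathbb{G})$, to conclude that the comparison between $\E \simeq \Desc(\mathbb{G})$ and $\vSh(\vpt(\E;X))$ is an equivalence. Fourth, I would check that under these identifications the comparison functor is exactly $\ev$ (both send $E \in \E$ to the family of stalks $(\iota(x)^* E)_x$ with its canonical ultra-action), which pins down the equivalence as $\ev$ itself.

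**Main obstacle.** The genuinely hard part is not the formal descent bookkeeping but the construction of the topological groupoid $\mathbb{G}$ with the right properties — it must be "ample" / "$T_0$-ample" in the sense needed to apply the descent result, meaning the morphism space must be rich enough to capture *all* ultratransition natural transformations between stalks, not merely the principal ones. This is where the characterization of étale maps via ultraconvergence (\Cref{Thm:étale-ucvg}) and the analysis of $\beta(E/T)$ (\Cref{lem:principal-over}) do real work: one needs that the relevant sheaves on $\mathbb{G}_0$ are classified by ultraconvergence data, so that equivariant sheaves on $\mathbb{G}$ coincide with ultrasheaves on $\vpt(\E;X)$. I would defer this construction wholesale to \Cref{sec:ample} and treat the present proposition as the assembly step: combine (i) $\E \simeq \Desc(\mathbb{G})$ from the groupoid presentation, (ii) $\Desc(\mathbb{G}) \simeq \vSh(\vpt(\E;X))$ from \Cref{cor:T0ample-desc} applied to the ample groupoid of \Cref{sec:ample}, and (iii) the identification of the composite with $\ev$. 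A secondary subtlety worth flagging in the proof is size: one must check $\vpt(\E;X)$ is bounded (so that $\vSh$ of it is a genuine topos) — but since $X$ is small and separating, the ultrasheaf category is generated under colimits by the representables-at-points, hence accessible and locally small, so \Cref{lem:ultrasheaf-exactness} together with Giraud gives that $\vSh(\vpt(\E;X))$ is a topos, legitimizing the geometric-morphism language used throughout.
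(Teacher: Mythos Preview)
Your overall strategy matches the paper's: represent $\E$ by a topological groupoid, use descent on both the topos side and the v-ultracategory side, and link the two via the 0-dimensional case (\Cref{prop:0dimcase}, cf.\ \Cref{rmk:equivsheaves-equiv}). The assembly step you describe is exactly \Cref{cor:ample-final}, and the paper's proof of \Cref{prop:small-sep-case} is indeed just the one-line invocation of that corollary after the choices of \Cref{cons:XMk}.

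Two corrections on details. First, the groupoid is \emph{not} built from $\beta$-like spaces in Lurie's style; the paper instead follows Butz--Moerdijk and takes $T_0^{\amp}$ to be a space of \emph{amply indexed models} (points of $X$ equipped with a partial surjection from a fixed cardinal $\kappa$, with ample complement), and invokes Wrigley's criterion to show this groupoid represents $\E$. The ``ample'' condition is on the indexings, not a $T_0$-separation property; its sole purpose is to guarantee the lifting hypothesis of \Cref{prop:desc-vUlt} (\Cref{lem:ample}). Second, your step (ii) conflates two distinct equivalences the paper keeps separate: \Cref{cor:T0ample-desc} (plus \Cref{lem:T1ample}) gives $\vSh(\vpt(\E;X))\simeq\vSh_{eq}(\vpt(T^{\amp}_\bullet))$ by effective descent in $\vUlt$, and then the 0-dimensional case gives $\vSh_{eq}(\vpt(T^{\amp}_\bullet))\simeq\Sh_{eq}(T^{\amp}_\bullet)$; only after both do you meet $\E$. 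Finally, the boundedness digression is unnecessary here: the equivalence is established directly, and boundedness of $\vpt(\E)$ is a \emph{consequence} of the theorem, not an input.
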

One can admit this proposition and skip to the \Cref{sec:ccl}.

The proof of \Cref{prop:small-sep-case} will rely on representations of topoi by topological groupoids. Representing topoi by topological groupoids is a powerful and well-established method to reconstruct a topos from its points, by considering equivariant sheaves over some topological groupoid of its points. It was first introduced by Butz--Mordijk in \cite{BM1} but we will use a more general version given by Wrigley in \cite{Josh}. 
In this section, we recall results about descent and representation by topological groupoids, we then explain our strategy to prove \Cref{prop:small-sep-case} and give an essential result on descent of v-ultracategories in \Cref{prop:desc-vUlt}.

In the following, we denote by $\K$ a strict 2-category with pseudopullbacks; in our case of interest, $\K$ will be $\TopSp$, $\GTop$, or $\vUlt$. 

\begin{cons}\label{cons:strict2pb}
    The 2-categories $\TopSp$ and $\vUlt$ have strict 2-pullbacks that can be constructed in a similar fashion to strict 2-pullbacks of categories.
    \begin{enumerate}
        \item For two continuous maps $f : \eX\to \eZ$ and $g : \eY\to \eZ$ of topological spaces, the 2-pullback of $f$ and $g$ is the subspace of $\eX\times \eY$ given by the pairs $(x,y)$ with $f(x)$ and $g(y)$ equivalent for the specialization order (\ie $f(x)$ and $g(y)$ are contained in the same opens).
        Note that this is slightly different from the usual 1-pullback of topological spaces, but that this difference disappears if $\eZ$ is T0-separated.
        \item For two functors of v-ultracategories $F : \eX\to \eZ$ and $G : \eY\to \eZ$, the 2-pullbacks is given by the v-ultracategory whose objects are triplets $(\theta,x,y)$ of an object $x\in \eX_0$, an object $y\in \eY_0$, and an isomorphism $\theta\in \eZ(F(x),G(y))$; an ultraarrow from $(\theta,x,y)$ to $(\theta_s,x_s,y_s)_{s:\mu}$ is given by a pair $(f,g)$ of an ultraarrow $f\in \eX(x,(x_s)_{s:\mu})$ and an ultraarrow $g\in \eY(y,(y_s)_{s:\mu})$ such that $G(g)\circ\theta = (\theta_s)_{s:\mu}\circ F(f)$.
    \end{enumerate}
    These two constructions are strict 2-pullbacks, and the strict 2-functor $\vpt : \TopSp\to\vUlt$ preserves these strict 2-pullbacks.
\end{cons}

\begin{defn}
    A \emph{codescent-diagram} in $\K$ is a pseudofunctor $X : \Delta_2^-\to\K$, where $\Delta_2^-$ denotes the subcategory of the cosimplicial category generated by the following maps
    \[\begin{tikzcd}
        {[2]} & {[1]} & {[0]}
        \arrow["d_1"{description}, from=1-1, to=1-2]
        \arrow["d_2"{description}, shift left=5, from=1-1, to=1-2]
        \arrow["d_0"{description}, shift right=5, from=1-1, to=1-2]
        \arrow["d_1"{description}, shift left=3, from=1-2, to=1-3]
        \arrow["d_0"{description}, shift right=3, from=1-2, to=1-3]
        \arrow["s_0"{description}, shorten <=3pt, shorten >=3pt, from=1-3, to=1-2]
    \end{tikzcd}\] that we see as a 2-category with only identity 2-arrows.
    A codescent-diagram will be thought of as the structure of an internal category: we write
    \[\begin{tikzcd}
    	{X_2} & {X_1} & {X_0}
    	\arrow["m"{description}, from=1-1, to=1-2]
    	\arrow["{\pi_2}"{description}, shift left=5, from=1-1, to=1-2]
    	\arrow["{\pi_1}"{description}, shift right=5, from=1-1, to=1-2]
    	\arrow["s"{description}, shift left=3, from=1-2, to=1-3]
    	\arrow["t"{description}, shift right=3, from=1-2, to=1-3]
    	\arrow["u"{description}, shorten <=3pt, shorten >=3pt, from=1-3, to=1-2]
    \end{tikzcd} \hspace{10pt} \text{ instead of } \hspace{10pt}  \begin{tikzcd}
    	{X([2])} & {X([1])} & {X([0])}
    	\arrow["d_1"{description}, from=1-1, to=1-2]
    	\arrow["d_2"{description}, shift left=5, from=1-1, to=1-2]
    	\arrow["d_0"{description}, shift right=5, from=1-1, to=1-2]
    	\arrow["d_1"{description}, shift left=3, from=1-2, to=1-3]
    	\arrow["d_0"{description}, shift right=3, from=1-2, to=1-3]
    	\arrow["s_0"{description}, shorten <=3pt, shorten >=3pt, from=1-3, to=1-2]
    \end{tikzcd},\]
    and such a codescent-diagram will be denoted by $(X_{\bullet})$.

\end{defn}

\begin{ex}\leavevmode
    \begin{enumerate}
        \item Any internal category (or internal groupoid) induces a codescent-diagram.
        \item Any 1-arrow $F : X_0\to \eZ$ in $\K$ gives an internal groupoid, and so a codescent-diagram, by looking at its iterated pseudokernel:
    \[\begin{tikzcd}
    	{X_0\times_{\eZ}X_0\times_{\eZ}X_0} & {X_0\times_{\eZ}X_0} & {X_0}
    	\arrow[from=1-1, to=1-2]
    	\arrow[shift left=4, from=1-1, to=1-2]
    	\arrow[shift right=4, from=1-1, to=1-2]
    	\arrow[shift left=2, shorten <=5pt, shorten >=5pt, from=1-2, to=1-1]
    	\arrow[shift right=2, shorten <=5pt, shorten >=5pt, from=1-2, to=1-1]
    	\arrow[shift right=2, from=1-2, to=1-3]
    	\arrow[shift left=2, from=1-2, to=1-3]
    	\arrow[shorten <=6pt, shorten >=6pt, from=1-3, to=1-2]
    \end{tikzcd}.\]
    We call this internal groupoid the \emph{groupoid kernel of $F$}.
    In $\TopSp$ and $\vUlt$ we can use the strict 2-pullbacks of \Cref{cons:strict2pb} to define the pseudokernel and then the groupoid laws are satisfied strictly, this will always be the case in the following.
    \end{enumerate}
\end{ex}
    
By taking the groupoid kernel of $F$, we attempt to give enough data above $X_0$ to recover $\eZ$. There is a reverse construction: starting from a codescent-diagram we can try to take its ‘‘quotient’’; this is the following notion of \emph{universal descent-cocone}, the correct notion of ‘‘coequalizer’’ for a codescent-diagram.

\begin{defn}\label{def:desc-cocone-cat}\leavevmode
    For $\eZ$ an object of $\K$, a \emph{descent-cocone of apex $\eZ$} over a codescent-diagram $(X_{\bullet})$ is a pair $(F,\theta)$ of a 1-arrow $F:X_0\to\eZ$ and an invertible 2-arrow $\theta : F\circ s \stackrel{\sim}\Rightarrow F\circ t$ satisfying the two following equations:
\begin{enumerate}
    \item a \emph{unit condition},
    \[\begin{tikzcd}
	&& {X_0} \\
	{X_0} & {X_1} && \eZ \\
	&& {X_0}
	\arrow["F", from=1-3, to=2-4]
	\arrow["\theta", shorten <=20pt, shorten >=18pt, Rightarrow, from=1-3, to=3-3]
	\arrow[""{name=0, anchor=center, inner sep=0}, "\id", curve={height=-18pt}, from=2-1, to=1-3]
	\arrow["u"', from=2-1, to=2-2]
	\arrow[""{name=1, anchor=center, inner sep=0}, "\id"', curve={height=18pt}, from=2-1, to=3-3]
	\arrow["s", from=2-2, to=1-3]
	\arrow["t"', from=2-2, to=3-3]
	\arrow["F"', from=3-3, to=2-4]
	\arrow["\wr"'{pos=0.2}, shorten <=5pt, shorten >=5pt, Rightarrow, from=0, to=2-2]
	\arrow["\wr"'{pos=0.6}, shorten <=5pt, shorten >=5pt, Rightarrow, from=2-2, to=1]
\end{tikzcd} \hspace{10pt} \text{ = } \hspace{10pt} \id_F\]
    \item and a \emph{cocycle condition}.
\[\begin{tikzcd}
	& {X_1} & {X_0} \\
	{X_2} && {X_0} & \eZ \\
	& {X_1} & {X_0}
	\arrow["s", from=1-2, to=1-3]
	\arrow["t"', from=1-2, to=2-3]
	\arrow["\wr"', shorten <=20pt, shorten >=18pt, Rightarrow, from=1-2, to=3-2]
	\arrow["\theta", shorten <=3pt, shorten >=3pt, Rightarrow, from=1-3, to=2-3]
	\arrow["F", from=1-3, to=2-4]
	\arrow["{\pi_2}", from=2-1, to=1-2]
	\arrow["{\pi_1}"', from=2-1, to=3-2]
	\arrow["F"{description}, from=2-3, to=2-4]
	\arrow["\theta", shorten <=3pt, shorten >=3pt, Rightarrow, from=2-3, to=3-3]
	\arrow["s", from=3-2, to=2-3]
	\arrow["t"', from=3-2, to=3-3]
	\arrow["F"', from=3-3, to=2-4]
\end{tikzcd} \hspace{10pt} \text{ = } \hspace{10pt} \begin{tikzcd}
	& {X_1} & {X_0} \\
	{X_2} & {X_1} && \eZ \\
	& {X_1} & {X_0}
	\arrow["s", from=1-2, to=1-3]
	\arrow["\wr"'{pos=0.4}, shorten <=3pt, shorten >=3pt, Rightarrow, from=1-2, to=2-2]
	\arrow["F", from=1-3, to=2-4]
	\arrow["\theta", shorten <=20pt, shorten >=18pt, Rightarrow, from=1-3, to=3-3]
	\arrow["{\pi_2}", from=2-1, to=1-2]
	\arrow["m"{description}, from=2-1, to=2-2]
	\arrow["{\pi_1}"', from=2-1, to=3-2]
	\arrow["s"', from=2-2, to=1-3]
	\arrow["\wr"'{pos=0.4}, shorten <=3pt, shorten >=3pt, Rightarrow, from=2-2, to=3-2]
	\arrow["t", from=2-2, to=3-3]
	\arrow["t"', from=3-2, to=3-3]
	\arrow["F"', from=3-3, to=2-4]
\end{tikzcd}\]
\end{enumerate}
    The descent-cocones of apex $\eZ$ over $(X_{\bullet})$ form a category $\Desc(X_{\bullet};\eZ)$, a morphism from $(F,\theta)$ to $(F',\theta')$ is given by a 2-arrow $\alpha : F\Rightarrow F'$ such that $(\alpha\cdot t)\circ\theta = \theta'\circ(\alpha\cdot s)$. The construction $\Desc(X_{\bullet};\eZ)$ is functorial in $\eZ$, in the sense that it induces a strict 2-functor $\Desc(X_{\bullet};-) : \K\to \Cat$.
\end{defn}
\begin{defn}
    A descent-cocone $(F,\theta)\in\Desc(X_{\bullet};\eZ)$ is \emph{universal} if it weakly represents the functor $\Desc(X_{\bullet};-) : \K\to \Cat$ above, \ie if for any object $\eY$ the functor $(F,\theta)^* : \K(\eZ,\eY)\to\Desc(X_{\bullet};\eY)$ is an equivalence of categories.
    Any functor $F : X_0\to \eZ$ forms a descent-cocone over its own groupoid kernel, we say that $F$ is an \emph{effective descent map} if this descent cocone is universal.
\end{defn}

\begin{rmk}
    The notion of universal descent-cocone is the appropriate notion of ‘‘coequalizer’’ for codescent-diagrams. It is actually a colimit of the codescent-diagram for a suitable weight, we refer to \cite[Section 2]{CMV} for more details.
    Effective descent map is the appropriate notion of ‘‘effective epimorphism’’, and we will think of an effective descent map $F$ as a well-behaved quotient: the groupoid kernel of $F$ enables us to recover $F$ in a canonical way.
\end{rmk}

\begin{defn}\leavevmode
    \begin{enumerate}
        \item A \emph{topological groupoid} is a groupoid internal to $\TopSp$. 
        An \emph{equivariant sheaf} over a topological groupoid $(T_{\bullet})$ is a descent-cocone over $(\Sh(T_{\bullet}))$ with apex $\SetO$ in the 2-category $\GTop$, as explained in \Cref{def:desc-cocone-cat} they form a category
        \[\Sh_{eq}(T_{\bullet})\coloneqq\Desc(\Sh(T_{\bullet});\SetO).\]
        Concretely, an equivariant sheaf over $(T_{\bullet})$ consists of a sheaf over the space of objects $T_0$ together with a continuous action of the space of arrows $T_1$.
        A topos $\E$ is said to be \emph{represented} by a topological groupoid $(T_{\bullet})$ if there is a universal descent-cocone over $(T_{\bullet})$ with apex $\E$. Note that then the topos $\E$ is determined by
        \[\E \simeq \GTop(\E,\SetO) \simeq \Desc(\Sh(T_{\bullet});\SetO) \eqqcolon \Sh_{eq}(T_{\bullet}).\] 
        
        \item Similarly, an \emph{equivariant ultrasheaf} over a $\vUlt$-groupoid $(X_{\bullet})$ is a descent-cocone over $(X_{\bullet})$ with apex $\Set$ in the 2-category $\vUlt$, they form a category \[\vSh_{eq}(X_{\bullet})\coloneqq\Desc(X_{\bullet};\Set).\]
        And a v-ultracategory $\eZ$ is said to be \emph{represented} by a $\vUlt$-groupoid $(X_{\bullet})$ if there is a universal descent-cocone over $(X_{\bullet})$ with apex $\eZ$.
    \end{enumerate}
\end{defn}

\begin{rmk}\label{rmk:equivsheaves-equiv}
    We have shown in \Cref{prop:0dimcase} that giving a sheaf over a topological space is the same as giving an ultrasheaf over the v-ultracategory of its points. Hence, the category of equivariant sheaves over a topological groupoid $(T_{\bullet})$ is equivalent to the category of equivariant ultrasheaves over $(\vpt(T_{\bullet}))$.
\end{rmk}

\begin{rmk}
    If $\E$ is represented by a topological groupoid $(T_{\bullet})$, then the set of points of $T_0$ induces a separating set of points of $\E$ and so $\E$ has enough points. In fact, as shown by Butz and Moerdijk in \cite{BM1}, a topos with enough points can always be represented by a topological groupoid. 
\end{rmk}

\begin{cons}\label{cons:strategyproof}
    Recall that our objective is to prove \Cref{prop:small-sep-case}. We know from \Cref{prop:0dimcase} that the result holds for topological spaces, our strategy will thus consist of representing the topos by a topological groupoid to reduce to the 0-dimensional case. 

    Suppose that $\E$ is represented by $(T_{\bullet})$, \ie there is a universal descent-cocone $(\pi,\theta)$ above $(\Sh(T_{\bullet}))$.
    \[\begin{tikzcd}[column sep=small]
    	{\Sh(T_1\underset{T_0}{\times}T_1)} & {\Sh(T_1)} & {\Sh(T_0)} & \E
    	\arrow[from=1-1, to=1-2]
    	\arrow[shift right=2, from=1-2, to=1-3]
    	\arrow[shift left=2, from=1-2, to=1-3]
    	\arrow[shorten <=3pt, shorten >=3pt, from=1-3, to=1-2]
    	\arrow["\pi", curve={height=-20pt}, dashed, from=1-3, to=1-4]
    \end{tikzcd} \hspace{25pt} \begin{tikzcd}[column sep=small]
    	{\Sh(T_1)} & {\Sh(T_0)} \\
    	{\Sh(T_0)} & \E
    	\arrow[from=1-1, to=1-2]
    	\arrow[from=1-1, to=2-1]
    	\arrow["\theta","\sim"', shorten <=9pt, shorten >=9pt, Rightarrow, from=2-1, to=1-2]
    	\arrow["\pi"', from=2-1, to=2-2]
    	\arrow["\pi", from=1-2, to=2-2]
    \end{tikzcd}\]
    Going through the functor $\vpt$ we get a descent-cocone $(\vpt(\pi),\vpt(\theta))$ over $(\vpt(T_{\bullet}))$.
    \[\begin{tikzcd}[column sep=small]
    	{\vpt(T_1)\underset{\vpt(T_0)}\times\vpt(T_1)} & {\vpt(T_1)} & {\vpt(T_0)} & {\vpt(\E;T_0)}
    	\arrow[from=1-1, to=1-2]
    	\arrow[shift right=2, from=1-2, to=1-3]
    	\arrow[shift left=2, from=1-2, to=1-3]
    	\arrow[shorten <=3pt, shorten >=3pt, from=1-3, to=1-2]
    	\arrow["\vpt(\pi)", curve={height=-20pt}, dashed, from=1-3, to=1-4]
    \end{tikzcd} \hspace{25pt} \begin{tikzcd}[column sep=small]
    	{\vpt(T_1)} & {\vpt(T_0)} \\
    	{\vpt(T_0)} & {\vpt(\E;T_0)}
    	\arrow[from=1-1, to=1-2]
    	\arrow[from=1-1, to=2-1]
    	\arrow["\vpt(\theta)","\sim"', shorten <=9pt, shorten >=9pt, Rightarrow, from=2-1, to=1-2]
    	\arrow["\vpt(\pi)"', from=2-1, to=2-2]
    	\arrow["\vpt(\pi)", from=1-2, to=2-2]
    \end{tikzcd}\]
\end{cons}
    
\begin{question}\label{question}
    The cocone $(\pi,\theta)$ is universal by hypothesis, is the descent-cocone $(\vpt(\pi),\vpt(\theta))$ also universal? \ie is $\vpt(\E,T_0)$ represented by the $\vUlt$-groupoid $(\vpt(T_{\bullet}))$?
\end{question}

If it were the case, we would get the following sequence of equivalences
\[\vSh(\vpt(\E;T_0))\simeq\vSh_{eq}(\vpt(T_{\bullet}))\simeq\Sh_{eq}(T_{\bullet})\simeq\E,\]
thus, to prove \Cref{prop:small-sep-case}, it is enough to construct a topological groupoid representing $\E$ that answers positively to \Cref{question}.
As we expect the functor $\vpt$ to be a right adjoint (see \Cref{cor:final-adj}), we cannot expect it to preserve colimits, and so we cannot expect \ref{question} to hold for an arbitrary topological groupoid. However, we will construct in the next section a particular groupoid representing $\E$ that satisfies \ref{question}; the main ingredient will be the following technical result, that gives a sufficient condition for a functor of v-ultracategories to be effective descent.

\begin{prop}\label{prop:desc-vUlt}
    Let $\pi : X_0\to\eZ$ be a functor of virtual ultracategories, surjective on objects and satisfying the following lifting property: any ultraarrow $f : \pi(x)\ultrato (b_s)_{s:\mu}$ can be lifted to an ultraarrow $x\ultrato (y_s)_{s:\mu}$ of the same type. Then $\pi$ is an effective descent functor.
\end{prop}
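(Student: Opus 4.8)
The plan is to exhibit, for every v-ultracategory $\eY$, an explicit quasi-inverse to the comparison functor $(\pi,\theta_\pi)^* : \vUlt(\eZ,\eY)\to\Desc(X_\bullet;\eY)$, where $(X_\bullet)$ is the groupoid kernel of $\pi$ built from the strict 2-pullbacks of \Cref{cons:strict2pb}: $X_1=X_0\times_{\eZ}X_0$ has objects $(\theta,x,x')$ with $\theta:\pi(x)\xrightarrow{\sim}\pi(x')$ an isomorphism of $\pt(\eZ)$, $X_2$ the analogous triple pullback, $s(\theta,x,x')=x$, $t(\theta,x,x')=x'$, $u(x)=(\id_{\pi(x)},x,x)$, $m$ composes the isomorphisms, and $\theta_\pi:\pi\circ s\Rightarrow\pi\circ t$ is the tautological 2-cell with component $\theta$ at $(\theta,x,x')$. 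Using choice, fix for every object $z$ of $\eZ$ a preimage $x_z$ with $\pi(x_z)=z$, which is possible since $\pi$ is surjective on objects.

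Given a descent-cocone $(F,\psi)\in\Desc(X_\bullet;\eY)$ — so $F:X_0\to\eY$ and $\psi:F\circ s\Rightarrow F\circ t$ is invertible and satisfies the unit and cocycle conditions of \Cref{def:desc-cocone-cat}, which give $\psi_{u(x)}=\id_{F(x)}$ and $\psi_{(\id,x,x'')}=\psi_{(\id,x',x'')}\circ\psi_{(\id,x,x')}$ — I would build $\overline F:\eZ\to\eY$ by $\overline F(z):=F(x_z)$ on objects and, on an ultraarrow $g:z\ultrato(w_s)_{s:\mu}$, by choosing via the lifting hypothesis some $\tilde g:x_z\ultrato(y_s)_{s:\mu}$ in $X_0$ with $\pi(\tilde g)=g$ (so $\pi(y_s)=w_s$ for $\mu$-all $s$) and setting $\overline F(g):=(\lambda_s)_{s:\mu}\circ F(\tilde g)$ with $\lambda_s:=\psi_{(\id,\,y_s,\,x_{w_s})}:F(y_s)\xrightarrow{\sim}F(x_{w_s})$. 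The first substantive point is that $\overline F(g)$ is independent of the chosen lift: two lifts $\tilde g,\tilde g'$ of $g$ assemble into an ultraarrow $(\tilde g,\tilde g'):u(x_z)\ultrato(\id_{w_s},y_s,y'_s)_{s:\mu}$ of $X_1$, and combining naturality of $\psi$ along this ultraarrow, the unit condition, and the cocycle condition relating $\psi_{(\id,y_s,x_{w_s})}$, $\psi_{(\id,y'_s,x_{w_s})}$ and $\psi_{(\id,y_s,y'_s)}$ forces the two candidate values to coincide. Preservation of identities is then immediate ($\id_{x_z}$ lifts $\id_z$ and $\psi_{u(x_z)}=\id$), and preservation of composition is proved by the same mechanism: given $g:z\ultrato(w_s)_{s:\mu}$ and a $\mu$-family $(h_s)_{s:\mu}$, lift $g$ to $\tilde g:x_z\ultrato(y_s)_{s:\mu}$, lift each $h_s$ both from $y_s$ (to $\tilde h_s$) and from $x_{w_s}$ (to $\hat h_s$), note that $(\tilde h_s,\hat h_s)$ is an ultraarrow of $X_1$ out of $(\id_{w_s},y_s,x_{w_s})$, and feed naturality of $\psi$ together with the cocycle condition into the associativity law of $\eY$ (using the identities of \Cref{cons:ass-unit-sum} for the sum ultrafilters).

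It remains to see that $\overline F$ is a genuine quasi-inverse. For essential surjectivity, the arrows $\psi_{(\id,\,x,\,x_{\pi(x)})}:F(x)\xrightarrow{\sim}F(x_{\pi(x)})=\overline F(\pi(x))$ assemble — again via naturality of $\psi$, the unit and cocycle conditions, and the lifting hypothesis to cover arbitrary ultraarrows of $X_0$ — into a natural isomorphism $F\cong\overline F\circ\pi$ compatible with $\psi$ and $\theta_\pi$, i.e. an isomorphism $(F,\psi)\cong(\pi,\theta_\pi)^*\overline F$ in $\Desc(X_\bullet;\eY)$. For full faithfulness, a morphism of descent-cocones $\alpha:(\pi,\theta_\pi)^*\overline F_1\to(\pi,\theta_\pi)^*\overline F_2$ has components whose compatibility with $\theta_\pi$ (instantiated at $(\id,x,x_{\pi(x)})$) says exactly that they descend through $\pi$; then $\beta_z:=\alpha_{x_z}$ defines a natural transformation $\overline F_1\Rightarrow\overline F_2$ — naturality coming from naturality of $\alpha$ plus the lifting hypothesis — with $\beta\cdot\pi=\alpha$, and it is the unique such transformation because $\pi$ is surjective on objects and natural transformations are determined by their components. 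Hence $(\pi,\theta_\pi)^*$ is an equivalence for every $\eY$, so $\pi$ is an effective descent functor.

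I expect the main obstacle to be the bookkeeping in the well-definedness and functoriality of $\overline F$ on ultraarrows: one must juggle the unit and cocycle conditions of the descent-cocone simultaneously with the unit and associativity laws of composition in $\eY$ and the ultrafilter identities of \Cref{cons:ass-unit-sum}, while keeping track of which objects are chosen preimages and which arise from lifts. Conceptually, however, this is the classical descent argument, with the stated lifting property playing the role that "$\pi$ is a well-behaved (effective) epimorphism with enough local sections" plays in the $1$-categorical setting.
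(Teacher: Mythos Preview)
Your proposal is correct and follows essentially the same approach as the paper: fix preimages of objects and lifts of ultraarrows, define the candidate $\overline F$ by transporting along the descent datum $\psi$ (the paper's $\eta$), and verify functoriality and the cocone isomorphism via naturality of $\psi$ together with the unit and cocycle conditions --- exactly the paper's key square \eqref{ast}. The only cosmetic difference is that you argue independence of the chosen lift for each ultraarrow, whereas the paper fixes all lifts once at the outset; either way the substantive checks are identical.
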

\begin{proof}
    We denote $(X_1,\theta)$ the 2-pullback of $\pi$ along itself so that $(\pi,\theta)$ forms a descent-cocone above $(X_{\bullet})$ and we want to show this descent-cocone is universal. Let us fix $\eY$ another v-ultracategory; we want to show that the functor $(\pi,\theta)^* : \vUlt(\eZ,\eY)\to\Desc(X_{\bullet};\eY)$ is an equivalence of categories.

    The faithfulness comes from the surjectivity on objects of $\pi$: let $F,G:\eZ\to\eY$ and $\alpha,\beta:F\Rightarrow G$ such that $\alpha$ and $\beta$ have the same image by $(\pi,\theta)^*$, so $\alpha\pi = \beta\pi$, and we conclude that $\alpha = \beta$ using that $\pi$ is surjective on objects.

    The proof of the fullness and of the essential surjectivity are a bit more involved. Using the hypotheses, we can fix the following liftings:
    for each object $a\in\eZ$ there is $\li(a)\in X_0$ such that $\pi(\li(a)) = a$ and for each ultraarrow $f:a\ultrato(b_s)_{s:\mu}$ in $\eZ$ and $x$ such that $\pi(x) = a$, there is $\li(f;x) : x\ultrato (\lii(f;x)_s)_{s:\mu}$ such that $\pi(\li(f;x)) = f$.

    We first show the fullness. Let $F,G : \eZ\to\eY$ and $\alpha : F\circ\pi \to G\circ\pi$ be a morphism of descent-cocones from $(\pi,\theta)^*(F)$ to $(\pi,\theta)^*(G)$, \ie 
    \[(\alpha \cdot t) \circ (F \cdot \theta) = (G \cdot \theta)\circ(\alpha \cdot s).\]
    A candidate for an antecedent of $\alpha$ is given by $\gamma : F\Rightarrow G$ defined by $\gamma_a \coloneqq \alpha_{\li(a)}$; we have to check the naturality of $\gamma$.
    Let $f : \alpha\ultrato(b_s)_{s:\mu}$ be an ultraarrow in $\eZ$, the following diagram shows the naturality of $\gamma$ at $f$.
    \[\begin{tikzcd}
	{F(\pi(\li(a)))} && {G(\pi(\li(a)))} \\
	{F(\pi(\lii(f;\li(a))_s))} && {F(\pi(\lii(f;\li(a))_s))} \\
	{F(\pi(\li(b_s)))} && {G(\pi(\li(b_s)))}
	\arrow["{\alpha_{\li(a)}}", from=1-1, to=1-3]
	\arrow["{F(\pi(\li(f;x)))}", -to reversed, from=1-1, to=2-1]
	\arrow["{F(f)}"', -to reversed, curve={height=70pt}, from=1-1, to=3-1]
	\arrow["{G(\pi(\li(f;x)))}"', -to reversed, from=1-3, to=2-3]
	\arrow["{G(f)}", -to reversed, curve={height=-70pt}, from=1-3, to=3-3]
	\arrow["{(\alpha_{\lii(f;\li(a))_s})}"', from=2-1, to=2-3]
	\arrow[equals, from=2-1, to=3-1]
	\arrow[equals, from=2-3, to=3-3]
	\arrow["{(\alpha_{\li(b_s)})}"', from=3-1, to=3-3]
\end{tikzcd}\]
    The upper square is a naturality square of $\alpha$, and the lower square is the equation $(\alpha \cdot t) \circ (F \cdot \theta) = (G \cdot \theta)\circ(\alpha \cdot s)$ applied to $(\id_{b_s},\lii(f;\li(a))_s,\li(b_s))\in X_1$.

    We show now the essential surjectivity. Let $(H,\eta)$ be a descent-cocone over $(X_{\bullet})$ of apex $\eY$, we want to construct an antecedent $F : \eZ\to\eY$.
    We define $F : \eZ\to\eY$ as follows,
    \begin{align*}
        & F(a)\coloneqq H(\li(a)) \text{ , for $a$ object of $\eZ$,}
        \\ & F(f) : H(\li(a)) \stackrel{H(\li(f;\li(a)))}\longultrato H(\lii(f,\li(a))_s)_{s:\mu} \stackrel{(\eta)_{\mu}}\longrightarrow H(\li(b_s))_{s:\mu} \text{ , for $f:a\ultrato(b_s)$ in $\eZ$}.
    \end{align*}

    Only remains to show that $F$ defines a functor (it is not obvious as the lifts are chosen arbitrarily) and that it is an antecedent of $(H,\eta)$. There are a lot of coherences to check, but everything flows fluently.

    \begin{itemize}
        \item First, notice that for any $f : a\ultrato(b_s)$ with two lifts $g : x\ultrato(y_s)$, $g' : x'\ultrato(y'_s)$, the following square commutes.
        \begin{equation}\tag{$\ast$}\label{ast}
            \begin{tikzcd}
            	{H(x)} & {H(y_s)} \\
            	{H(x')} & {H(y'_s)}
            	\arrow["{H(g)}", -to reversed, from=1-1, to=1-2]
            	\arrow["\eta"', from=1-1, to=2-1]
            	\arrow["(\eta)", from=1-2, to=2-2]
            	\arrow["{H(g')}", -to reversed, from=2-1, to=2-2]
            \end{tikzcd}
        \end{equation}
        It is the naturality square of $\eta$ for the ultraarrow $(\id_a,x,x')\ultrato(\id_{b_s},y_s,y'_s)_{s:\mu}$ of $X_1$ given by $(g,g')$.

        \item \ul{$F(\id_a) = \id_{F(a)}$:} for $a$ an object of $\eZ$, the following square
        \[\begin{tikzcd}
        	{H(\li(a))} && {H(\lii(\id_a;\li(a))} \\
        	{H(\li(a))} && {H(\li(a))}
        	\arrow["{H(\li(\id_a;\li(a)))}", from=1-1, to=1-3]
        	\arrow["\eta "', equals, from=1-1, to=2-1]
        	\arrow["\eta", from=1-3, to=2-3]
        	\arrow["{\id_{H(\li(a))}}", from=2-1, to=2-3]
        \end{tikzcd}\]
        (where the left equality is $\eta$ by the unit condition) commutes by \eqref{ast}.

        \item \ul{$F((g_s)\circ f) = (F(g_s))\circ F(f)$:} for $f : a\ultrato(b_s)$ and $(g_s : b_s \ultrato (c_{s,t}))$ ultraarrows in $\eZ$, we need to show the commutation of the following diagram.
        \[\begin{tikzcd}
        	{H(\li(a))} &&&&& {H(\lii(gf;\li(a))_{s,t})} \\
        	{H(\li(a))} && {H(\lii(f;\li(a))_s)} &&& {H(\lii(g_s;\lii(f;\li(a)))_{t})} \\
        	&& {H(\li(b_s))} &&& {H(\lii(g_s;\li(b_s))_t)} \\
        	&&&&& {H(\li(c_{s,t}))}
        	\arrow["{H(\li(gf;\li(a)))}", -to reversed, from=1-1, to=1-6]
        	\arrow["\eta"', equals, dashed, from=1-1, to=2-1]
        	\arrow["\eta", dashed, from=1-6, to=2-6]
        	\arrow["\eta", shift left=7, curve={height=-50pt}, from=1-6, to=4-6]
        	\arrow["{H(\li(f;\li(a)))}", -to reversed, from=2-1, to=2-3]
        	\arrow["{H(\li(g_s;\lii(f;\li(a))))}", -to reversed, dashed, from=2-3, to=2-6]
        	\arrow["\eta"', from=2-3, to=3-3]
        	\arrow["\eta", dashed, from=2-6, to=3-6]
        	\arrow["{H(\li(g_s;\li(b_s)))}", -to reversed, from=3-3, to=3-6]
        	\arrow["\eta", from=3-6, to=4-6]
        \end{tikzcd}\]
        The right hand side commutes by the cocycle condition, the left equality is $\eta$ by the unit condition and the middle and upper squares commute by \eqref{ast}.
    \end{itemize}
    
    This shows the functoriality of $F: \eZ\to\eY$.
    To show that $F$ is an antecedent of $(H,\eta)$ we construct a natural isomorphism $\Phi : H \stackrel{\sim}\Rightarrow F\circ\pi$ by,
    \[\Phi_x : H(x) \stackrel{\eta}\longrightarrow H(\li(\pi(x))) = F(\pi(x)) \text{ , for $x$ an object of $X_0$}.\]
    
    \begin{itemize}
        \item \ul{$\Phi$ is natural:} for any $f : x\ultrato(y_s)$ in $X_0$, we need to show the following commutation.
        \[\begin{tikzcd}
        	{H(x)} &&& {H(y_s)} \\
        	{H(\li(\pi(x)))} && {H(\lii(f,\li(\pi(x)))_s)} & {H(\li(\pi(y_s)))} \\
        	{F(\pi(x))} &&& {F(\pi(y_s))}
        	\arrow["{H(f)}", -to reversed, from=1-1, to=1-4]
        	\arrow["\eta", from=1-1, to=2-1]
        	\arrow["\Phi_x"', curve={height=50pt}, from=1-1, to=3-1]
        	\arrow["\eta", from=1-4, to=2-4]
        	\arrow["(\Phi_{y_s})", curve={height=-50pt}, from=1-4, to=3-4]
        	\arrow["{H(\li(f;\li(\pi(x))))}", -to reversed, from=2-1, to=2-3]
        	\arrow["\eta", from=2-3, to=2-4]
        	\arrow[equals, from=3-1, to=2-1]
        	\arrow["{F(f)}", -to reversed, from=3-1, to=3-4]
        	\arrow[equals, from=3-4, to=2-4]
        \end{tikzcd}\]
        The lower square is the definition of $F$, and the upper square commutes by \eqref{ast}.

        \item \ul{$\Phi$ induces an isomorphism between $(F\pi,F\theta)$ and $(H,\eta)$:}
        for any $(v,x,y)\in X_1$ (\ie $x,y\in X_0$ and $v:\pi(x)\to\pi(y)$ isomorphism), we need to show the following commutation.
        \[\begin{tikzcd}
        	{H(x)} &&& {H(y)} \\
        	{H(\li(\pi(x)))} && {H(\lii(v,\li(\pi(x))))} & {H(\li(\pi(y)))} \\
        	{F(\pi(x))} &&& {F(\pi(y))}
        	\arrow["{\eta_{(v,x,y)}}", from=1-1, to=1-4]
        	\arrow["\eta", from=1-1, to=2-1]
        	\arrow["\Phi_x"', curve={height=50pt}, from=1-1, to=3-1]
        	\arrow["\eta", from=1-4, to=2-4]
        	\arrow["\Phi_y", curve={height=-50pt}, from=1-4, to=3-4]
        	\arrow["{H(\li(v;\li(\pi(x))))}", from=2-1, to=2-3]
        	\arrow["\eta", from=2-3, to=2-4]
        	\arrow[equals, from=3-1, to=2-1]
        	\arrow["{F(v)}", from=3-1, to=3-4]
        	\arrow[equals, from=3-4, to=2-4]
        \end{tikzcd}\]
        The lower square is again the definition of $F$, and using the cocycle and the unit condition of $\eta$, the commutation of the upper square amounts to the one of 
        \[\begin{tikzcd}
        	{H(x')} && {H(y')} \\
        	{H(y')} && {H(y')}
        	\arrow["{\eta_{(v,x',y')}}", from=1-1, to=1-3]
        	\arrow["{H(f)}"', from=1-1, to=2-1]
        	\arrow[equals, from=1-3, to=2-3]
        	\arrow["{\eta_{(\id_{y'},y',y')}}", equals, from=2-1, to=2-3]
        \end{tikzcd},\]
        where we denote $x'\coloneqq \li(\pi(x))$, $y'\coloneqq \lii(v,\li(\pi(x)))$ and $f \coloneqq \li(v;\li(\pi(x)))$. And this square is the naturality square of $\eta$ for the arrow $(v,x',y')\to(\id_{y'},y',y')$ given by $(f,\id_{y'})$. 
    \end{itemize}
\end{proof}

\begin{rmk}
    The lifting hypothesis of \Cref{prop:desc-vUlt} is a categorified version of \Cref{Thm:étale-ucvg}.\ref{cond2}. 
    It is shown in \cite{RT} that open surjections are effective descent in $\TopSp$; however, the hypotheses of  \Cref{prop:desc-vUlt} are a bit stronger than being an open surjection in the 0-dimensional case as we ask for the lift of ultrafilters to be of the same type. In \cite[Theorem 1.5.]{RT} the authors give a full characterization of effective descent maps between topological spaces; one can hope for a full characterization of effective descent maps in $\vUlt$ in the same flavor.
\end{rmk}  

\section{The groupoid of amply indexed models}\label{sec:ample}

In this section, we construct the topological groupoid $(T^{\amp}_{\bullet})$ of \emph{amply indexed models}. This topological groupoid will represent $\E$ and the functor
\[\vpt(\pi):\vpt(T^{\amp}_0)\longrightarrow\vpt(\E;T^{\amp}_0)\]
(as in \Cref{cons:strategyproof}) will satisfy the hypotheses of \Cref{prop:desc-vUlt}. This will imply that $\vpt(\pi)$ is effective descent, and (checking that $(\vpt(T^{\amp}_{\bullet}))$ is indeed the groupoid kernel of $\vpt(\pi)$) we will get that $(T^{\amp}_{\bullet})$ satisfies \Cref{question} positively.

Following \cite{BM1} we give an explicit description of $(T^{\amp}_{\bullet})$ using \emph{indexed models}; however, for the groupoid to satisfy the hypotheses of \Cref{prop:desc-vUlt} we need an additional condition: we ask for the indexings to be \emph{ample}. We then use a theorem of Wrigley \cite[Proposition 8.24]{Josh} to conclude that this ample condition is not too strong, \ie that the groupoid of amply indexed models represents our topos $\E$.

We first recall how to construct a topological groupoid representing a topos $\E$ and how indexed models appear. Our presentation follows closely Butz--Moerdijk \cite{BM1} \cite{BM2}.

\begin{cons}\label{cons:XMk}
We want to construct a topological groupoid $(T_{\bullet})$ representing $\E$. The points of $T_0$ will induce points of $\E$, so (as $T_0$ is small) we have to choose a small set of points of $\E$, and we want this set of points to be separating to have some hope to be able to reconstruct $\E$. 

\begin{nnota}
    We fix $X$ a small separating set of points of $\E$; we think of the points of $X$ as a conservative set of models. 
\end{nnota}

We look now at two points $a$ and $b$ of $T_0$ that are sent onto two points $p$ and $q$ of $X$. If $a\cleq b$ in $T_0$ the ultraconvergence induces a map of points $p\to q$, but in general, there are plenty of such maps, we thus need a way to add some information on $a$ and $b$ to be able to trace out the map $p\to q$ on which $a\cleq b$ will be sent.
The idea is to add an indexing: a point $a$ will be a pair of $(p,\alpha)$ where $\alpha$ is an indexing of the model pointed by $p$, so that if $a = (p,\alpha)\cleq b=(q,\beta)$, the two indexings $\alpha$ and $\beta$ will suffice to determine on which map $p\to q$ the ultraconvergence $a\cleq b$ is sent.

\begin{nnota}
    We fix a $M\in\E$ such that the subobjects of the $M^n$ generate $\E$ all together (for example, the coproduct of the objects of a small site presentation of $\E$); we think of $M_p$ as the underlying set of the model pointed by $p$.
\end{nnota}
\begin{nnota}
    We fix $\kappa$ a large enough infinite cardinal such that $\kappa\geq|M_p|$ for all $p$ in $X$; $\kappa$ will play the role of the indexing set.
\end{nnota}

The data of such $\E$, $X$, $M$ and $\kappa$ will be fixed until the end of this section.
\end{cons}

\begin{defn}\leavevmode
    \begin{enumerate}
        \item An \emph{indexing} of a point $p\in X$ is a partial surjection $\alpha : \kappa \paronto M_p$. A point of $p$ of $X$ together with such an indexing is called an \emph{indexed model}.
        
        \item An \emph{ultramorphism of indexed models} from $(p,\alpha)$ to $(q_s,\beta_s)_{s:\mu}$ is an ultraarrow $f:p\ultrato(q_s)$ in $\vpt(\E;X)$ such that 
        \[f_M(\alpha(i)) = (\beta_s(i))_{s:\mu} \text{ for all } i\in\dom(\alpha),\]
        this condition can be expressed as the existence of a (necessarily unique) lift,
\[\begin{tikzcd}
	\kappa & \kappa^\mu \\
	{\dom(\alpha)} & {\int_{s:\mu}\dom(\beta_s)} \\
	{M_p} & {\int_{s:\mu}M_{q_s}}
	\arrow[from=1-1, to=1-2]
	\arrow["\subseteq"{marking, allow upside down}, draw=none, from=2-1, to=1-1]
	\arrow[dashed, from=2-1, to=2-2]
	\arrow["\alpha"', two heads, from=2-1, to=3-1]
	\arrow["\subseteq"{marking, allow upside down}, draw=none, from=2-2, to=1-2]
	\arrow["(\beta_s)", two heads, from=2-2, to=3-2]
	\arrow["{f_M}"', from=3-1, to=3-2]
\end{tikzcd}\]
    and we say that this ultramorphism of indexed models is \emph{witnessed} by $f$.
        
        \item An \emph{indexing} $\alpha : \kappa \paronto M_p$ is said to be \emph{ample} if $|\kappa\setminus\dom(\alpha)|=\kappa$. An \emph{amply indexed model} is an indexed model $(p,\alpha)$ such that $\alpha$ is ample.
    \end{enumerate}
\end{defn}

The reason we introduced the ample condition is the following technical result.
\begin{lem}\label{lem:ample}\leavevmode
    \begin{enumerate}
        \item All points in $X$ have an ample indexing.
        \item For $(p,\alpha)$ an amply indexed model, and $f:p\ultrato(q_s)_{s:\mu}$ an ultraarrow in $\vpt(\E;X)$, we can find ample indexings $(\beta_s)_{s:\mu}$ such that $f$ witnesses an ultramorphism of indexed models from $(p,\alpha)$ to $(q_s,\beta_s)_{s:\mu}$.
    \end{enumerate}
\end{lem}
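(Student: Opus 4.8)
The plan is to prove the two statements separately, the first being routine and the second containing the real content.

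\medskip
For part (1), fix $p\in X$. Since $\kappa$ is infinite we have $\kappa+\kappa=\kappa$, so we may partition $\kappa=K_0\sqcup K_1$ with $|K_0|=|K_1|=\kappa$. As $|K_1|=\kappa\ge|M_p|$, choose any surjection $K_1\twoheadrightarrow M_p$ (the empty partial function if $M_p=\emptyset$), and let $\alpha$ be undefined on $K_0$. This is an indexing with $\kappa\setminus\dom(\alpha)\supseteq K_0$, hence $|\kappa\setminus\dom(\alpha)|=\kappa$, so $\alpha$ is ample.

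\medskip
For part (2), I would first dispose of the degenerate case $M_p=\emptyset$: then $\dom(\alpha)=\emptyset$, the witnessing condition is vacuous, and any ultrafamily $(\beta_s)_{s:\mu}$ obtained by applying part (1) to $q_s$ for $s$ in a $\mu$-large set does the job. So assume $M_p\ne\emptyset$; since $f_M$ maps $M_p$ into $\int_{s:\mu}M_{q_s}$, this set is nonempty, so $S'\coloneqq\{s : q_s\in X \text{ and } M_{q_s}\ne\emptyset\}$ is $\mu$-large, and I work only with $s\in S'$; note $|M_{q_s}|\le\kappa$ there since $q_s\in X$. Using choice, for each $i\in\dom(\alpha)$ pick a $\mu$-large $S_i\subseteq S'$ and a representative $\rho(i)=(\rho(i)_s)_{s\in S_i}\in\prod_{s\in S_i}M_{q_s}$ of $f_M(\alpha(i))\in\int_{s:\mu}M_{q_s}$. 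For $s\in S'$ define a partial function $\beta_s^0$ on $\{i\in\dom(\alpha):s\in S_i\}$ by $\beta_s^0(i)\coloneqq\rho(i)_s$. Since $\dom(\beta_s^0)\subseteq\dom(\alpha)$ and $\alpha$ is ample, the ``fresh'' indices $\kappa\setminus\dom(\alpha)$ still have cardinality $\kappa\ge|M_{q_s}|$; split $\kappa\setminus\dom(\alpha)=A_s\sqcup B_s$ with $|A_s|=|B_s|=\kappa$, put a surjection $A_s\twoheadrightarrow M_{q_s}$ on $A_s$, and leave $\beta_s$ undefined on $B_s$. Setting $\beta_s\coloneqq\beta_s^0\cup(\text{this surjection on }A_s)$ gives a partial surjection $\kappa\paronto M_{q_s}$ (surjective already on $A_s$) with $\kappa\setminus\dom(\beta_s)\supseteq B_s$, hence ample. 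Finally, for fixed $i\in\dom(\alpha)$ and every $s\in S_i$ we have $i\in\dom(\beta_s)$ (since $i\in\dom(\beta_s^0)$ and $i\notin A_s$) and $\beta_s(i)=\rho(i)_s$; as $S_i$ is $\mu$-large this gives $(\beta_s(i))_{s:\mu}=(\rho(i)_s)_{s:\mu}=f_M(\alpha(i))$. Thus $f$ witnesses an ultramorphism of indexed models $(p,\alpha)\ultrato(q_s,\beta_s)_{s:\mu}$.

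\medskip
The main obstacle — and precisely the point at which the ample hypothesis is used — is that there are $|\dom(\alpha)|$-many witnessing constraints, one for each $i\in\dom(\alpha)$, and each is only required to hold on a $\mu$-large set $S_i$; since an ultrafilter need not be $|\dom(\alpha)|$-complete, one cannot intersect the $S_i$ to obtain a single $\mu$-large set on which all the $\beta_s$ are simultaneously defined. The resolution is to let $\dom(\beta_s)$ genuinely depend on $s$, so that the constraint for index $i$ is only ever checked pointwise on $S_i$; ampleness of $\alpha$ is what guarantees that after removing the (possibly large) set $\dom(\alpha)$ there remain $\kappa$-many free indices with which to restore surjectivity and ampleness of each individual $\beta_s$. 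Everything else is bookkeeping, and I would check at the end that $(q_s,\beta_s)_{s:\mu}$ really is an ultrafamily of amply indexed models via the unique-lift diagram in the definition of ultramorphism.
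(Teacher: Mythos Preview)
Your proof is correct and follows essentially the same approach as the paper: split $\kappa$ in two for part (1), and for part (2) use the $\kappa$-many fresh indices in $\kappa\setminus\dom(\alpha)$ to extend the partial maps to ample surjections.

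One simplification the paper makes, which your final paragraph obscures: once you have restricted to the $\mu$-large set $S'$ where every $M_{q_s}$ is nonempty, the map $\prod_{s\in S'}M_{q_s}\to\int_{s:\mu}M_{q_s}$ is surjective, so every $f_M(\alpha(i))$ admits a representative defined on \emph{all} of $S'$. Hence you may take $S_i=S'$ for every $i$, and each $\beta_s^0$ is then defined on the whole of $\dom(\alpha)$. The ``obstacle'' you describe (that the $S_i$ cannot be intersected) is therefore not a genuine one; the real content, which you correctly identify and use, is that ampleness of $\alpha$ supplies enough unused indices to restore surjectivity and ampleness of each $\beta_s$ after the witnessing constraints have been imposed. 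The paper's version is just your argument with this simplification carried out.
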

\begin{proof}\leavevmode
    \begin{enumerate}
        \item Let $p$ be a point in $X$. By assumption on $\kappa$, $\kappa \geq |M_p|$, and as $\kappa$ is infinite $|\kappa\sqcup\kappa| = \kappa$, hence 
        \[\kappa \simeq\kappa\sqcup\kappa\supset\kappa\twoheadrightarrow M_p\] 
        gives an ample indexing of $p$.
        \item We choose a support set $S$ of $\mu$ such that $\prod_{s:S}M_{q_s}\to\int_{s:\mu}M_{q_s}$ is surjective. So that we can lift $f_M\circ\alpha : \dom(\alpha)\to\int_{s:\mu}M_{q_s}$ to a map into the product $\prod_{s:S}M_{q_s}$, getting a family $(\alpha'_s: \dom(\alpha) \to M_{q_s})_{s:S}$. As $\alpha$ is ample, $|\kappa\setminus\dom(\alpha)| = \kappa$ and we extend each $\alpha'_s : \dom(\alpha)\to M_{q_s}$ to an ample indexing $\beta_s : \kappa\paronto M_{q_s}$.
\[\begin{tikzcd}
	\kappa && {\kappa^S} & {\kappa^{\mu}} \\
	{\dom(\alpha)} & {\prod \dom(\alpha)} & {\prod\dom(\beta_s)} & {\int\dom(\beta_s)} \\
	\\
	{M_p} && {\prod M_{q_s}} & {\int M_{q_s}}
	\arrow[hook, from=1-1, to=1-3]
	\arrow[two heads, from=1-3, to=1-4]
	\arrow["\subset"{marking, allow upside down}, draw=none, from=2-1, to=1-1]
	\arrow[hook, from=2-1, to=2-2]
	\arrow["\alpha"', from=2-1, to=4-1]
	\arrow["{(\alpha'_s)}"{description}, curve={height=12pt}, dashed, from=2-1, to=4-3]
	\arrow["\subseteq"{marking, allow upside down}, draw=none, from=2-2, to=2-3]
	\arrow["{\prod\alpha'_s}"{description}, dashed, from=2-2, to=4-3]
	\arrow["\subset"{marking, allow upside down}, draw=none, from=2-3, to=1-3]
	\arrow[two heads, from=2-3, to=2-4]
	\arrow["{\prod\beta_s}"{description}, dashed, two heads, from=2-3, to=4-3]
	\arrow["\subset"{marking, allow upside down}, draw=none, from=2-4, to=1-4]
	\arrow["{\int\beta_s}"{description}, dashed, two heads, from=2-4, to=4-4]
	\arrow["{f_m}"', curve={height=18pt}, from=4-1, to=4-4]
	\arrow[two heads, from=4-3, to=4-4]
\end{tikzcd}\]
    \end{enumerate}
\end{proof}

Amply indexed models form a topological space, it constitutes the space of objects $T_0^{\amp}$ of our topological groupoid representing $\E$.

\begin{cons}
    The points of $T_0^{\amp}$ are amply indexed models $(p,\alpha)$. The topology is the logical topology, \ie the topology is given by the following basic opens,
    \begin{align*}
        U_{\underline{i},A} \coloneqq \{(p,\alpha) : \alpha(\underline{i})\in A_p\} \text{, with } \underline{i} \in \kappa^n \text{ and } A\subseteq M^n.
    \end{align*}
    From a logical point of view, $A$ is seen as a formula with $n$-parameters so that $U_{\underline{i},A}$ is the set of amply indexed models satisfying this formula, the parameters being specified by the indexing.
    
    Moreover, there is a geometric morphism $\pi : \Sh(T_0^{\amp})\to\E$, for $E\in\E$ the sheaf $\pi^*(E)\to T_0^{\amp}$ is defined by $\pi^*(E) \coloneqq \bigsqcup_{(p,\alpha):T_0^{\amp}}E_p$ with the topology given by the following basis opens,
        \begin{align*}
            W_{\underline{i},A,h} \coloneqq \bigsqcup_{(p,\alpha):U_{\underline{i},A}}\{h(\alpha(\underline{i})))\} \text{, with } \underline{i} \in \kappa^n , A\subseteq M^n \text{ and } h : A\to E.
        \end{align*}
    One can check easily that this are basis for topologies and that $\pi$ is a geometric morphism, we refer the reader to \cite{BM1} for more details. 
\end{cons}

\begin{lem}\label{lem:T0ample}
    The virtual ultracategory $\vpt(T_0^{\amp})$ has for objects the amply indexed models and for ultraarrows the ultramorphisms of indexed models.
    Moreover, the functor $\vpt(\pi) : \vpt(T_0^{\amp})\to\vpt(\E;X)$ maps $(p,\alpha)$ on $p$ and an ultramorphism on the $f : p\ultrato(q_s)$ witnessing it.
\end{lem}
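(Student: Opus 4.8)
The objects of $\vpt(T_0^{\amp})$ are by definition the points of the space $T_0^{\amp}$, i.e. the amply indexed models, and since $\pi^*(E)=\bigsqcup_{(p,\alpha)}E_p$ has stalk $E_p$ over $(p,\alpha)$, the geometric morphism $\pi$ sends the point $(p,\alpha)$ to $p$; hence $\vpt(\pi)$ sends the object $(p,\alpha)$ to $p$, as claimed. The whole content is in the ultraarrows. Recalling that $\vpt(T_0^{\amp})=\vpt(\Sh(T_0^{\amp}),T_0^{\amp})$, the hom-object $\vpt(T_0^{\amp})\big((p,\alpha),(q_s,\beta_s)_{s:\mu}\big)$ is the subsingleton $\llbracket (p,\alpha)\cleq(q_s,\beta_s)_{s:\mu}\rrbracket$, and the first step is to unwind this along the logical topology: $(p,\alpha)\cleq(q_s,\beta_s)_{s:\mu}$ holds if and only if every basic open $U_{\underline{i},A}$ containing $(p,\alpha)$ is $(q_s,\beta_s)_*(\mu)$-large, i.e.
\[(\star)\qquad \alpha(\underline{i})\in A_p\ \Longrightarrow\ \beta_s(\underline{i})\in A_{q_s}\ \text{ for }\mu\text{-almost all }s,\]
quantified over all $n$, all $\underline{i}\in\dom(\alpha)^{n}$ and all subobjects $A\hookrightarrow M^{n}$.

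The core step is then to show $(\star)$ is equivalent to the existence of an ultramorphism of indexed models from $(p,\alpha)$ to $(q_s,\beta_s)_{s:\mu}$ — and that its witness $f$ is unique. For the easy direction, given $f\colon p^*\Rightarrow\int_{s:\mu}q_s^*$ with $f_M(\alpha(i))=(\beta_s(i))_{s:\mu}$, naturality of $f$ along the projections $M^n\to M$ (identifying $\int_{s:\mu}(M^n)_{q_s}\cong(\int_{s:\mu}M_{q_s})^n$ by \Cref{lem:up-prod}) gives $f_{M^n}(\alpha(\underline{i}))=(\beta_s(\underline{i}))_{s:\mu}$, and naturality along a mono $A\hookrightarrow M^n$ — which $\int_{s:\mu}q_s^*$ preserves, being lex — forces this element into $\int_{s:\mu}A_{q_s}$ whenever $\alpha(\underline{i})\in A_p$; that is exactly $(\star)$. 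For the converse I would build $f$ directly: since the subobjects of the $M^n$ generate $\E$ and $p^*$ is cocontinuous, every $x\in E_p$ can be written $x=g_p(\alpha(\underline{i}))$ with $g\colon A\to E$, $A\hookrightarrow M^n$, $\underline{i}\in\dom(\alpha)^n$; put $f_E(x)\coloneqq(g_{q_s}(\beta_s(\underline{i})))_{s:\mu}$. Independence of the choice of $(A,g,\underline{i})$ follows from applying $(\star)$ to the subobject $A\times_E A'\hookrightarrow M^{n+n'}$ (encoding ``the two chosen witnesses have the same image in $E$''); naturality of $f$ is then built into the definition; and $f_M(\alpha(i))=(\beta_s(i))_{s:\mu}$ by taking $A=M$, $g=\id_M$. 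Uniqueness: the same lifting shows any natural transformation $p^*\Rightarrow\int_{s:\mu}q_s^*$ is determined by its $M$-component, which since $\alpha$ is surjective is in turn determined by the values $f_M(\alpha(i))$.

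It remains to identify $\vpt(\pi)$ on the (unique, when it exists) ultraarrow $\eta\colon(p,\alpha)\ultrato(q_s,\beta_s)_{s:\mu}$. By definition of $\vpt$ of a space, $\eta$ is the natural-in-$F$ map $F_{(p,\alpha)}\to\int_{s:\mu}F_{(q_s,\beta_s)}$ given by extending a germ at $(p,\alpha)$ to a section over an open $U$ and taking germs at the $(q_s,\beta_s)$, which lie in $U$ for $\mu$-almost all $s$ (cf. \Cref{prop:0dimcase}); and $\vpt(\pi)(\eta)$ is obtained by whiskering with $\pi^*$, i.e. is $E\mapsto\eta^{\pi^*E}$ under the stalk identifications $E_p=(\pi^*E)_{(p,\alpha)}$, $E_{q_s}=(\pi^*E)_{(q_s,\beta_s)}$. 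Evaluating at $E=M$: for $i\in\dom(\alpha)$ the basic open $W_{(i),M,\id_M}$ of $\pi^*M$ is exactly (the image of) the section $(p',\alpha')\mapsto\alpha'(i)$ over $U_{(i),M}$, whose germ at $(p,\alpha)$ is $\alpha(i)$ and whose germ at $(q_s,\beta_s)$ is $\beta_s(i)$; hence $\vpt(\pi)(\eta)_M(\alpha(i))=(\beta_s(i))_{s:\mu}$ for all $i\in\dom(\alpha)$. Thus $\vpt(\pi)(\eta)$ witnesses an ultramorphism of indexed models $(p,\alpha)\to(q_s,\beta_s)_{s:\mu}$, and by the uniqueness just established it is the witness $f$. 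Putting the three steps together proves both assertions.

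The main obstacle is the converse direction of the core step, $(\star)\Rightarrow f$: one must carefully exploit the generation of $\E$ by subobjects of powers of $M$ together with cocontinuity of $p^*$ to represent arbitrary stalk-elements, and then translate every coherence that has to be checked — well-definedness of $f_E$ and, implicitly, all of its naturality — into an instance of $(\star)$ for a suitable subobject of some $M^n$ (a pullback, a graph, a diagonal), all while keeping track of what is well-defined only modulo $\mu$. A lesser but still delicate point is the bookkeeping in the last step, matching the transport-of-germs description of $\eta$ with the explicit basic opens $W_{\underline{i},A,h}$ of $\pi^*$ coming from the construction of the groupoid.
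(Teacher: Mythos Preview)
Your proof is correct, but the converse direction of your ``core step'' --- the hand-construction of $f$ from $(\star)$ --- is redundant. Your own third step already produces the witness: once the ultraconvergence $(p,\alpha)\cleq(q_s,\beta_s)_{s:\mu}$ holds, the ultraarrow $\eta$ exists (it is a subsingleton, since $T_0^{\amp}$ is a space), and you then compute $\vpt(\pi)(\eta)_M(\alpha(i))=(\beta_s(i))_{s:\mu}$ via the basic open $W_{(i),M,\id_M}$. That equation is precisely the statement that $\vpt(\pi)(\eta)$ witnesses an ultramorphism from $(p,\alpha)$ to $(q_s,\beta_s)_{s:\mu}$, so it simultaneously gives the converse implication and identifies $\vpt(\pi)$ on ultraarrows.

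This is exactly the paper's route: rather than building $f$ by hand from $(\star)$, the paper simply sets $f\coloneqq\vpt(\pi)(\eta)$ and checks it is a witness using the local-homeomorphism open $W_{i,M,\id_M}$ --- the same computation you carry out in your third step. So what you flag as ``the main obstacle'' is in fact bypassed by the functoriality of $\vpt(\pi)$, and your careful representation of arbitrary stalk-elements together with the well-definedness check via $A\times_E A'\hookrightarrow M^{n+n'}$ --- while correct and a nice illustration of how the generating hypothesis on $M$ gets used --- is extra work that the paper avoids entirely. The uniqueness argument and the easy direction (ultramorphism $\Rightarrow$ ultraconvergence) are essentially the same in both proofs.
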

\begin{proof}
    First notice that there can be at most one ultramorphism of ample indexed models from $(p,\alpha)$ to $(q_s,\beta_s)$: let $f$ be a witness of such an ultramorphism, then as $\alpha$ is surjective the value of $f_M$ is determined, and so by the generating property of $M$ the ultraarrow $f$ is uniquely determined.

    Let $f$ be a witness of an ultramorphism of amply indexed models from $(p,\alpha)$ to $(q_s,\beta_s)$, we show that $(p,\alpha)\cleq(q_s,\beta_s)$ in $T_0^{\amp}$. Let $U_{\underline{i},A}$ a basic open containing $(p,\alpha)$, \ie $\alpha(\underline{i})\in A_p$, then by functoriality $(\beta_s(i)) = f_M(\alpha(i)) \in \int_{s:\mu}A_{q_s}$ and so $(q_s,\beta_s)\in U_{\underline{i},A}$ for $\mu$-all $s$.

    Conversely, suppose that $(p,\alpha)\cleq(q_s,\beta_s)$, this induces an ultraarrow in $\vpt(\Sh(T_0^{\amp}))$ and by $\pi$ an ultraarrow $f : p\ultrato(q_s)$. We will show that this $f$ witnesses an ultramorphism from $(p,\alpha)$ to $(q_s,\beta_s)$. We look at the action of the ultraconvergence on the sheaf $\pi^*(M)\to T_0^{\amp}$; for any $\alpha(i)\in M_p$, the open $W_{i,M,\id_M}$ witnesses the local homeomorphism property, and so $f_M(\alpha(i)) = (\beta_s(i))$ as wanted.
\end{proof}

\begin{cor}\label{cor:T0ample-desc}
    The functor $\vpt(\pi) : \vpt(T_0^{\amp})\to\vpt(\E;X)$ is an effective descent functor.
\end{cor}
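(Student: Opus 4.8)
The plan is simply to verify the two hypotheses of \Cref{prop:desc-vUlt} for the functor $\vpt(\pi) : \vpt(T_0^{\amp})\to\vpt(\E;X)$, since all the substantive work has already been packaged into \Cref{lem:ample} and \Cref{lem:T0ample}; there is no genuine obstacle left at this stage.

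First I would check surjectivity on objects. The objects of $\vpt(\E;X)$ are the points $p\in X$, and by the first assertion of \Cref{lem:ample} every such $p$ admits an ample indexing $\alpha : \kappa\paronto M_p$, so $(p,\alpha)$ is an object of $\vpt(T_0^{\amp})$ (by \Cref{lem:T0ample}) mapping onto $p$ under $\vpt(\pi)$. Hence $\vpt(\pi)$ is surjective on objects.

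Next I would check the lifting property. Fix an object $(p,\alpha)$ of $\vpt(T_0^{\amp})$ and an ultraarrow $f : \vpt(\pi)(p,\alpha) = p \ultrato (q_s)_{s:\mu}$ in $\vpt(\E;X)$; we must produce a lift $(p,\alpha)\ultrato(q_s,\beta_s)_{s:\mu}$ of the same type $\mu$. By \Cref{lem:T0ample}, the ultraarrows out of $(p,\alpha)$ in $\vpt(T_0^{\amp})$ are exactly the ultramorphisms of indexed models, and $\vpt(\pi)$ sends such an ultramorphism to its witnessing ultraarrow in $\vpt(\E;X)$. So it suffices to find ample indexings $(\beta_s)_{s:\mu}$ of the $q_s$ so that $f$ witnesses an ultramorphism from $(p,\alpha)$ to $(q_s,\beta_s)_{s:\mu}$ — and this is precisely the second assertion of \Cref{lem:ample}. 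The resulting ultramorphism has type $\mu$ and maps to $f$ under $\vpt(\pi)$, giving the required lift.

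With both hypotheses verified, \Cref{prop:desc-vUlt} applies and $\vpt(\pi)$ is an effective descent functor. The ``ample'' condition imposed in \Cref{cons:XMk} was introduced exactly so that the second assertion of \Cref{lem:ample} would hold, and the descent machinery of \Cref{sec:desc} then does the rest. The only points worth keeping an eye on are that $\vpt(\pi)$ is genuinely well-defined on ultraarrows — i.e.\ that the witnessing ultraarrow of an ultramorphism of amply indexed models is unique, which is handled in \Cref{lem:T0ample} using surjectivity of $\alpha$ together with the generating property of $M$ — and that \Cref{prop:desc-vUlt} is stated at the level of functors of virtual ultracategories, so no further compatibility with the topological groupoid structure needs to be invoked here.
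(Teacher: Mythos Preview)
Your proposal is correct and follows exactly the same approach as the paper: verify the two hypotheses of \Cref{prop:desc-vUlt} using the two parts of \Cref{lem:ample} together with the description of $\vpt(\pi)$ given in \Cref{lem:T0ample}. The paper's proof is a one-line invocation of these same lemmas; you have simply unpacked it.
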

\begin{proof}
    \Cref{lem:ample} together with \Cref{lem:T0ample} shows that $\vpt(\pi)$ satisfies the hypotheses of \Cref{prop:desc-vUlt}.
\end{proof}

The same point of $X$ appears from several different points of $T_0^{\amp}$ as instantiations of the same model with different indexings. We add a groupoidal structure $T^{\amp}_1$ on top of $T_0^{\amp}$ to identify points of $T_0^{\amp}$ representing the same model. Again, the construction of $T_1^{\amp}$ follows \cite{BM1}. 

\begin{cons}\label{cons:T1}
    The points of $T_1^{\amp}$ are triplets $(\theta,(p,\alpha),(q,\beta))$ where $(p,\alpha)$ and $(q,\beta)$ are amply indexed models, and $\theta$ is an isomorphism between $p$ and $q$ in the category of points of $\E$ (note that no conditions on the indexings are required on $\theta$). The topology is the logical topology, \ie the topology given by the following basic opens,
    \begin{align*}
        V_{\underline{i},A,\underline{j},B} \coloneqq \{(\theta,(p,\alpha),(q,\beta)) : \alpha(\underline{i})\in A_p,\beta(\underline{j})\in B_p,\theta(\alpha(\underline{i})) = \beta(\underline{j})\} \\  \text{, with } \underline{i},\underline{j} \in \kappa^n \text{ and } A,B\subseteq M^n.
    \end{align*}
    There is a natural topological groupoidal structure $(T^{\amp}_{\bullet})$.
    Moreover, noticing that for any $E\in\E$ the sheaf $\pi^*(E)\to T_0^{\amp}$ is naturally an equivariant sheaf over $(T^{\amp}_{\bullet})$, we can define a natural isomorphism
    \[\begin{tikzcd}
	    {\Sh(T_1^{\amp})} & {\Sh(T_0^{\amp})} \\
    	{\Sh(T_0^{\amp})} & {\E}
        \arrow[from=1-1, to=1-2]
    	\arrow[from=1-1, to=2-1]
    	\arrow["\pi", from=1-2, to=2-2]
    	\arrow["\sim"', "\theta", shorten <=10pt, shorten >=10pt, Rightarrow, from=2-1, to=1-2]
    	\arrow["\pi"', from=2-1, to=2-2]
    \end{tikzcd}\]
    and we obtain a descent-cocone $(\pi,\theta)$ over $(\Sh(T^{\amp}_{\bullet}))$. 
\end{cons}

\begin{lem}\label{lem:T1ample}
As in \Cref{cons:strategyproof}, $(\pi,\theta)$ induces a descent-cocone $(\vpt(\pi),\vpt(\theta))$ over $\vpt(T^{\amp}_{\bullet})$.
    \[\begin{tikzcd}
	{\vpt(T^{\amp}_1)} & {\vpt(T^{\amp}_0)} \\
	{\vpt(T^{\amp}_0)} & {\vpt(\E;X)}
	\arrow[from=1-1, to=1-2]
	\arrow[from=1-1, to=2-1]
	\arrow["\vpt(\pi)", from=1-2, to=2-2]
	\arrow["\sim"', "\vpt(\theta)", shorten <=10pt, shorten >=10pt, Rightarrow, from=2-1, to=1-2]
	\arrow["\vpt(\pi)"', from=2-1, to=2-2]
\end{tikzcd}\]
    The square exhibits $\vpt(T^{\amp}_1)$ as the 2-kernel of $\vpt(\pi)$.
\end{lem}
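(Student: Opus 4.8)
The plan is to treat the two assertions separately: the descent-cocone statement is essentially formal, while identifying $\vpt(T^{\amp}_1)$ with the $2$-kernel of $\vpt(\pi)$ requires unwinding both v-ultracategories against the explicit descriptions of $T^{\amp}_0$ and $T^{\amp}_1$.

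For the cocone: since $\vpt:\GTop\to\vUlt$ is a strict $2$-functor (\Cref{prop:vUlt-subcat}) and, by \Cref{cons:strict2pb}, sends the strict $2$-pullbacks defining the internal groupoid $(T^{\amp}_{\bullet})$ to the corresponding $2$-pullbacks in $\vUlt$, it carries the codescent diagram $(\Sh(T^{\amp}_{\bullet}))$ to $(\vpt(T^{\amp}_{\bullet}))$ and carries the descent-cocone $(\pi,\theta)$ of \Cref{cons:T1} — whose unit and cocycle conditions from \Cref{def:desc-cocone-cat} are just pasting identities — to a descent-cocone. Restricting the object-class from all points of the topoi $\Sh(T^{\amp}_i)$ to those coming from the space $T^{\amp}_i$, and the apex from $\vpt(\E)$ to $\vpt(\E;X)$ — consistent with every structure map, since these are sheaf functors of continuous maps and $\pi$ sends the canonical points of $\Sh(T^{\amp}_0)$ into $X$ — yields $(\vpt(\pi),\vpt(\theta))$ as in \Cref{cons:strategyproof}.

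For the $2$-kernel: the universal property of the strict $2$-pullback (which exists by \Cref{cons:strict2pb}) provides a canonical comparison functor $\Psi:\vpt(T^{\amp}_1)\to\vpt(T^{\amp}_0)\times_{\vpt(\E;X)}\vpt(T^{\amp}_0)$, and one must show it is an isomorphism of v-ultracategories. By \Cref{cons:strict2pb} together with \Cref{lem:T0ample}, source and target of $\Psi$ have the same objects — triples $(\theta,(p,\alpha),(q,\beta))$ with $(p,\alpha),(q,\beta)$ amply indexed models and $\theta:p\stackrel{\sim}\to q$ an isomorphism of points of $\E$ — and $\Psi$ is the identity on them. Both also have subsingleton hom-sets: $\vpt(T^{\amp}_1)$ because it is $\vpt$ of a space, and the $2$-pullback because an ultraarrow in it from $(\theta,(p,\alpha),(q,\beta))$ to $(\theta_s,(p_s,\alpha_s),(q_s,\beta_s))_{s:\mu}$ is a pair $(f,g)$ of ultramorphisms of indexed models $f:(p,\alpha)\ultrato(p_s,\alpha_s)_{s:\mu}$, $g:(q,\beta)\ultrato(q_s,\beta_s)_{s:\mu}$ — each unique if it exists — subject only to the \emph{property} that their witnesses $\bar f,\bar g$ in $\vpt(\E;X)$ satisfy $\bar g\circ\theta=(\theta_s)_{s:\mu}\circ\bar f$. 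So $\Psi$ is an isomorphism as soon as one proves: $(\theta,(p,\alpha),(q,\beta))\cleq(\theta_s,(p_s,\alpha_s),(q_s,\beta_s))_{s:\mu}$ in $T^{\amp}_1$ iff $(p,\alpha)\cleq(p_s,\alpha_s)_{s:\mu}$, $(q,\beta)\cleq(q_s,\beta_s)_{s:\mu}$ and $\bar g\circ\theta=(\theta_s)_{s:\mu}\circ\bar f$; functoriality of $\Psi$ and its compatibility with the two projections and with $\vpt(\theta)$ then hold by construction, and the same argument at level $2$ makes $(\vpt(T^{\amp}_{\bullet}))$ the groupoid kernel of $\vpt(\pi)$ as required by \Cref{cons:strategyproof}.

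The "if" direction of that equivalence is a check on the basic opens $V_{\underline i,A,\underline j,B}$ of $T^{\amp}_1$, mirroring the proof of \Cref{lem:T0ample}: such an open containing the source records $\alpha(\underline i)\in A_p$, $\beta(\underline j)\in B_q$, $\theta(\alpha(\underline i))=\beta(\underline j)$; the first two conditions pass to $\mu$-almost all terms via the convergence hypotheses in $T^{\amp}_0$ and \Cref{lem:T0ample}, and $\theta_s(\alpha_s(\underline i))=\beta_s(\underline j)$ for $\mu$-all $s$ follows by applying $\bar g\circ\theta=(\theta_s)_{s:\mu}\circ\bar f$ on $M$ — hence on $M^n$, since $\int_{\mu}$ commutes with the finite products presenting $M^n$ by \Cref{lem:up-prod} — at the element $\alpha(\underline i)$, using $\bar f_M(\alpha(\underline i))=(\alpha_s(\underline i))_{s:\mu}$ and $\bar g_M(\beta(\underline j))=(\beta_s(\underline j))_{s:\mu}$. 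For "only if", continuity of the source and target maps $T^{\amp}_1\to T^{\amp}_0$ gives $(p,\alpha)\cleq(p_s,\alpha_s)$ and $(q,\beta)\cleq(q_s,\beta_s)$, hence (\Cref{lem:T0ample}) the witnesses $\bar f,\bar g$; the remaining identity is read off the étale space over $T^{\amp}_1$ underlying the equivariant sheaf $\pi^{*}(M)$ of \Cref{cons:T1} — whose fibre over $(\theta',(p',\alpha'),(q',\beta'))$ is $M_{p'}$, with source-projection the identity on fibres and target-projection $m'\mapsto\theta'_M(m')$ — by pulling its local-homeomorphism property along the convergence: writing $\bar f_M(m)=(m_s)_{s:\mu}$, this yields $\bar g_M(\theta_M(m))=(\theta_{s,M}(m_s))_{s:\mu}$ for every $m\in M_p$, whence $\bar g\circ\theta=(\theta_s)_{s:\mu}\circ\bar f$ by the generating property of $M$. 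The one genuinely non-formal step is this last element-tracking: one must verify that the section of the graph-sheaf over $T^{\amp}_1$ extending $(m,\theta_M(m))$ restricts, along the source, to the section of $\pi^{*}(M)$ over $T^{\amp}_0$ witnessing $\bar f_M(m)$, and along the target to the one witnessing $\bar g_M(\theta_M(m))$; everything else is bookkeeping.
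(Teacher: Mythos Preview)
Your proof is correct and follows the same overall architecture as the paper's: match objects, observe that both hom-sets are subsingletons, and reduce to the biconditional between ultraconvergence in $T^{\amp}_1$ and the triple condition (two convergences in $T^{\amp}_0$ plus $\bar g\circ\theta=(\theta_s)\circ\bar f$). Your ``if'' direction via the basic opens $V_{\underline i,A,\underline j,B}$ is exactly the paper's argument.

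The one genuine difference is in the ``only if'' direction. You extract the identity $\bar g\circ\theta=(\theta_s)\circ\bar f$ by tracking elements through the étale space of the equivariant sheaf $s^*\pi^*(M)\simeq t^*\pi^*(M)$ over $T^{\amp}_1$, using that maps of étale spaces commute with unique lifts. The paper instead stays at the level of basic opens: given $m=\alpha(i)\in M_p$, pick $j$ with $\beta(j)=\theta_M(\alpha(i))$ (using surjectivity of $\beta$), observe $v\in V_{i,M,j,M}$, hence $(v_s)\in V_{i,M,j,M}$ for $\mu$-all $s$, and read off $\theta_s(\alpha_s(i))=\beta_s(j)$ directly. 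The paper's route is more elementary and avoids the ``non-formal step'' you flag at the end --- your verification that the graph-sheaf sections restrict correctly along source and target is precisely what the paper bypasses by working with the indexings $\alpha,\beta$ explicitly. Conversely, your sheaf-theoretic phrasing makes clearer \emph{why} the identity holds (it is the naturality of the equivariant structure) and would generalize more readily if the generating object $M$ were replaced by something less concrete.
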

\begin{proof}
    We describe $\vpt(T^{\amp}_1)$ and show that we get the 2-pullback of $\vpt(\pi)$ along itself given by \Cref{cons:strict2pb}. 
    It is clear at the level of objects: objects of $\vpt(T^{\amp}_1)$ are given by $(\theta,a,b)$ with $a$ and $b$ objects of $\vpt(T^{\amp}_0)$ and $\theta$ an isomorphism between $\vpt(\pi)(a)$ and $\vpt(\pi)(b)$ in $\vpt(\E;X)$.
    
    We fix $v \coloneqq (\theta,(p,\alpha),(q,\beta))$ and $(v_s \coloneqq (\theta_s,(p_s,\alpha_s),(q_s,\beta_s)))_{s:\mu}$ points of $T^{\amp}_1$. 
    An ultraarrow from $v$ to $(v_s)_{s:\mu}$ in the 2-pullback amounts, by \Cref{lem:T0ample}, to ultraconvergences $(p,\alpha)\cleq(p_s,\alpha_s)$ (resp. $(q,\beta)\cleq(q_s,\beta_s)$) witnessed by some $f : p\ultrato(p_s)$ (resp. $g : q\ultrato (q_s)$) such that $(\theta_s)\circ f = g \circ\theta$; hence there cannot be more than one ultraarrow from $v$ to $(v_s)_{s:\mu}$ in the 2-pullback. 

    Suppose that there is such an ultraarrow, we show that $v\cleq(v_s)_{s:\mu}$ in $T^{\amp}_1$.
    Let $V_{\underline{i},A,\underline{j},B}$ be a basic open containing $v$, so $\alpha(\underline{i})\in A_p$ and $\beta(\underline{i})\in B_p$. Using \Cref{lem:T0ample} we get $\alpha_s(\underline{i})\in A_{p_s}$ and $\beta_s(\underline{j})\in B_{p_s}$ for $\mu$-all $s$, and $f(\alpha(\underline{i})) = (\alpha_s(\underline{i}))$, $g(\beta(\underline{j})) = (\beta_s(\underline{j}))$. The following sequence of equalities
    \[(\theta_s(\alpha_s(\underline{i}))) = (\theta_s(f_M(\alpha(\underline{i})))) = g_M(\theta(\alpha(\underline{i}))) = g_M(\beta(\underline{j})) = (\beta_s(\underline{j}))\]
    shows that $(v_s)_{s:\mu}$ is in $V_{\underline{i},A,\underline{j},B}$.

    Conversely, suppose that $v\cleq(v_s)_{s:\mu}$ in $T^{\amp}_1$, we have $(p,\alpha)\cleq(p_s,\alpha_s)$ (resp. $(q,\beta)\cleq(q_s,\beta_s)$) witnessed by some $f : p\ultrato(p_s)$ (resp. $g:q\ultrato(q_s)$). It only remains to show that $(\theta_s)\circ f = g \circ\theta$, it is enough to show the equality on $M$; let $\alpha(i)\in M_p$ and $j$ such that $\beta(j) = \theta_M(\alpha(i))$, then $v\in V_{i,M,j,M}$ and so $(v_s)\in V_{i,M,j,M}$, this yields
    \[\theta_s(f_M(\alpha(i))) = \theta_s(\alpha_s(i)) = \beta_s(j) = g_M(\beta(j)) = g_M(\theta(\alpha(i))),\]
    showing $(\theta_s)\circ f = g \circ\theta$ on $M$ as wanted.
\end{proof}

\begin{cor}\label{cor:T1ample-desc}
    The descent-cocone $(\vpt(\pi),\vpt(\theta))$ is universal.
\end{cor}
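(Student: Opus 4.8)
The approach is simply to combine the two results that immediately precede this corollary. First, \Cref{cor:T0ample-desc} tells us that $\vpt(\pi) : \vpt(T_0^{\amp}) \to \vpt(\E;X)$ is an effective descent functor; unwinding the definition, this says precisely that the canonical descent-cocone of $\vpt(\pi)$ over its own groupoid kernel is universal in the sense of \Cref{def:desc-cocone-cat}. Second, \Cref{lem:T1ample} identifies $\vpt(T_1^{\amp})$, together with its topological-groupoid structure over $\vpt(T_0^{\amp})$, with the $2$-kernel of $\vpt(\pi)$; thus $(\vpt(T^{\amp}_{\bullet}))$ is exactly the groupoid kernel of $\vpt(\pi)$.

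It then remains to check that, under the identification of \Cref{lem:T1ample}, the descent-cocone $(\vpt(\pi),\vpt(\theta))$ corresponds to the canonical descent-cocone of $\vpt(\pi)$ over that groupoid kernel. This is routine: $\vpt(\theta)$ is obtained by applying the strict $2$-functor $\vpt$ to the comparison $2$-cell $\theta$ of \Cref{cons:T1}, and since $\vpt$ preserves the strict $2$-pullbacks of \Cref{cons:strict2pb}, this is precisely the canonical $2$-cell attached to the pseudokernel $\vpt(T_1^{\amp}) \to \vpt(T_0^{\amp})$ with its two projections. Therefore $(\vpt(\pi),\vpt(\theta))$ is universal, which is the assertion. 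I do not expect any genuine obstacle in this step: all the real work has already been carried out in \Cref{prop:desc-vUlt}, \Cref{cor:T0ample-desc}, and the computation of the $2$-kernel in \Cref{lem:T1ample}.
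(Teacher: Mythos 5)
Your proposal is correct and follows exactly the paper's own (very terse) proof: the paper likewise deduces the corollary by combining \Cref{cor:T0ample-desc} (that $\vpt(\pi)$ is effective descent, i.e.\ its canonical cocone over its groupoid kernel is universal) with \Cref{lem:T1ample} (that $\vpt(T^{\amp}_1)$ with the 2-cell $\vpt(\theta)$ is precisely that 2-kernel). Your additional remark that one must identify $(\vpt(\pi),\vpt(\theta))$ with the canonical cocone over the kernel is a reasonable point of care, but it is already built into the statement of \Cref{lem:T1ample}, so there is no gap.
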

\begin{proof}
    Follows from \Cref{cor:T0ample-desc} and \Cref{lem:T1ample}.
\end{proof}

The last missing piece is to show that the groupoid $(T^{\amp}_{\bullet})$ of amply indexed models represents $\E$.
As announced, this will follow from \cite[Proposition 8.24]{Josh}. Wrigley's theorem gives a very general condition for a groupoid of indexed models to represent the topos. Before stating the theorem, let us note that one can generalize our construction to other notions of indexings: if for each point $p\in X$ we give $\In(p)$ a subset of indexings $\kappa \paronto M_p$ of $p$, we can construct a topological groupoid $(T^{\In}_{\bullet})$ by an analogous construction to the above; for example, by taking $\In(p)$ to be the set of ample indexings on $p$ we get back our topological groupoid $(T^{\amp}_{\bullet})$ of amply indexed models.

\begin{Thm}[\cite{Josh}]
    Suppose that all $\In(p)$ are nonempty and that for each $\alpha : \kappa \paronto M_p$ in $\In(p)$ and for each partial surjection $f : \kappa\paronto\kappa$ with $\dom(f)$ a cofinite subset of $\kappa$ and all fibers of $f$ finite, the indexing $\alpha\circ f : \kappa\paronto M_p$ is again in $\In(p)$.
    Then $(T^{\In}_{\bullet})$ represents $\E$, \ie the descent-cocone $(\pi,\theta)$ associated to $(T^{\In}_{\bullet})$ is universal.
\end{Thm}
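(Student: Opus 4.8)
The plan is to derive the statement from the Joyal--Tierney descent theorem for open surjections of topoi, combined with an identification of the groupoid $(T^{\In}_{\bullet})$ with the groupoid kernel of $\pi$; this is the Butz--Moerdijk strategy, and the combinatorial hypothesis on $\In$ will appear exactly at the identification step.

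First I would check that the geometric morphism $\pi : \Sh(T^{\In}_0)\to\E$ is an \emph{open surjection}. Surjectivity is immediate: since every $\In(p)$ is nonempty, each point $p\in X$ is the image under $\pi$ of at least one indexed model, so the points of $T^{\In}_0$ form a family of points of $\E$ no less separating than $X$ itself, and hence $\pi^*$ is conservative. For openness I would use that the subobjects of the $M^n$ generate $\E$: for $A\rightarrowtail M^n$ the inverse image $\pi^*(A)\rightarrowtail\pi^*(M^n)$ is, by the very definition of the logical topology, the open subspace $\bigcup_{\underline i}W_{\underline i,A,\id}$ of $\bigsqcup_{(p,\alpha)}M^n_p$; since $\pi^*$ is cocontinuous and such subobjects generate, $\pi$ sends a generating family to opens of étale maps, which forces $\pi$ to be open. (Here the logical topology is designed precisely so that this works.)

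Next, by the descent theorem for open surjections, $\pi$ is of effective descent, so $\E$ carries a universal descent-cocone over its groupoid kernel $\Sh(T^{\In}_0)\times_{\E}\Sh(T^{\In}_0)\rightrightarrows\Sh(T^{\In}_0)$. It therefore remains to identify this groupoid kernel with $(\Sh(T^{\In}_{\bullet}))$ compatibly with source, target, composition and with the canonical cocones. On points this is routine: a point of $\Sh(T^{\In}_0)\times_{\E}\Sh(T^{\In}_0)$ is a pair of indexed models $(p,\alpha),(q,\beta)$ together with an isomorphism $\theta:p\xrightarrow{\sim}q$ of the underlying points of $\E$, which is exactly a point of $T^{\In}_1$; the groupoid operations on both sides agree by construction.

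The main obstacle is showing that the comparison geometric morphism $\Sh(T^{\In}_1)\to\Sh(T^{\In}_0)\times_{\E}\Sh(T^{\In}_0)$ is an equivalence, i.e.\ that the logical topology generated by the $V_{\underline i,A,\underline j,B}$ is the correct one and that $T^{\In}_1$ (and the source/target maps $T^{\In}_1\to T^{\In}_0$) have enough points. This is exactly where the hypothesis on $\In$ is used: the closure of each $\In(p)$ under precomposition $\alpha\mapsto\alpha\circ f$ with cofinite-domain, finite-fibre partial surjections $f:\kappa\paronto\kappa$ guarantees that an ultrafilter's worth of reparametrisation data can be absorbed into a single reindexing of $\kappa$ (because $\kappa$ is infinite), so that arbitrary limit points of the pullback are realised by reindexing the given indexings; together with the fact that every ``formula-with-parameters'' open of the pullback pulls back from some $V_{\underline i,A,\underline j,B}$, this makes the comparison an open surjection between topoi with the same points, hence an equivalence, and makes $T^{\In}_1\to T^{\In}_0$ open so that $(T^{\In}_{\bullet})$ is an open groupoid with $(\pi,\theta)$ the universal descent-cocone. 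I expect the delicate point to be precisely the verification that this reindexing-closure suffices for the realisability statement, and there I would import the combinatorial lemmas of Butz--Moerdijk in the form adapted by Wrigley.
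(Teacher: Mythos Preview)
The paper does not prove this theorem at all: it is quoted verbatim as a result of Wrigley \cite{Josh} (specifically \cite[Proposition~8.24]{Josh}) and used as a black box to deduce the next corollary. There is therefore no ``paper's own proof'' to compare against; the author explicitly delegates this step to the cited reference.

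Your sketch is a reasonable outline of the Butz--Moerdijk/Joyal--Tierney strategy that Wrigley's result generalises: open surjection $\Rightarrow$ effective descent, then identify the groupoid kernel with $\Sh(T^{\In}_1)$. The one place where your explanation is shaky is the role of the combinatorial closure hypothesis. You describe it as allowing ``an ultrafilter's worth of reparametrisation data'' to be absorbed into a single reindexing; but ultrafilters are not what is at stake here. In Wrigley's argument the cofinite-domain, finite-fibre reparametrisations are exactly what is needed so that, given finitely many elements $\underline i$, $\underline j$ of $\kappa$ and an isomorphism $\theta$, one can modify the indexings within $\In(p)$ to realise any prescribed finite matching $\theta(\alpha(\underline i))=\beta(\underline j)$; this is what makes the basic opens $V_{\underline i,A,\underline j,B}$ a genuine basis for the pullback topology and ensures the comparison $\Sh(T^{\In}_1)\to\Sh(T^{\In}_0)\times_{\E}\Sh(T^{\In}_0)$ is an equivalence. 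If you intend to write out a self-contained proof rather than cite Wrigley, that is the point to make precise.
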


\begin{cor}\label{cor:ample-represent}
    The descent-cocone $(\pi,\theta)$ is universal.
\end{cor}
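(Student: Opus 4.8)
The plan is to deduce this immediately from Wrigley's theorem stated just above, applied to the choice of indexing data $\In(p) \coloneqq \{\alpha : \kappa\paronto M_p \mid \alpha \text{ is ample}\}$. As noted in the discussion preceding the theorem, the groupoid $(T^{\In}_{\bullet})$ built from this $\In$ is precisely the groupoid $(T^{\amp}_{\bullet})$ of amply indexed models, so it suffices to check that this $\In$ satisfies the two hypotheses of Wrigley's theorem.

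The first hypothesis — that every $\In(p)$ is nonempty — is exactly \Cref{lem:ample}(1). The second hypothesis asks that $\In(p)$ be stable under precomposition with a partial surjection $f : \kappa\paronto\kappa$ whose domain is a cofinite subset of $\kappa$ and all of whose fibers are finite. Given an ample indexing $\alpha : \kappa\paronto M_p$, the composite $\alpha\circ f$ is again a partial surjection onto $M_p$ (a composite of surjections is surjective), with domain $f^{-1}(\dom(\alpha))$; so the only thing to verify is that $\alpha\circ f$ is again ample, i.e.\ that $|\kappa\setminus\dom(\alpha\circ f)| = \kappa$. Writing
\[\kappa\setminus\dom(\alpha\circ f) = (\kappa\setminus\dom(f))\ \sqcup\ f^{-1}(\kappa\setminus\dom(\alpha)),\]
and using that $f$ is surjective onto $\kappa$, the second summand surjects onto $\kappa\setminus\dom(\alpha)$, which has cardinality $\kappa$ by ampleness of $\alpha$; hence it has cardinality $\kappa$ (being also bounded above by $\kappa$). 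Therefore $\alpha\circ f\in\In(p)$, and Wrigley's theorem applies, yielding that the descent-cocone $(\pi,\theta)$ over $(\Sh(T^{\amp}_{\bullet}))$ is universal, i.e.\ $(T^{\amp}_{\bullet})$ represents $\E$.

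I do not expect any real obstacle here: all the substantive work has already been carried out in \Cref{lem:ample} (the ample condition is attainable and can be propagated along ultramorphisms) and in Wrigley's theorem itself; what remains is only the elementary cardinal computation above. The one point worth being careful about is that it is precisely the surjectivity of the \emph{partial} map $f$ onto all of $\kappa$ that makes $f^{-1}(\kappa\setminus\dom(\alpha))$ large — finiteness of the fibers of $f$ plays no role in this particular closure check (it is used elsewhere in Wrigley's argument, not here).
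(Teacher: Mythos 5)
Your proposal is correct and follows essentially the same route as the paper: both reduce the corollary to Wrigley's theorem and verify the closure of ample indexings under precomposition with a partial surjection $f:\kappa\paronto\kappa$ by observing that surjectivity of $f$ forces $f^{-1}(\kappa\setminus\dom(\alpha))\subseteq\kappa\setminus\dom(\alpha\circ f)$ to have cardinality $\kappa$. Your explicit decomposition of $\kappa\setminus\dom(\alpha\circ f)$ and the remark that finiteness of the fibers of $f$ is not needed for this check are slightly more detailed than the paper's one-line argument, but the substance is identical.
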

\begin{proof}
    We apply the theorem above. Let $\alpha : \kappa\paronto M_p$ be an ample indexing, then for any partial surjection $f : \kappa\paronto\kappa$ the composite $\alpha' \coloneqq \alpha\circ f : \kappa\paronto M_p$ is also an ample indexing; indeed $f$ induces a partial surjection $\kappa\setminus\dom(\alpha')\paronto\kappa\setminus\dom(\alpha)$ showing that $|\kappa\setminus\dom(\alpha')|\geq|\kappa\setminus\dom(\alpha)|$.
\end{proof}

Putting everything together, we get the following.

\begin{cor}\label{cor:ample-final}
    The functor $\ev : \E \to \vSh(\vpt(\E;X))$ is an equivalence.
\end{cor}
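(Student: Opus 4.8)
The plan is to combine the two descent results established above — \Cref{cor:ample-represent} on the $\GTop$ side and \Cref{cor:T1ample-desc} on the $\vUlt$ side — with the dictionary between equivariant sheaves and equivariant ultrasheaves recorded in \Cref{rmk:equivsheaves-equiv}, and then to verify that the resulting equivalence is indeed the evaluation functor of \Cref{def:ev-functor}.

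Concretely, I would first invoke \Cref{cor:ample-represent}: since the descent-cocone $(\pi,\theta)$ over $(\Sh(T^{\amp}_{\bullet}))$ is universal, the groupoid $(T^{\amp}_{\bullet})$ of amply indexed models represents $\E$, so that
\[\E \simeq \GTop(\E,\SetO) \simeq \Desc(\Sh(T^{\amp}_{\bullet});\SetO) = \Sh_{eq}(T^{\amp}_{\bullet}).\]
Next, \Cref{cor:T1ample-desc} states that the descent-cocone $(\vpt(\pi),\vpt(\theta))$ over $(\vpt(T^{\amp}_{\bullet}))$ is universal; evaluating this representability at the v-ultracategory $\Set$ gives
\[\vSh(\vpt(\E;X)) = \vUlt(\vpt(\E;X),\Set) \simeq \Desc(\vpt(T^{\amp}_{\bullet});\Set) = \vSh_{eq}(\vpt(T^{\amp}_{\bullet})).\]
Finally, \Cref{rmk:equivsheaves-equiv} — which is \Cref{prop:0dimcase} applied levelwise, using that $\vpt$ preserves the strict $2$-pullbacks of \Cref{cons:strict2pb} — identifies $\Sh_{eq}(T^{\amp}_{\bullet})$ with $\vSh_{eq}(\vpt(T^{\amp}_{\bullet}))$. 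Splicing these three equivalences together produces an equivalence $\E \xrightarrow{\sim} \vSh(\vpt(\E;X))$.

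The remaining point is to check that this composite coincides with $\ev$. Tracing an object $E \in \E$ through it: $E$ is first sent to the étale space $\pi^*(E) \to T^{\amp}_0$ carrying its canonical equivariant structure over $(T^{\amp}_{\bullet})$ from \Cref{cons:T1}; the equivalence of \Cref{prop:0dimcase} then carries this to the ultrasheaf over $\vpt(T^{\amp}_0)$ whose stalk at an amply indexed model $(p,\alpha)$ is $(\pi^*E)_{(p,\alpha)} = E_p$; and descending along the universal cocone $(\vpt(\pi),\vpt(\theta))$ yields the ultrasheaf over $\vpt(\E;X)$ with value $E_p$ at each $p \in X$, which is precisely $\ev(E)$ by \Cref{def:ev-functor}. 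The compatibility of the equivariant structures at each step is again guaranteed by $\vpt$ preserving the strict $2$-pullbacks of \Cref{cons:strict2pb}, together with the identification of $\vpt(T^{\amp}_1)$ with the $2$-kernel of $\vpt(\pi)$ from \Cref{lem:T1ample}. I expect this bookkeeping — confirming that the chain of abstract equivalences assembles into $\ev$ and not merely into some equivalence — to be the only genuine work; the existence of the equivalence itself is purely formal once \Cref{cor:ample-represent} and \Cref{cor:T1ample-desc} are in hand.
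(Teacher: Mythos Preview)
Your proposal is correct and follows essentially the same route as the paper: the three ingredients are exactly \Cref{cor:ample-represent}, \Cref{cor:T1ample-desc}, and the $0$-dimensional case via \Cref{rmk:equivsheaves-equiv}. The only cosmetic difference is that the paper packages these into a single commutative rectangle and concludes by two-out-of-three that the top arrow $\ev$ is an equivalence, whereas you chain the equivalences and then separately verify that the composite is $\ev$; the paper's diagram-chasing is precisely your verification step, done upfront.
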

\begin{proof}
    The following diagram commutes. 
    \[\begin{tikzcd}
    	\E & {\vSh(\vpt(\E;X))} \\
    	{\GTop(\E,\SetO)} & {\vUlt(\vpt(\E;X),\Set)} \\
     	{\Desc(\Sh(T_{\bullet});\SetO)} & {\Desc(\vpt(T_{\bullet});\Set)} 
    	\arrow["\ev", dashed, from=1-1, to=1-2]
    	\arrow["{\simeq}"{marking, allow upside down}, draw=none, from=1-1, to=2-1]
    	\arrow["{=}"{marking, allow upside down}, draw=none, from=1-2, to=2-2]
    	\arrow["{\vpt}", from=2-1, to=2-2]
    	\arrow["{(\pi{,}\theta)^*}"', from=2-1, to=3-1]
    	\arrow["{(\vpt(\pi){,}\vpt(\theta))^*}", from=2-2, to=3-2]
    	\arrow["{\vpt}", from=3-1, to=3-2]
    \end{tikzcd}\]
    The left arrow is an equivalence by \Cref{cor:ample-represent}, the right one by \Cref{cor:T1ample-desc}, and the bottom one by the 0-dimensional case (see \Cref{rmk:equivsheaves-equiv}).
    Thus the top arrow is also an equivalence.
\end{proof}

We now conclude this section by showing \Cref{prop:small-sep-case}. 
\begin{proof}
    For any $X$ a small separating point of $\E$ we can choose $M$ and $\kappa$ as a in \Cref{cons:XMk}, and then apply \Cref{cor:ample-final}.
\end{proof}

We have thus proved a particular case of the desired reconstruction result: the evaluation functor $\ev : \E\to\vSh(\vpt(\E;X))$ is an equivalence when $X$ is a small separating set of points of $\E$.

\section{The reconstruction theorem}\label{sec:ccl}

In this section we generalize \Cref{prop:small-sep-case} to the case where $X$ is neither small nor separating. We show that $\ev : \E\to\vSh(\vpt(\E;X))$ is the inverse image of the \emph{restriction} of the topos $\E$ to $X$; the restriction of topoi is defined as follows.

\begin{defn}
    Let $\E$ be a topos and $X\subseteq\pt(\E)$ be a subclass of points of $\E$. We define $\E_{\res{X}}$ the \emph{restriction of $\E$ to $X$} first in the case $X$ small, and then we extend the definition to any $X$. If $X$ is small, the surjection--embedding factorization of the geometric morphism $\Set^X\to\E$ gives a subtopos of $\E$
    \[\Set^X\twoheadrightarrow \E_{\res{X}}\hookrightarrow \E.\]
    For a general $X$, we define $\E_{\res{X}}$ to be the directed union of the subtopoi $(\E_{\res{A}})$ of $\E$, where $A$ is ranging over all small subsets of $X$. This is well-defined as $\E$ has only a small number of subtopoi (as they are classified by Lawvere-Tierney topologies \cite[Proposition 4.15]{ToposTheory}).
\end{defn}

\begin{rmk}
    The topos $\E_{\res{X}}$ is the smallest subtopos of $\E$ containing all points of $X$, or equivalently the biggest subtopos of $\E$ for which $X$ is separating; note that $\E_{\res{X}}$ can have more points than $X$. Another construction of the restriction to all points of a topos can be found in \cite[Corollary 7.18]{ToposTheory}
\end{rmk}

We can now state and prove the reconstruction result in its full generality.
\begin{Thm}\label{Thm:thetrueone}
    For a topos $\E$ and $X\subseteq\pt(\E)$, the category $\vSh(\vpt(\E;X))$ is equivalent to the subtopos $\E_{\res X}\hookrightarrow\E$, and the inverse image of this embedding is given by $\ev : \E \to \vSh(\vpt(\E;X))$.
\end{Thm}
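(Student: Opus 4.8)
The plan is to reduce everything to the small separating case already established in \Cref{prop:small-sep-case}, first for $X$ small and then, by a filtered-colimit argument, for arbitrary $X$. So suppose first that $X$ is small, and let $i:\E_{\res X}\hookrightarrow\E$ be the subtopos produced by the surjection--embedding factorization $\Set^X\twoheadrightarrow\E_{\res X}\stackrel{i}{\hookrightarrow}\E$ of the geometric morphism classifying the family $X$. Every point $x\in X$ factors through $\E_{\res X}$, being the composite $\Set\to\Set^X\to\E_{\res X}\stackrel{i}{\to}\E$, and since $\Set^X\to\E_{\res X}$ is a surjection the class $X$ is a small separating set of points of $\E_{\res X}$. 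First I would check that $\vpt(\E;X)$ and $\vpt(\E_{\res X};X)$ are identified as v-ultracategories: for $x\in X$ the stalk functor factors as $\E\stackrel{i^*}{\to}\E_{\res X}\stackrel{\bar x^*}{\to}\Set$, and since $i^*$ is a localization, hence essentially surjective, precomposition with $i^*$ identifies natural transformations $\bar a^*(-)\Rightarrow\int_{s:\mu}\bar b_s^*(-)$ of functors on $\E_{\res X}$ with natural transformations $a^*(-)\Rightarrow\int_{s:\mu}b_s^*(-)$ of functors on $\E$, compatibly with identities and composition. Applying \Cref{prop:small-sep-case} to $\E_{\res X}$ and its small separating set $X$ then gives that $\ev:\E_{\res X}\to\vSh(\vpt(\E_{\res X};X))=\vSh(\vpt(\E;X))$ is an equivalence; and by \Cref{def:ev-functor}, $\ev_\E(E)(x)=x^*(E)=\bar x^*(i^*E)=\ev_{\E_{\res X}}(i^*E)(x)$ (and similarly on ultraarrows), so $\ev_\E$ is the composite of $i^*:\E\to\E_{\res X}$ with this equivalence. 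That is exactly the assertion for $X$ small.

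For arbitrary $X$ I would use that $\vpt(\E;X)$ is the filtered colimit in $\vUlt$ of its sub-v-ultracategories $\vpt(\E;A)$, with $A$ ranging over the filtered poset of small subsets of $X$: the class of objects is $\bigcup_A A=X$, and every hom-set $\vpt(\E;X)(a,(b_s)_{s:\mu})$ already lies inside $\vpt(\E;A)$ as soon as $A$ contains $a$ and the small family $(b_s)$. Since $\vSh=\vUlt(-,\Set)$ carries this filtered colimit to the corresponding cofiltered limit of categories, $\vSh(\vpt(\E;X))\simeq\lim_A\vSh(\vpt(\E;A))$ with transition functors the restrictions; by the small case each $\vSh(\vpt(\E;A))\simeq\E_{\res A}$, and for $A\subseteq A'$ the restriction functor corresponds under these equivalences to the inverse image of the subtopos inclusion $\E_{\res A}\hookrightarrow\E_{\res{A'}}$ (checked on stalks, using that a point in $A$ lands in $\E_{\res A}$). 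Hence $\vSh(\vpt(\E;X))\simeq\lim_A\E_{\res A}$, a cofiltered limit of localizations of $\E$, and this should be precisely the directed union $\bigcup_A\E_{\res A}=\E_{\res X}$: the family $\{\E_{\res A}\}$ is directed and essentially small (there are only set-many subtopoi of $\E$), so its join in the lattice of subtopoi is computed by this limit. Tracing $\ev$ through this chain of equivalences as in the small case shows that it is the inverse image of $\E_{\res X}\hookrightarrow\E$.

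The main obstacle I anticipate is the general case: confirming that $\vpt(\E;X)=\colim_A\vpt(\E;A)$ is a genuine (bi)colimit in $\vUlt$ which $\vSh$ converts into an honest pseudolimit of categories, and above all identifying that pseudolimit $\lim_A\E_{\res A}$ with the directed union of subtopoi $\E_{\res X}$ — i.e. showing that a compatible system of $j_A$-sheaves glues to a single $\bigvee_A j_A$-sheaf. A secondary technical point is the clean equality $\vpt(\E;X)=\vpt(\E_{\res X};X)$, namely that passing to a subtopos still containing $X$ does not change the virtual ultracategory of points computed on $X$; this is where the essential surjectivity of the localization $i^*$ does the work.
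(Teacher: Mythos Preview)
Your treatment of the small case is exactly the paper's: reduce to \Cref{prop:small-sep-case} applied to $\E_{\res X}$, using that $\vpt(\E_{\res X};X)\simeq\vpt(\E;X)$ because $i^*$ is essentially surjective.

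For the general case you take the filtered colimit over \emph{all} small $A\subseteq X$ and are then left with the task of identifying the cofiltered pseudolimit $\lim_A\E_{\res A}$ with the join $\E_{\res X}$ in the lattice of subtopoi --- which you rightly flag as the main obstacle. The paper sidesteps this entirely by a cofinality trick: since $\E$ has only a small set of subtopoi, the directed family $\{\E_{\res A}\}_A$ stabilizes, so there exists some small $A_0\subseteq X$ with $\E_{\res{A_0}}=\E_{\res X}$. The paper therefore indexes not over all small $A$ but over the cofinal subposet $\eI$ of those small $A$ that are already separating for $\E_{\res X}$. On this subposet every $\vSh(\vpt(\E;A))\simeq\E_{\res A}=\E_{\res X}$, the diagram is \emph{constant}, and the pseudolimit is trivially $\E_{\res X}$ --- no gluing of $j_A$-sheaves needed. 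Your approach would work once you invoke exactly this cofinality (it is the cleanest way to compute your $\lim_A\E_{\res A}$), but as written the identification of the limit with the join remains a genuine gap.
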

\begin{proof}
    If $X$ is small, then $X$ is a small separating set of points of $\E_{\res{X}}$, and the assertion follows from \Cref{prop:small-sep-case} by noticing that $\vpt((\E_{\res{X}});X) \to \vpt(\E;X)$ is an equivalence.

    For a general $X$, let us consider $\eI$ the filtered poset of small subsets $A\subseteq X$ that are separating for $\E_{\res X}$.
    The following sequence of equations
    \begin{align*}
        \vSh(\vpt(\E;X)) &= \vUlt(\vpt(\E;X),\Set)
        \\& \simeq \vUlt(\colim_{A}\vpt(\E;A),\Set) 
        \\& \simeq \lim_{A}\vUlt(\vpt(\E;A),\Set)
        \\& = \lim_{A}\vSh(\vpt(\E;A))
    \end{align*}
    shows that the $\vSh(\vpt(\E;X))$ is the pseudolimit of $\vSh(\vpt(\E;-)) : \eI^{\op}\to\CAT$.
    By the first case, the diagram $\vSh(\vpt(\E;-)) : \eI^{\op}\to\CAT$ is actually constant to $\E_{\res X}$, and so $\vSh(\vpt(\E;X))\simeq\E_{\res X}$.

    For the last assertion, it is enough to notice that for any such $A$, the functor $\ev : \E \to \vSh(\vpt(\E;X))$ postcomposed with the restriction $\vSh(\vpt(\E;X)) \to \vSh(\vpt(\E;A))$ gives the evaluation $\ev : \E \to \vSh(\vpt(\E;A))$.
\end{proof}

\begin{rmk}\label{rmk:get-Lurie-back}
    For $\E$ a coherent topos, Deligne's completeness theorem ensures that $\E$ has enough points, and so the theorem above ensures that $\ev : \E\to\vSh(\vpt(\E))$ is an equivalence, and as explained in \Cref{ex:ultrasheaf}.\ref{ex:ultrasheaf-ultracat} we recover Lurie's \Cref{Thm:Lurie}.
\end{rmk}

The above theorem implies in particular that for any topos $\E$ the virtual ultracategory $\vpt(\E)$ is bounded; so $\vpt : \GTop\to\vUlt$ corestricts to a 2-functor $\vpt : \GTop\to\vUltb$. We now present the above results as a pseudoidempotent 2-adjunction.

\begin{cor}\label{cor:final-adj}
    There is a pseudoidempotent 2-adjunction
    \[\begin{tikzcd}
        \GTop && \vUltb
	    \arrow[""{name=0, anchor=center, inner sep=0}, "{\vpt}"', curve={height=18pt}, from=1-1, to=1-3]
	    \arrow[""{name=1, anchor=center, inner sep=0}, "\vSh"', curve={height=18pt}, from=1-3, to=1-1]
        \arrow["\vdash"{anchor=center, rotate=90}, draw=none, from=0, to=1]
    \end{tikzcd}\]
    with counit $\epsilon_{\E} : \vSh(\vpt(\E))\to\E$ given by the evaluation functor. The induced comonad is given by the restriction of a topos to all its points.
    In particular, the 2-category of topoi with enough points embeds reflectively in $\vUltb$.
\end{cor}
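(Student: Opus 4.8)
The plan is to assemble the $2$-adjunction from a pseudonatural equivalence of hom-categories — which is exactly the ``$\Set$ as dualizing object'' phenomenon alluded to in the overview — and then run the formalism of pseudoidempotent $2$-adjunctions, feeding in \Cref{Thm:thetrueone} at the two places where geometric content is needed. First I would check the two $2$-functors are well defined on the stated categories: $\vSh\colon\vUltb\to\GTop$ is the corestriction obtained just before \Cref{sec:desc}, and $\vpt\colon\GTop\to\vUltb$ does land in $\vUltb$ because, by \Cref{Thm:thetrueone} applied to $X=\pt(\E)$, the category $\vSh(\vpt(\E))\simeq\E_{\res{\pt(\E)}}$ is a (sub)topos, hence accessible, so $\vpt(\E)$ is bounded.

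Next I would construct, for $X\in\vUltb$ and $\E\in\GTop$, an equivalence
\[
\Phi_{X,\E}\colon\ \GTop(\vSh(X),\E)\ \xrightarrow{\ \sim\ }\ \vUlt(X,\vpt(\E)),
\]
pseudonatural in both variables, which is precisely the datum of a biadjunction $\vSh\dashv\vpt$. A $1$-cell on the left is a geometric morphism whose inverse image is a lex cocontinuous functor $G\colon\E\to\vSh(X)=\vUlt(X,\Set)$; since finite limits and small colimits in $\vUlt(X,\Set)$ are computed objectwise (\Cref{lem:ultrasheaf-exactness}), for each object $x$ of $X$ the functor $E\mapsto G(E)(x)\colon\E\to\Set$ is again lex cocontinuous, hence the inverse image of a point $p_x$ of $\E$, and the ultra-structure of the ultrasheaves $G(E)$ assembles the $p_x$ into a functor of v-ultracategories $x\mapsto p_x$, $X\to\vpt(\E)$. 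Conversely a functor $H\colon X\to\vpt(\E)$ gives $E\mapsto\ev(E)\circ H\colon X\to\Set$ (using the v-functors $\ev(E)\colon\vpt(\E)\to\Set$ of \Cref{def:ev-functor}), which is lex cocontinuous in $E$ again by \Cref{lem:ultrasheaf-exactness}, hence a geometric morphism $\vSh(X)\to\E$; these constructions are mutually pseudoinverse. Evaluating $\Phi$ at $\id_{\vpt(\E)}$ identifies the counit $\epsilon_\E\colon\vSh(\vpt(\E))\to\E$ as the geometric morphism with inverse image $\ev\colon\E\to\vSh(\vpt(\E))$ from \Cref{def:ev-functor}, and evaluating $\Phi^{-1}$ at $\id_{\vSh(X)}$ identifies the unit $\eta_X\colon X\to\vpt(\vSh(X))$ as the functor sending $x$ to the evaluation point $\ev_x$ of $\vSh(X)$.

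For pseudoidempotency I would use the criterion that $\vSh\dashv\vpt$ is pseudoidempotent as soon as the counit $\epsilon_{\vSh(X)}$ is an equivalence for every $X\in\vUltb$. Now $\vSh(X)=\vUlt(X,\Set)$ always has enough points: by \Cref{lem:ultrasheaf-exactness} a morphism of ultrasheaves is invertible iff it is so on each object of $X$, so the family of evaluation points $(\ev_x)_{x:X_0}$ is jointly conservative. Hence the restriction of the topos $\vSh(X)$ to all of its points equals $\vSh(X)$ itself, and \Cref{Thm:thetrueone} (applied to the topos $\vSh(X)$ and the class of all its points) tells us $\epsilon_{\vSh(X)}\colon\vSh(\vpt(\vSh(X)))\to\vSh(X)$ is the inverse image of this identity subtopos inclusion, i.e.\ an equivalence. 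So the $2$-adjunction is pseudoidempotent; in particular its induced comonad $\vSh\circ\vpt$ on $\GTop$ sends $\E$ to $\vSh(\vpt(\E))\simeq\E_{\res{\pt(\E)}}$ by \Cref{Thm:thetrueone}, which is the restriction of $\E$ to all its points, with counit $\epsilon_\E=\ev$ as above.

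Finally, the reflective embedding is the formal payoff: for a pseudoidempotent adjunction the right adjoint $\vpt$ is $2$-fully faithful on the full sub-$2$-category of objects for which the counit is an equivalence, which by the paragraph above (and \Cref{Thm:thetrueone}) is exactly the sub-$2$-category $\GTop^{\mathrm{ep}}$ of topoi with enough points — here one checks, via the triangle identity $\vpt(\epsilon)\cdot(\eta\vpt)=\id$, that on this subcategory $\vpt$ agrees with the composite of the equivalences $\GTop(\E,\F)\xrightarrow{(\epsilon_\E)^*}\GTop(\vSh(\vpt(\E)),\F)\xrightarrow{\Phi}\vUlt(\vpt(\E),\vpt(\F))$. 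Since moreover $\vSh$ carries every bounded v-ultracategory into $\GTop^{\mathrm{ep}}$, the restricted adjunction $\vSh\dashv\vpt\!\mid_{\GTop^{\mathrm{ep}}}$ exhibits $\GTop^{\mathrm{ep}}$ as a reflective sub-$2$-category of $\vUltb$ with reflector $\vSh$; this also makes good on the Remark following \Cref{prop:vUlt-subcat}. I expect the only genuinely laborious step to be the $2$-categorical bookkeeping — verifying that $\Phi_{X,\E}$ is pseudonatural in both variables and that the resulting unit and counit satisfy the triangle identities coherently (the swallowtail equations), since the adjunction is only pseudo; everything else reduces to \Cref{Thm:thetrueone}, \Cref{lem:ultrasheaf-exactness}, and standard idempotent-adjunction formalism, with no new conceptual input beyond the earlier sections.
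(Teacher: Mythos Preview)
Your proposal is correct and arrives at the same conclusion by the same substantive input (\Cref{Thm:thetrueone} and \Cref{lem:ultrasheaf-exactness}), but the organization differs from the paper's. The paper takes the shorter route: it writes down the unit $\eta_X : X \to \vpt(\vSh(X))$ explicitly (sending $a$ to $\ev_a$ and an ultraarrow $f$ to the natural transformation $A\mapsto A(f)$), declares that $\eta$ and $\epsilon$ are strict 2-natural and that the triangle identities hold strictly, and then proves pseudoidempotency exactly as you do --- $\vSh(X)$ has enough points, so \Cref{Thm:thetrueone} makes $\epsilon_{\vSh(X)}$ an equivalence. Your approach instead packages the adjunction through the hom-equivalence $\Phi_{X,\E}$, which is the ``$\Set$ as dualizing object'' formulation the paper advertises in the introduction but never spells out; this buys you a cleaner conceptual picture and an explicit reason why the unit and counit take the form they do, at the cost of the pseudonaturality and swallowtail bookkeeping you flag at the end. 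The paper's direct unit/counit construction sidesteps that bookkeeping (and even gets strict triangle identities rather than coherent ones), so it is more economical, but your route is equally valid and arguably more illuminating.
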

\begin{proof}
    From a bounded v-ultracategory $X$, there is a natural functor $\eta_X : X \to \vpt(\vSh(X))$ mapping an object $a$ of $X$ to the evaluation $\ev_a : \vSh(X)\to\Set$ and an ultraarrow $f : a\ultrato (b_s)_{s:\mu}$ to the natural transformation whose component at $A\in\vSh(X)$ is given by $A(f) : A(a)\to\int_{s:\mu}A(b_s)$.
    
    One can then check that $\eta$ and $\epsilon$ induce strict 2-natural transformations and that the two triangle identities are satisfied strictly.

    For the pseudoidempotency: if $X$ is a bounded v-ultracategory, then $\vSh(X)$ has always enough points, so \Cref{Thm:thetrueone} ensures that $\ev : \vSh(X) \to \vSh(\vpt(\vSh(X)))$ is an equivalence, and so $\epsilon_{\vSh(X)}$ is an equivalence.\qedhere
\end{proof}

\begin{rmk}
    Our proof makes use of the axiom of choice, for example when dealing with the indexings. However, a significant part of the theory relies only on the ultrafilter principle; it is thus natural to wonder whether our reconstruction result really needs choice or if the ultrafilter principle could be enough.
\end{rmk}

\begin{rmk}
    This reflection of topoi with enough points inside bounded virtual ultracategories might be helpful to compute (co)limits of topoi with enough points. For example, together with \Cref{lem:T1ample}, it implies that $\Sh(T_{\bullet}^{\amp})$ of \Cref{cons:T1} is indeed the groupoid kernel of $\Sh(T_0^{\amp})\to\E$ in the 2-category of topoi with enough points; in other terms, the groupoid of amply indexed models is ``étale-complete'' for a topoi with enough points analogous version of étale-completeness \cite{Moerdijk}; and this can be easily adapted for other groupoids of indexed models.
\end{rmk}

\section*{Conclusion}

Let us take a look at what we have accomplished. We defined a new notion categorifying topological spaces that we named ``virtual ultracategories''. We then show that any topos possessed such a structure on its points; if the topos is localic (resp. coherent), we recover the known notion of relational $\beta$-module (resp. ultracategory). We then proved a reconstruction result: \emph{a topos with enough points can be reconstructed by taking ultrasheaves on its virtual ultracategory of points}. This result categorifies the known fact that a topological space can be recovered from its relational $\beta$-module of points, and generalizes conceptual completeness (a coherent topos can be recovered from its ultracategory of points).

Our proof makes use of representations of topos by topological groupoids; the two technical ingredients of our proof are given by the 0-dimensional case (\Cref{Thm:étale-ucvg}) and a descent theorem for virtual ultracategories (\Cref{prop:desc-vUlt}). We expect one could also have adapted the proof of Makkai \cite{MakkaiStone} or the one of Lurie \cite{Lurie} to this setting.

In this paper, we focused on proving the reconstruction theorem as we deemed it a clear and strong motivation for the introduction of virtual ultracategories. Of course, this new notion of categorified spaces opens the door to new directions of research, a few of which are listed below, that we leave to future work.

\begin{enumerate}
    \item How do virtual ultracategories relate to ionads? Ionads are another categorification of topological spaces introduced by Garner in \cite{IonadGarner} and revisited by Di Liberti in \cite{IonadIvan}.
    In particular we would like to compare our adjunction of \Cref{cor:final-adj} with Di Liberti's adjunction \cite[Theorem 3.2.11]{IonadIvan}.
    \item In the same way that ultracategories can be seen as stacks over the category of compact Hausdorff spaces (\cite[Section 4]{Lurie}), we expect that virtual ultracategories can be seen as stacks over $\TopSp$. This is related to the notion of \emph{profile} introduced in \cite[Section 3.2]{Ivan}.
    \item When is the virtual ultracategory of points of a topos $\E$ is actually an ultracategory? In the 0-dimensional case, topological spaces whose v-ultracategory of points is an ultracategory are exactly the \emph{strongly sober} spaces, a notion which appears in the domain theory literature (\cite[Definition VI-6.12]{StronglySober}). Could this help characterize coherent topoi among topoi with enough points? 
\end{enumerate}

\printbibliography

\end{document}